\numberwithin{equation}{section}
\theoremstyle{definition}
\theoremstyle{theorem}
\newtheorem{theoremalpha}{Theorem}
\theoremstyle{corollary}
\newtheorem{theorem}{Theorem}[section]
\newtheorem{main theorem}{Main Theorem}
\newtheorem{proposition}[theorem]{Proposition}
\newtheorem{corollary}[theorem]{Corollary}
\newtheorem{lemma}[theorem]{Lemma}
\newtheorem{theorem*}{Theorem}
\newtheorem{corollary*}[theorem*]{Corollary}
\newtheorem{conjecture*}[theorem*]{Conjecture}
\theoremstyle{definition}
\newtheorem{example}[theorem]{Example}
\newtheorem{definition}[theorem]{Definition}
\newtheorem{definition-lemma}[theorem]{Definition-Lemma}
\newtheorem{remark}[theorem]{Remark}
\newcommand\R{\mathbb{R}}
\newcommand\Q{\mathbb{Q}}
\newcommand\Z{\mathbb{Z}}
\renewcommand\P{\mathbb{P}}
\newcommand\eps{\varepsilon}
\renewcommand\epsilon{\varepsilon}
\newcommand{\mc}{\mathcal}
\newcommand{\ol}[1]{\overline{#1}}
\DeclareMathOperator{\ord}{ord}
\DeclareMathOperator{\mult}{mult}
\DeclareMathOperator{\Supp}{Supp}
\DeclareMathOperator{\Cent}{Cent}
\DeclareMathOperator{\vol}{vol}
\DeclareMathOperator{\val}{val}
\DeclareMathOperator{\Eff}{Eff}
\DeclareMathOperator{\Pic}{Pic}
\DeclareMathOperator{\SB}{SB}
\newcommand{\bm}{\mathbf B_-}  
\newcommand{\bp}{\mathbf B_+}  
\newcommand{\okbd}{\Delta}
\newcommand{\okval}{\Delta^{\val}}
\newcommand{\oklim}{\Delta^{\lim}}
\newcommand{\oknum}{\Delta^{\text{num}}}
\def\Cent{\operatorname{Cent}}
\def\BDPP{\operatorname{BDPP}}
\begin{document}

\title[Okounkov bodies associated to abundant divisors and Iitaka fibrations]{Okounkov bodies associated to abundant divisors \\and Iitaka fibrations}

\author{Sung Rak Choi}
\address{Department of Mathematics, Yonsei University, Seoul, Korea}
\email{sungrakc@yonsei.ac.kr}

\author{Jinhyung Park}
\address{Department of Mathematics, Sogang University, Seoul, Korea}
\email{parkjh13@sogang.ac.kr}

\author{Joonyeong Won}
\address{Center for Mathematical Challenges, Korea Institute for Advanced Study, Seoul, Korea}
\email{leonwon@kias.re.kr}

\date{\today}
\keywords{Okounkov body, abundant divisor, Fujita's approximation, Iitaka fibration}

\begin{abstract}
The aim of this paper is to study the Okounkov bodies associated to abundant divisors. As a main result, we prove that the valuative Okounkov bodies of an abundant divisor encode all the numerical properties. We apply this result to recover the asymptotic base loci of an abundant divisor from the valuative Okounkov bodies. We also give a criterion of when the valuative and limiting Okounkov bodies of an abundant divisor coincide by comparing their Euclidean volumes. To obtain these results, we prove some variants of Fujita's approximations for Okounkov bodies using Iitaka fibrations.
\end{abstract}

\maketitle


\section{Introduction}

Inspired by the work of Okounkov \cite{O1, O2}, Lazarsfeld--Musta\c{t}\u{a} \cite{lm-nobody} and Kaveh--Khovanskii \cite{KK} independently introduced and studied the convex sets called the Okounkov bodies associated to big divisors. Following their philosophy, there has been a number of attempts to understand the various asymptotic properties of divisors by analyzing the structure of the Okounkov bodies.
The details are as follows. We first let $X$ be a smooth projective variety of dimension $n$. For a divisor $D$ on $X$, the \emph{Okounkov body} $\okbd_{Y_\bullet}(D)$ is defined as a convex set in $\mathbb R^n$ which clearly depends on $D$ and also on the choice of the admissible flag $Y_\bullet$ (see Definition \ref{def-okbd}). It is expected that one can extract various positivity properties of the divisor $D$ from the structure of the Okounkov bodies. Based on the results on the Okounkov bodies of big divisors \cite{lm-nobody}, we extended in \cite{CHPW1, CPW1, CPW2} the study of Okounkov bodies to pseudoeffective divisors by introducing the \emph{valuative Okounkov body}  $\okval_{Y_\bullet}(D)$ and the \emph{limiting Okounkov body} $\oklim_{Y_\bullet}(D)$ of a pseudoeffective divisor $D$ (see Definition \ref{def-vallim}). By definition, $\okval_{Y_\bullet}(D)\subseteq\oklim_{Y_\bullet}(D)$ holds in general and $\okbd_{Y_\bullet}(D)=\okval_{Y_\bullet}(D)=\oklim_{Y_\bullet}(D)$ when $D$ is a big divisor. See Subsection \ref{okbdsubsec} for more details.

\medskip

By \cite[Proposition 4.1 (i)]{lm-nobody} and \cite[Theorem A]{Jow}, it is known that the Okounkov bodies are numerical in nature, i.e., two big divisors $D, D'$ on a smooth projective variety $X$ are numerically equivalent if and only if $\okbd_{Y_\bullet}(D)=\okbd_{Y_\bullet}(D')$ for every admissible flag $Y_\bullet$ on $X$. This statement was extended to pseudoeffective divisors using the limiting Okounkov bodies in \cite[Theorem C]{CHPW1}. Thus theoretically one could read off all the numerical information of a given pseudoeffective divisor from its limiting Okounkov bodies. In contrasts, the valuative Okounkov bodies do not reflect the numerical properties of divisors in full as we observed in \cite[Remark 3.13]{CHPW1}.

\medskip

The first aim of the paper is to show that as is often the case, imposing the ``abundance condition'' on divisors turns the valuative Okounkov bodies into numerical objects. In this paper, following \cite{BDPP}, \cite{CP}, we say that a divisor $D$ is \emph{abundant} if $\kappa(D)=\nu_{\BDPP}(D)$ holds. Since $\kappa(D)\leq\nu_{\BDPP}(D)\leq \kappa_{\sigma}(D) \leq \kappa_{\nu}(D)$ holds in general, our definition is weaker than the classical abundance which requires $\kappa(D)=\kappa_{\sigma}(D)$ or $\kappa(D)=\kappa_{\nu}(D)$. We refer to Subsection \ref{iitakadimsubsec} for the definitions of numerical Iitaka dimensions $\nu_{\BDPP}(D)$, $\kappa_{\sigma}(D)$, $\kappa_{\nu}(D)$ and to Subsection \ref{absubsec} for abundant divisors.

\medskip

The following theorem is an extension of \cite[Proposition 4.1 (i)]{lm-nobody} and \cite[Theorem A]{Jow} to valuative Okounkov bodies of abundant divisors.

\begin{theoremalpha}[=Corollary \ref{numeq=same}]\label{main1}
Let $D, D'$ be pseudoeffective abundant $\R$-divisors on a smooth projective variety $X$. Then we have:
$$
D \equiv D' ~~\text{ if and only if }~~ \okval_{Y_\bullet}(D)=\okval_{Y_\bullet}(D')~\text{ for every admissible flag $Y_\bullet$ on $X$}.
$$
\end{theoremalpha}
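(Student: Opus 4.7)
The plan is to deduce both directions from the analogous result for limiting Okounkov bodies, namely that $D \equiv D'$ if and only if $\oklim_{Y_\bullet}(D)=\oklim_{Y_\bullet}(D')$ for all admissible flags, which is \cite[Theorem C]{CHPW1}. The bridge between the valuative and limiting settings, and the point where the abundance hypothesis enters decisively, is the identification
\[
\okval_{Y_\bullet}(D) \;=\; \oklim_{Y_\bullet}(D)
\]
for every pseudoeffective abundant $\R$-divisor $D$ and every admissible flag $Y_\bullet$. Granting this identification, the theorem is immediate: $(\Rightarrow)$ follows because limiting bodies are numerical in nature by \cite{CHPW1}, and $(\Leftarrow)$ follows by applying \cite[Theorem C]{CHPW1} to the equal limiting bodies.

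\textbf{The key identification.} To establish $\okval_{Y_\bullet}(D)=\oklim_{Y_\bullet}(D)$ for abundant $D$, I would develop a Fujita-type approximation for valuative Okounkov bodies and combine it with the structure of the Iitaka fibration $\phi\colon X'\to Z$ of $D$ on a sufficiently high birational model. Abundance forces $\kappa(D)=\kappanu(D)$, so after removing the negative part $N_\sigma(D)$ of Nakayama's $\sigma$-decomposition, the remaining positive part is, up to numerical equivalence, the pullback of a big divisor on $Z$. Choosing a Nakayama subvariety $V\subseteq X'$ of dimension $\kappanu(D)$ on which $D|_V$ is big allows one to control the asymptotic growth of sections of $mD$ by the already understood Okounkov bodies of $D|_V$. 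Approximating $D$ by big divisors $D_m$ compatible with this structure, and for which $\okbd_{Y_\bullet}(D_m)=\okval_{Y_\bullet}(D_m)=\oklim_{Y_\bullet}(D_m)$ coincide trivially, the equality then passes to the limit in the Hausdorff sense.

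\textbf{Conclusion and main obstacle.} With the identification in hand, the main theorem follows at once: numerical equivalence gives equality of limiting bodies, hence of valuative bodies; conversely, equality of valuative bodies for every flag gives equality of limiting bodies for every flag, and \cite[Theorem C]{CHPW1} produces $D\equiv D'$. The main obstacle is precisely the identification $\okval_{Y_\bullet}(D)=\oklim_{Y_\bullet}(D)$ in the abundant case for \emph{every} admissible flag, not just those adapted to a Nakayama subvariety. Valuative bodies, unlike limiting ones, are generally not numerical and are sensitive both to negative parts in the $\sigma$-decomposition and to the birational model used to realize the Iitaka fibration. Consistent with the volume-based criterion mentioned in the abstract, the expected strategy is to show that the two bodies have the same Euclidean volume in the abundant case, both equal to a quantity computable from the restricted volume along a Nakayama subvariety, and then invoke the containment $\okval_{Y_\bullet}(D)\subseteq\oklim_{Y_\bullet}(D)$ together with convexity to upgrade equal volume to equal bodies. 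Proving this volume equality uniformly over all admissible flags is the delicate step where the Fujita-type approximation must be carried out model-independently.
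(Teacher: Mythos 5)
Your proposal hinges on a claim that is not only unproven but is actually false, and the paper itself says so explicitly. You propose to establish, for every pseudoeffective abundant $\R$-divisor $D$ and every admissible flag $Y_\bullet$, the identification $\okval_{Y_\bullet}(D)=\oklim_{Y_\bullet}(D)$, and then deduce the theorem from the numerical invariance of limiting Okounkov bodies (\cite[Theorem C]{CHPW1}). But Theorem \ref{comvallim} of this paper shows that for an abundant $D$ with $\kappa(D)>0$ and an admissible flag $Y_\bullet$ with $Y_{n-\kappa(D)}$ a Nakayama subvariety and $Y_n$ general, one has
\[
\vol_{\R^{\kappa(D)}}\bigl(\oklim_{Y_\bullet}(D)\bigr)=\deg\bigl(\phi|_{V'}\colon V'\to Z\bigr)\cdot\vol_{\R^{\kappa(D)}}\bigl(\okval_{Y_\bullet}(D)\bigr),
\]
so the two bodies coincide precisely when $\phi|_{V'}$ is generically injective. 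Example \ref{nonexist} then exhibits a minimal surface $S$ with $\kappa(S)=1$ and semi-ample (hence abundant) canonical divisor $K_S$, whose elliptic Iitaka fibration has no section; for such $S$, the valuative and limiting bodies of $K_S$ are \emph{different} for every admissible flag containing a Nakayama subvariety. So abundance does not make the two bodies coincide, and your reduction to \cite[Theorem C]{CHPW1} collapses. You appear to have misread the abstract: it announces a \emph{criterion} for when the two bodies coincide, not a theorem that they always do in the abundant case.

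The paper's actual argument avoids the limiting body entirely and works directly with the valuative one. For the ``only if'' direction (Proposition \ref{numeq=>same}), it uses Lehmann's structure theorem for abundant divisors (Theorem \ref{abprop}(3)) to realize $P_\sigma(\mu^*D)$ as $P_\sigma(g^*B)$ for a big divisor $B$ on the base $T$ of a birational model of the Iitaka fibration; numerical equivalence of $D$ and $D'$ then forces $P_\sigma(\mu^*D)\sim_\Q P_\sigma(\mu^*D')+g^*N$ for a numerically trivial $N$ on $T$, and a subadditivity argument involving the Okounkov body of the semi-ample divisor $g^*(A+(m+a)N)$ (which contains the origin) gives both inclusions in the limit $m\to\infty$. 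For the ``if'' direction (Proposition \ref{same=>numeq}, which needs no abundance), the paper first extracts the Iitaka fibration from the valuative bodies (Theorem \ref{sameiitaka}), then recovers $N_s$ from the bodies (Lemma \ref{negpartviaval}), and finally recovers the intersection numbers $P_s\cdot C_i$ with Jow's basis of very general complete-intersection curves via Lemma \ref{curveintersection}, concluding $P_s\equiv P_s'$. If you want to salvage your proposal, you would need an entirely different bridge between the valuative body and numerical data; the degree factor in Theorem \ref{comvallim} and the role of the Iitaka fibration are exactly the phenomena your identification ignores.
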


We remark that the `only if' direction of Theorem \ref{main1} does not hold when $D, D'$ are not abundant. It is because $\dim \okval_{Y_\bullet}(D'')=\kappa(D'')$ holds for any divisor $D''$ while we may possibly have $\kappa(D) \neq \kappa(D')$ even when $D \equiv D'$ (see \cite[Remark 3.13]{CHPW1}). However, the `if' direction of Theorem \ref{main1} holds without the abundance assumption on $D, D'$ (see Proposition \ref{same=>numeq}). As a consequence, we will also show in Corollary \ref{lineq=same} that if $\Pic(X)$ is finitely generated, then for any divisors $D, D'$ with $\kappa(D), \kappa(D') \geq 0$, we have:
\emph{
$$
D \sim_{\R} D' ~~\text{ if and only if }~~ \okval_{Y_\bullet}(D)=\okval_{Y_\bullet}(D')~\text{ for every admissible flag $Y_\bullet$ on $X$}.
$$}\\[-9pt]
\indent It is natural to ask how to extract the numerical properties of abundant divisors from the valuative Okounkov bodies. To give a partial answer to this question, we study the restricted base locus $\bm(D)$ (see Subsection \ref{asysubsec} for the definition) of an abundant divisor $D$ using the valuative Okounkov bodies. The analogous results of the following theorem for limiting Okounkov bodies was obtained in \cite[Theorem A]{CHPW2} (see also \cite{AV-loc pos2, AV-loc pos3, AV-loc pos}).

\begin{theoremalpha}[=Theorem \ref{b-thm}]\label{main2}
Let $D$ be a pseudoeffective abundant $\R$-divisor on a smooth projective variety $X$ of dimension $n$, and $x \in X$ be a point. Then the following are equivalent:
\begin{enumerate} [leftmargin=0cm,itemindent=.6cm]
 \item[$(1)$] $x \in \bm(D)$.
 \item[$(2)$] $\okval_{Y_\bullet}(D)$ does not contain the origin of $\R^n$ for every admissible flag $Y_\bullet$ on $X$ centered at $x$.
  \item[$(3)$] $\okval_{Y_\bullet}(D)$ does not contain the origin of $\R^n$ for some admissible flag $Y_\bullet$ on $X$ centered at $x$.
\end{enumerate}
\end{theoremalpha}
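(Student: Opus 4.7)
The plan is to establish the cycle $(1) \Rightarrow (3) \Rightarrow (2) \Rightarrow (1)$. The implication $(3) \Rightarrow (2)$ is tautological (a condition holding for every admissible flag in particular holds for some), so the real content lies in $(1) \Rightarrow (3)$ and $(2) \Rightarrow (1)$. Abundance is used only for the latter.

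For $(1) \Rightarrow (3)$, suppose $x \in \bm(D)$. The analogous result for the limiting Okounkov body, \cite[Theorem A]{CHPW2}, yields $0 \notin \oklim_{Y_\bullet}(D)$ for every admissible flag $Y_\bullet$ centered at $x$, and that statement is established there without any abundance hypothesis. Combining this with the general containment $\okval_{Y_\bullet}(D) \subseteq \oklim_{Y_\bullet}(D)$ noted in the introduction immediately gives $0 \notin \okval_{Y_\bullet}(D)$ for every such flag.

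For $(2) \Rightarrow (1)$ we argue the contrapositive: assume $x \notin \bm(D)$ and fix an arbitrary admissible flag $Y_\bullet$ centered at $x$; we must show that $0 \in \okval_{Y_\bullet}(D)$, i.e., that the origin is a limit point of $\{\frac{1}{m}\val_{Y_\bullet}(s) : s \in H^0(X, \lceil mD \rceil),\, m \geq 1\}$. The idea is to reduce to the case of a big divisor via the Iitaka fibration of $D$. Because $D$ is abundant, after a birational model $\mu: X' \to X$ that we may take to be an isomorphism near $x$ (using $x \notin \bm(D)$), the pullback $\mu^*D$ admits an Iitaka-type decomposition $\mu^*D \sim_{\R} \phi^*B + E$, where $\phi: X' \to Z$ is the Iitaka fibration, $B$ is a big $\R$-divisor on $Z$, and $E$ is effective with $\Supp(E) \subseteq \bm(\mu^*D)$. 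Then $z := \phi(x)$ lies outside $\bm(B)$ and $\Supp(E)$ is disjoint from $\mu^{-1}(x)$. Since $B$ is big, by standard results on big divisors one produces sections $t_m \in H^0(Z, \lceil mB \rceil)$ nonvanishing at $z$ whose order of vanishing along the flag, divided by $m$, tends to $0$. Pulling back via $\phi$ and multiplying by a fixed section cutting out $mE$ yields sections $s_m \in H^0(X, \lceil mD \rceil)$ with $\val_{Y_\bullet}(s_m)$ bounded independently of $m$; dividing by $m$ exhibits $0$ as a limit point of $\okval_{Y_\bullet}(D)$.

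The principal obstacle is controlling the effective correction term $E$ along the arbitrary flag $Y_\bullet$: a priori, $E$ could force unavoidable linear growth in $\val_{Y_\bullet}(s_m)$. Abundance is used precisely here to supply an Iitaka decomposition in which $\Supp(E) \subseteq \bm(\mu^*D)$, so $E$ avoids $\mu^{-1}(x)$ and its contribution to the valuation is a constant negligible after division by $m$. The Fujita-type approximation results for Okounkov bodies developed earlier in the paper, together with properties of Nakayama subvarieties of abundant divisors, are the formal tools making the reduction from $D$ on $X$ to the big divisor $B$ on $Z$ rigorous. Without abundance no such decomposition is available, matching the known failure of the analogous statement for valuative Okounkov bodies of general pseudoeffective divisors.
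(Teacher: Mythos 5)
Your implication structure is correct and your shortcut for $(1) \Rightarrow (3)$ is valid: citing \cite[Theorem A]{CHPW2} for the limiting Okounkov body and then using $\okval_{Y_\bullet}(D) \subseteq \oklim_{Y_\bullet}(D)$ indeed gives $(1) \Rightarrow (3)$ with no abundance hypothesis, and is cleaner than what the paper actually does. The paper instead runs a direct valuation-theoretic computation (dividing into the cases $Y_1 \subseteq \bm(D)$ and $Y_1 \not\subseteq \bm(D)$ and bounding the coordinate $\nu_{k+1}(D')$ below by a multiple of $\ord_{Y_{k+1}}(\|D\|)$), using Theorem \ref{abprop}(2) along the way. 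Your route is a genuine, and shorter, alternative for this implication.

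For the key implication $(2) \Rightarrow (1)$ (which you correctly argue via the contrapositive), your Iitaka-fibration approach diverges from the paper's and has a real gap. The paper's argument is strikingly short: since $D$ is abundant, Theorem \ref{abprop}(2) gives $\ord_V(\|D\|) = \inf\{\sigma(D') : D' \in |D|_\R\}$ for divisorial valuations, and since $x \notin \bm(D)$ implies $Y_i \not\subseteq \bm(D)$ for all $i$, one can directly produce $D' \in |D|_\R$ with all the flag-orders $\ord_{Y_i}(D') < \eps$, placing the origin in $\okval_{Y_\bullet}(D)$. Your alternative --- pulling back sections $t_m$ from a big divisor $B$ on the Iitaka base $Z$ and correcting by a section cutting out $mE$ --- does not obviously bound $\nu_{Y_\bullet}(s_m)$ for an \emph{arbitrary} admissible flag $Y_\bullet$ centered at $x$. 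The flag $Y_\bullet$ is in no way adapted to $\phi$: its intermediate members $Y_i$ need not map to a flag on $Z$, and in particular $Y_{n-\kappa(D)}$ is not assumed to be a Nakayama subvariety, so the relationship between $\nu_{Y_\bullet}(\phi^*t_m)$ on $X$ and the valuation of $t_m$ on $Z$ is uncontrolled. You assert that "$\val_{Y_\bullet}(s_m)$ is bounded independently of $m$," but this is precisely the step requiring justification, and the machinery you invoke (Fujita approximations, Nakayama subvarieties) in the paper is only set up for flags containing a Nakayama subvariety. You should instead appeal directly to Theorem \ref{abprop}(2) as the paper does, which sidesteps the Iitaka fibration entirely for this direction.
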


As we observed in \cite[Remark 4.10]{CPW1}, without the abundance condition, $\okval_{Y_\bullet}(D)$ may not contain the origin of $\R^n$ for some admissible flag $Y_\bullet$ even if $D$ is nef. Note that the analogous statements concerning $\bp(D)$ as in \cite[Theorem C]{CHPW2} for an abundant divisor $D$ easily follow from \cite[Theorem 6.5]{CHPW2} since big divisors are abundant and $\bp(D)=X$ holds if $D$ is not big.

\medskip

In \cite{CHPW1}, we have seen that the Okounkov bodies $\okval_{Y_\bullet}(D)$ and $\oklim_{Y_\bullet}(D)$ encode a good amount of asymptotic properties of the divisor $D$ if the given admissible flag $Y_\bullet$ contains a Nakayama subvariety or a positive volume subvariety of $D$, respectively (see Subsection \ref{okbdsubsec} for the definitions of these special subvarieties). For example, we have $\dim \okval_{Y_\bullet}(D) = \kappa(D)$ and $\dim \oklim_{Y_\bullet}(D)=\nu_{\BDPP}(D)$ for such admissible flags $Y_\bullet$.
Thus for the two Okounkov bodies $\oklim_{Y_\bullet}(D)$ and $\okval_{Y_\bullet}(D)$ to coincide with each other, it is necessary to assume that $\kappa(D)=\nu_{\BDPP}(D)$, i.e., $D$ is an abundant divisor. In this case, we show in Proposition \ref{nak=psv}  that a subvariety is a Nakayama subvariety of $D$ if and only if it is a positive volume subvariety of $D$.
However, even under the abundance condition, the inclusion $\okval_{Y_\bullet}(D)\subseteq\oklim_{Y_\bullet}(D)$ can be strict as was noticed in \cite[Example 4.2]{CHPW1}. By comparing the Euclidean volumes of the Okounkov bodies $\okval_{Y_\bullet}(D)$ and $\oklim_{Y_\bullet}(D)$, we obtain a criterion for the equality of these bodies.

\begin{theoremalpha}[=Theorem \ref{comvallim}]\label{main3}
Let $D$ be a pseudoeffective abundant $\R$-divisor on an $n$-dimensional smooth projective variety $X$ with $\kappa(D)>0$. Fix an admissible flag $Y_\bullet$ on $X$ such that $V=Y_{n-\kappa(D)}$ is a Nakayama subvariety of $D$ and $Y_n$ is a general point in $X$.
Consider the Iitaka fibration $\phi \colon X' \to Z$ of $D$ and the strict transform $V'$ of $V$ on $X'$. Then we have
$$
\vol_{\R^{\kappa(D)}}(\oklim_{Y_\bullet}(D))=\deg(\phi|_{V'} \colon V' \to Z) \cdot \vol_{\R^{\kappa(D)}}(\okval_{Y_\bullet}(D)).
$$
In particular, $\okval_{Y_\bullet}(D)=\oklim_{Y_\bullet}(D)$ if and only if the map $\phi|_{V'} \colon V' \to Z$ is generically injective.
\end{theoremalpha}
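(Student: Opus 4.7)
The plan is to compute $\vol_{\R^{\kappa(D)}}(\oklim_{Y_\bullet}(D))$ and $\vol_{\R^{\kappa(D)}}(\okval_{Y_\bullet}(D))$ separately in terms of the volume of a big divisor on $Z$, and then to compare them. Fix a smooth birational model $\pi \colon X' \to X$ on which the Iitaka fibration $\phi \colon X' \to Z$ is a morphism. Since $D$ is abundant, I would write $\pi^* D \sim_{\R} \phi^* H + E$ with $H$ a big $\R$-divisor on $Z$ and $E$ an effective divisor capturing the $N_\sigma$-part of $\pi^* D$. By Proposition \ref{nak=psv}, $V'$ is simultaneously a Nakayama and a positive volume subvariety of $\pi^* D$, and $\phi|_{V'} \colon V' \to Z$ is generically finite of degree $d := \deg(\phi|_{V'})$.

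For the limiting body, I would invoke the positive-volume-subvariety interpretation from \cite{CHPW1}: since $V$ is a positive volume subvariety of $D$ and the flag ends at a general point, $\oklim_{Y_\bullet}(D)$ is supported in the slice $\{0\}^{n-\kappa(D)} \times \R^{\kappa(D)}$ and its Euclidean volume equals $\tfrac{1}{\kappa(D)!}\vol_{X|V}(D)$. Transporting the restricted volume to $X'$, the fact that $V'$ is a Nakayama subvariety implies $E|_{V'}$ lies in the $N_\sigma$-part of $\pi^*D|_{V'}$ and hence does not contribute, so
\[
\vol_{X|V}(D) \;=\; \vol_{V'}\bigl((\phi|_{V'})^* H\bigr) \;=\; d \cdot \vol_Z(H)
\]
by the projection formula for the generically finite morphism $\phi|_{V'}$.

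For the valuative body, I would combine the Nakayama-subvariety description of $\okval$ from \cite{CHPW1} with the Fujita-type approximation for abundant divisors established earlier in this paper. The point is that for an abundant divisor, the sections of $mD$ on $X$ are, via $\phi \circ \pi^{-1}$, pulled back from sections of $mH$ on $Z$, so that $R(X, D)$ is identified with $R(Z, H)$ up to finite-index corrections that do not affect Okounkov-body volumes. Counting through an admissible flag whose truncation in $V$ ends at a general point therefore yields
\[
\vol_{\R^{\kappa(D)}}\bigl(\okval_{Y_\bullet}(D)\bigr) \;=\; \tfrac{1}{\kappa(D)!}\vol_Z(H).
\]
Combining the two displayed formulas gives the main identity. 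The final assertion then follows from $\okval_{Y_\bullet}(D) \subseteq \oklim_{Y_\bullet}(D)$, since two nested convex bodies of the same Euclidean volume in $\R^{\kappa(D)}$ must coincide.

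The main obstacle will be the second volume computation: showing that $\vol_{\R^{\kappa(D)}}(\okval_{Y_\bullet}(D)) = \vol_Z(H)/\kappa(D)!$ with no factor of $d$. The delicate point is that the flag lives in $X$ and not in $Z$, so one must argue that the valuative body on the slice receives only one contribution per point of $Z$, in contrast with the limiting body where the degree-$d$ cover $\phi|_{V'}$ inflates the count. This asymmetry is what ultimately forces $d$ to enter multiplicatively on the limiting side only, and making it rigorous relies on the specifically abundant structure of $D$ rather than on general Nakayama-subvariety arguments.
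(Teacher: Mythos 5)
Your proposal contains a genuine conceptual error that breaks the argument. When you invoke the positive-volume-subvariety interpretation for $\oklim_{Y_\bullet}(D)$, the correct formula (Theorem \ref{chpwmain}~(2)) reads
\[
\vol_{\R^{\kappa(D)}}(\oklim_{Y_\bullet}(D)) = \tfrac{1}{\kappa(D)!}\,\vol^{+}_{X|V}(D),
\]
i.e.\ the \emph{augmented} restricted volume, whereas for the valuative body (Theorem \ref{chpwmain}~(1)) one has the ordinary restricted volume $\vol_{X|V}(D)$. You wrote $\vol_{X|V}(D)$ for the limiting body and then asserted $\vol_{X|V}(D) = \vol_{V'}((\phi|_{V'})^{*}H) = d\cdot\vol_Z(H)$, but this is false: since every section of $mD$ on $X$ is pulled back from $Z$ through the Iitaka fibration, the image of $H^0(X,\lfloor mD\rfloor) \to H^0(V,\lfloor mD\rfloor|_V)$ is identified (via $\phi|_{V'}$) with the image of $H^0(Z,\lfloor mH\rfloor)$, so $\vol_{X|V}(D)$ should equal $\vol_Z(H)$, \emph{not} $d\cdot\vol_Z(H)$. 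The factor $d$ enters only through the augmented restricted volume $\vol^{+}_{X|V}(D)$, where the $\epsilon A$-perturbation makes sections available on $V'$ beyond those pulled back from $Z$, so that asymptotically one sees the full volume $\vol_{V'}((\phi|_{V'})^{*}H) = d\cdot\vol_Z(H)$. In other words, the asymmetry you flag at the end of your proposal is real, but you have mislocated it: it lives in the distinction between $\vol$ and $\vol^{+}$, not in some "one contribution per point" argument on the valuative side, and your displayed identity for $\vol_{X|V}(D)$ is exactly what would make the theorem false if it held.

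A second, more structural gap is that the actual quantitative comparison requires the Fujita-type approximation machinery of Section~\ref{altsec}. The paper reduces both Okounkov-body volumes to limits over base-point-free approximants: $\vol(\okval_{Y_\bullet}(D)) = \lim_m \tfrac{1}{\kappa(D)!}\vol_{Z_m}(H_m)$ via Lemma~\ref{altconval}, and $\vol(\oklim_{Y_\bullet}(D)) = \lim_m \tfrac{1}{\kappa(D)!}\vol_{V_m}(M_m|_{V_m})$ via Lemma~\ref{altconlim}; the factor $d$ then comes out cleanly because $M_m|_{V_m} = (\phi_m|_{V_m})^{*}H_m$ and the projection formula gives $\vol_{V_m}(M_m|_{V_m}) = d\cdot\vol_{Z_m}(H_m)$. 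Your decomposition $\pi^{*}D \sim_{\R} \phi^{*}H + E$ with $E$ ``capturing the $N_\sigma$-part'' is not something one gets for free: Theorem~\ref{abprop}~(3) gives only $P_\sigma(\mu^{*}D) \sim_{\Q} P_\sigma(g^{*}B)$ for a big $B$ on $T$, and $P_\sigma(g^{*}B)$ need not be of the form $g^{*}H$. Making your route rigorous would essentially force you to re-derive Lemmas~\ref{altconval} and~\ref{altconlim}, which are the real content.
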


We remark that even if $D$ is an abundant $\R$-divisor with $\kappa(D)>0$, there may not exist Nakayama subvarieties $V$ giving rise to a generically injective map $\phi|_{V'} \colon V' \to Z$ (see Example \ref{nonexist}). See also Section 4 of \cite{CHPW1} for more related results.

\medskip

To prove all the above theorems, we use the results on Nakayama subvarieties and Iitaka fibrations (see Subsection \ref{absubsec}). Another key ingredients are some versions of Fujita's approximations for the valuative Okounkov bodies $\okval_{Y_\bullet}(D)$ of an effective divisor $D$ (Lemma \ref{altconval}) and for the limiting Okounkov bodies $\oklim_{Y_\bullet}(D)$ of an abundant divisor $D$ (Lemma  \ref{altconlim}). These results may be also regarded as alternative constructions of Okounkov bodies $\okval_{Y_\bullet}(D)$ and $\oklim_{Y_\bullet}(D)$.

\medskip
The organization of the paper is as follows. We begin by collecting relevant basic facts on various asymptotic invariants, Iitaka fibrations, Zariski decompositions, Okounkov bodies, numerical Iitaka dimensions, etc. in Section \ref{prelimsec}. In Section \ref{altsec}, we prepare the main ingredients required for the proofs of Theorems \ref{main1} and \ref{main3}.  Sections \ref{jowsec}, \ref{b-sec}, and \ref{compsec} are devoted to proving Theorems \ref{main1}, \ref{main2}, and \ref{main3}, respectively.

\section{Preliminaries}\label{prelimsec}

In this section, we collect relevant facts which will be used later.

\subsection{Conventions}
Throughout the paper, we work over the field $\mathbb C$ of complex numbers.
By a \emph{$($sub$)$variety}, we mean an irreducible (sub)variety, and $X$ denotes a smooth projective variety of dimension $n$. Unless otherwise stated, a \emph{divisor} means an $\R$-Cartier $\R$-divisor. A divisor $D$ on $X$ is \emph{pseudoeffective} if its numerical class $[D] \in N^1(X)_{\R}$ lies in the pseudoeffective cone $\ol\Eff(X)$, the closure of the cone spanned by effective divisor classes.
A divisor $D$ on $X$ is \emph{big} if $[D]$ lies in the interior $\text{Big}(X)$ of $\ol\Eff(X)$.

\subsection{Asymptotic invariants}\label{asysubsec}
Let $D$ be a divisor on $X$. The \emph{stable base locus} of $D$ is defined as $\SB(D):= \bigcap_{D \sim_{\R} D' \geq 0} \Supp(D')$.
The \emph{augmented base locus} of $D$ is defined as $\bp(D):=\bigcap_A\text{SB}(D-A)$
where the intersection is taken over all ample divisors $A$.
The \emph{restricted base locus} of $D$ is defined as $\bm(D):=\bigcup_{A}\SB(D+A)$
where the union is taken over all ample divisors $A$.
It is well known that  $\bp(D)$ and $\bm(D)$ depend only on the numerical class of $D$. Note that $\bm(D)=X$  (resp. $\bp(D)=X$) if and only if $D$ is not pseudoeffective (resp. not big), and $\bm(D)=\emptyset$ (resp. $\bp(D)=\emptyset$) if and only if $D$ is nef (resp. ample).
For more details, see \cite{elmnp-asymptotic inv of base}.

\medskip

Consider a subvariety $V \subseteq X$ of dimension $v$.
The \emph{restricted volume} of $D$ along $V$ is defined as
$ \displaystyle \vol_{X|V}(D):=\limsup_{m \to \infty} \frac{h^0(X|V,\lfloor mD \rfloor)}{m^v/v!}$
where $h^0(X|V, \lfloor mD \rfloor)$ is the dimension of the image of the natural restriction map $H^0(X, \mc O_X(\lfloor mD \rfloor ))\to H^0(V,\mc O_V(\lfloor mD \rfloor|_V))$.
If $V \not\subseteq \bp(D)$, then the restricted volume $\vol_{X|V}(D)$ depends only on the numerical class of $D$, and it uniquely extends to a continuous function $\vol_{X|V} \colon \text{Big}^V (X) \to \R$ where $\text{Big}^V(X)$ is the set of all $\R$-divisor classes $\xi$ such that $V$ is not properly contained in any irreducible component of $\bp(\xi)$. When $V=X$, we simply let $\vol_X(D):=\vol_{X|X}(D)$, and we call it the \emph{volume} of $D$.
For more details, we refer to \cite[Section 2.2.C]{pos}, \cite{elmnp-restricted vol and base loci}.

\medskip

Now, assume that $V\not\subseteq\bm(D)$.
The \emph{augmented restricted volume} of $D$ along $V$ is defined as
$\vol_{X|V}^+(D):=\lim_{\eps\to 0+} \vol_{X|V}(D+\eps A)$
where $A$ is an ample divisor on $X$. The definition is independent of the choice of $A$.
Note that $\vol_{X|V}^+(D)=\vol_{X|V}(D)$ for $D \in \text{Big}^V (X)$.
This also extends uniquely to a continuous function $\vol_{X|V}^+ \colon \overline{\text{Eff}}^V(X) \to \R$ where $\overline{\text{Eff}}^V(X) := \text{Big}^V(X) \cup \{ \xi \in \overline{\text{Eff}}(X) \setminus \text{Big}(X) \mid V \not\subseteq \bm(\xi)   \}$.
For $D\in \overline{\text{Eff}}^V(X)$, we have $\vol_{X|V}(D) \leq \vol_{X|V}^+(D) \leq \vol_{V}(D|_V)$, and both inequalities can be strict in general.
For more details, see \cite[Subsection 2.3]{CHPW1}.

\subsection{Iitaka fibration}\label{iitakasubsec}
Let $D$ be a divisor on $X$.
The \emph{Iitaka dimension} of $D$ is defined as
$$
\kappa(D):=\max \left\{ k \in \Z_{\geq 0} \left|\; \limsup_{m \to \infty} \frac{h^0(X, \mathcal{O}_X(\lfloor mD \rfloor))}{m^k}>0 \right.\right\}
$$
if  $h^0(X, \mathcal{O}_X(\lfloor mD \rfloor)) \neq 0$ for some $m > 0$, and $\kappa(D):=-\infty$ otherwise.
Note that $\kappa(D)$ is not an invariant of the $\R$-linear equivalence class of $D$.
Nonetheless, it satisfies the property that $\kappa(D)=\kappa(D')$ when $\kappa(D), \kappa(D') \geq 0$ and $D\sim_{\R}D'$ (see \cite[Remark 2.8]{CHPW1}).

\medskip

Now, assume that  $\kappa(D) > 0$. Then there exists a morphism $\phi \colon X' \to Z$ between smooth projective varieties $X', Z$ with connected fibers such that for all sufficiently large integers $m>0$, the rational maps $\phi_{mD} \colon X \dashrightarrow Z_m$ defined by $|\lfloor mD \rfloor|$ are birationally equivalent to $\phi$, i.e., there exists a commutative diagram
\[
\begin{split}
\xymatrix{
 X \ar@{-->}[d]_{\phi_{mD}} & X' \ar[l]_{f} \ar[d]^{\phi} \\
 Z_m & Z \ar[l]^{g_m}
}
\end{split}
\]
of a rational map $\phi_{mD}$ and morphisms $f, \phi, g_m$ with connected fibers, where the horizontal maps $f, g_m$ are birational, $\dim Z = \kappa(D)$, and $\kappa(f^*D|_F)=0$, where $F$ is a very general fiber of $\phi$.
(see e.g., \cite[Theorem 2.1.33]{pos}, \cite[Theorem--Definition II.3.14]{nakayama}).
Such a fibration is called an \emph{Iitaka fibration} of $D$. It is unique up to birational equivalence.

\subsection{Divisorial Zariski decompositions}\label{zdsubsec}
To define the divisorial Zariski decomposition, we first consider a divisorial valuation $\sigma$ on $X$ with the center $V:=\Cent_X \sigma$ on $X$.
If $D$ is a big divisor on $X$, we define \emph{the asymptotic valuation} of $\sigma$ at $D$ as
$\ord_V(||D||):=\inf\{\sigma(D')\mid D\equiv D'\geq 0\}$.
If $D$ is only a pseudoeffective divisor on $X$, we define
$\displaystyle \ord_V(||D||):=\lim_{\epsilon\to 0+}\ord_V(||D+\eps A||)$
for some ample divisor $A$ on $X$. This definition is independent of the choice of $A$. Note that $\ord_V(||D||)$ is a numerical invariant of $D$.
The \emph{divisorial Zariski decomposition} of a pseudoeffective divisor $D$ is the decomposition
$$
D=P_{\sigma}+N_{\sigma} = P_{\sigma}(D) + N_{\sigma}(D)
$$
into the \emph{negative part}  $N_{\sigma}=N_{\sigma}(D):=\sum_{E}  \ord_E(||D||)E$
where the summation is over all the finitely many prime divisors $E$ of $X$ such that $ \ord_E(||D||)>0$ and the \emph{positive part} $P_{\sigma}=P_{\sigma}(D):=D-N_{\sigma}$. The positive part $P_\sigma(D)$ is characterized as the maximal divisor such that $P_\sigma \leq D$ and $P_\sigma(D)$ is movable (see \cite[Proposition III.1.14]{nakayama}). Note that by construction $N_\sigma(D)$ is a numerical invariant of $D$.
For more details, see \cite{B1}, \cite{nakayama}, \cite{P}.

\medskip

Let $D$ be a divisor on $X$ with $\kappa(D) \geq 0$.
The \emph{$s$-decomposition} of $D$ is the decomposition
$$
D=P_s+N_s = P_s(D) + N_s(D)
$$
into the \emph{negative part} $N_s=N_s(D):=\inf\{L \mid L \sim_{\R} D, L \geq 0\}$ and the \emph{positive part} $P_s=P_s(D):=D-N_s$.
The positive part $P_s(D)$ is characterized as the smallest divisor such that $P_s \leq D$ and $R(X, P_s) \simeq R(X, D)$ (see \cite[Proposition 4.8]{P}). Note that $N_s(D)$ is an $\R$-linear equivalence invariant of $D$.
Note that $P_s(D) \leq P_\sigma(D)$ and $P_s(D), P_\sigma(D)$ do not coincide in general. If $D$ is an abundant divisor (see Definition \ref{def-abundant}), then $P_s(D)=P_{\sigma}(D)$ so that $P_s(D), N_s(D)$ become numerical invariants of $D$ (see Theorem \ref{abprop} (2)).
For more details, see \cite{P}.

\subsection{Numerical Iitaka dimensions}\label{iitakadimsubsec}
Let $D$ be a pseudoeffective divisor on $X$. There are several notions of the numerical Iitaka dimensions in the literature defined from different perspectives (see e.g. \cite{BDPP}, \cite{CP}, \cite{E}, \cite{lehmann-nu, lehmann-red}, \cite{lesieutre}, \cite{nakayama}). Among them, the following dimension first introduced by Boucksom--Demailly--P\u{a}un--Peternell \cite{BDPP} is of our most interest:
$$
\nu_{\BDPP}(D):=\max\left\{k\in\mathbb Z_{\geq 0}\left|\langle D^k\rangle\neq 0\right.\right\}.
$$
Here $\langle D^k \rangle$ is the positive intersection product (see \cite[Section 4]{lehmann-nu} for the definition and basic properties). By \cite[Theorem 6.2]{lehmann-nu} (see also \cite[Theorem 1.1]{CP}), we have
$$
\begin{array}{rcl}
\nu_{\BDPP}(D) & = & \max\left\{ \dim W \left| \vol^+_{X|W}(L) > 0\right. \right\}\\
&=&\max \left\{ \dim W \left| \inf\limits_{\phi} \vol_{\widetilde{W}}(P_{\sigma}(\phi^* D)|_{\widetilde{W}}) > 0 \right.\right\}
\end{array}
$$
where $W$ range over all the irreducible subvarieties of $X$ not contained in $\bm(D)$, and $\phi \colon (\widetilde{X}, \widetilde{W}) \to (X, W)$ range over all $W$-birational models, which by definition mean that $W$ is not contained in any $\phi$-exceptional center and $\widetilde{W}$ is the strict transform of $W$ (see \cite[Definition 2.10]{lehmann-nu}). We have $\nu_{\BDPP}(D) \geq 0$ whenever $D$ is pseudoeffective. We put $\nu_{\BDPP}(D) := -\infty$ when $D$ is not pseudoeffective. We will use the following basic properties of $\nu_{\BDPP}(D)$:
\begin{enumerate}
 \item $\kappa(D) \leq \nu_{\BDPP}(D)$.
 \item $\nu_{\BDPP}(D) \leq n$, and $\nu_{\BDPP}(D)=n$ if and only if $D$ is big.
 \item $\nu_{\BDPP}(D)$ is a numerical invariant of $D$, i.e., $\nu_{\BDPP}(D)=\nu_{\BDPP}(D')$ whenever $D \equiv D'$.
 \item $\nu_{\BDPP}(D)=\nu_{\BDPP}(P_{\sigma}(D))$.
\end{enumerate}
We refer to \cite{CP}, \cite{lehmann-nu}, \cite{nakayama} for more detailed properties.

\medskip

We recall some other numerical Iitaka dimensions defined for a pseudoeffective $D$
$$
\arraycolsep=1.4pt\def\arraystretch{1.9}
\begin{array}{rl}
\kappa_{\sigma}(D)&:=\displaystyle  \max \left\{ k \in \Z_{\geq 0} \left| \limsup\limits_{m \to \infty} \frac{h^0(X, \lfloor mD \rfloor + A)}{m^k} > 0 \right.\right\} \\
\kappa_{\nu}(D)&:=\min \{ \dim W \mid \text{$D$ does not numerically dominate $W$}\}\\
\kappa_{\vol}(D)&:=\displaystyle \max \left\{k\in \Z_{\geq 0}\left| \liminf\limits_{\varepsilon \to 0} \frac{\vol_X(D+ \varepsilon A)}{\varepsilon^{n-k}} > 0\right. \right\}
\end{array}
$$
where $A$ is a sufficiently positive ample $\Z$-divisor on $X$. The first two dimensions $\kappa_{\sigma}$ and $\kappa_{\nu}$ were defined by Nakayama \cite[Chapter V]{nakayama} (see \cite[Definition V.2.22]{nakayama} for the definition of numerical dominance), and the third dimension $\kappa_{\vol}$ is defined by Lehmann \cite{lehmann-nu}. They are also numerical invariants of $D$. Note that $\kappa_{\sigma}(D), \kappa_{\vol}(D), \kappa_{\nu}(D)$ are nonnegative integers at most $n=\dim X$ when $D$ is pseudoeffective and $\kappa_{\sigma}(D), \kappa_{\vol}(D), \kappa_{\nu}(D) =n$ if and only if $D$ is big. By \cite[Proposition 3.1]{CP}, we have
$$
\nu_{\BDPP}(D) \leq \kappa_{\sigma}(D) \leq \kappa_{\nu}(D) \;\text{ and }\; \nu_{\BDPP}(D) \leq \kappa_{\vol}(D) \leq \kappa_{\nu}(D).
$$
It is worth noting that these numerical Iitaka dimensions $\kappa_{\sigma}(D), \kappa_{\vol}(D), \kappa_{\nu}(D)$ can be strictly larger than $\nu_{\BDPP}(D)$ (see \cite[Theorem 3]{lesieutre}, \cite[Theorem 1.2]{CP}).

\subsection{Okounkov bodies}\label{okbdsubsec}
Here we recall the construction and basic properties of Okounkov bodies associated to pseudoeffective divisors in \cite{lm-nobody}, \cite{KK}, \cite{CHPW1}. Throughout this subsection, we fix an \emph{admissible flag} $Y_\bullet$ on $X$, which by definition is a sequence of subvarieties
$$
Y_\bullet: X=Y_0\supseteq Y_1\supseteq\cdots \supseteq Y_{n-1}\supseteq Y_n=\{x\}
$$
where each $Y_i$ is an irreducible subvariety of codimension $i$ in $X$ and is nonsingular at $x$.
Let $D$ be a divisor on $X$ with $|D|_{\R}:=\{ D' \mid D \sim_{\R} D' \geq 0 \}\neq\emptyset$.
We define a valuation-like function
$$
\nu_{Y_\bullet} \colon |D|_{\R}\to \R_{\geq0}^n
$$
as follows:
for $D'\in |D|_\R$, let $\nu_1=\nu_1(D'):=\ord_{Y_1}(D')$.
Since $D'-\nu_1(D')Y_1$ is effective and does not contain $Y_2$ in the support, we define $\nu_2=\nu_2(D'):=\ord_{Y_2}((D'-\nu_1Y_1)|_{Y_1})$.
We then inductively define $\nu_{i+1}=\nu_{i+1}(D'):=\ord_{Y_{i+1}}((\cdots((D'-\nu_1Y_1)|_{Y_1}-\nu_2Y_2)|_{Y_2}-\cdots-\nu_iY_i)|_{Y_{i}})$.
Thus we finally obtain
$$
\nu_{Y_\bullet}(D'):=(\nu_1(D'),\nu_2(D'),\ldots,\nu_n(D')) \in \R_{\geq 0}^n
$$

\begin{definition}\label{def-okbd}
When $|D|_{\R} \neq \emptyset$, the \emph{Okounkov body} $\okbd_{Y_\bullet}(D)$ of a divisor $D$ on $X$ with respect to an admdissible flag $Y_\bullet$ on $X$ is defined as the closure of the convex hull of $\nu_{Y_\bullet}(|D|_{\R})$ in $\R^n_{\geq 0}$. When $|D|_{\R} = \emptyset$, we set $\okbd_{Y_\bullet}(D) := \emptyset$.
\end{definition}

More generally, a similar construction can be applied to a graded linear series $W_\bullet$ associated to a $\Z$-divisor on $X$ to construct the Okounkov body $\okbd_{Y_\bullet}(W_\bullet)$ of $W_\bullet$ with respect to $Y_\bullet$. For more details, we refer to \cite{lm-nobody}.

\medskip

In \cite{lm-nobody}, \cite{KK}, the Okounkov bodies $\okbd_{Y_\bullet}(D)$ were mainly studied for big divisors. When $D$ is not big, the following extension was introduced in \cite{CHPW1}.

\begin{definition}[{\cite[Definition 1.1]{CHPW1}}]\label{def-vallim}
\begin{enumerate}[leftmargin=0cm,itemindent=.6cm]
\item[(1)] For a divisor $D$ which is effective up to $\sim_\R$, i.e., $|D|_{\R}\neq \emptyset$, the \emph{valuative Okounkov body} $\okval_{Y_\bullet}(D)$ of $D$ with respect to an admissible flag $Y_\bullet$ is defined as the closure of the convex hull of $\nu_{Y_\bullet}(|D|_{\R})$ in $\R^n_{\geq 0}$.
If $|D|_\R=\emptyset$, then we set $\okval_{Y_\bullet}(D):=\emptyset$.
\item[(2)] For a pseudoeffective divisor $D$, the \emph{limiting Okounkov body} $\oklim_{Y_\bullet}(D)$ of $D$ with respect to an admissible flag $Y_\bullet$ is defined as
$$\oklim_{Y_\bullet}(D):=\lim_{\epsilon \to 0+}\okbd_{Y_\bullet}(D+\epsilon A) = \bigcap_{\epsilon >0} \okbd_{Y_\bullet}(D+\epsilon A) \subseteq \R^n_{\geq 0}$$
where $A$ is an ample divisor on $X$.
(Note that $\oklim_{Y_\bullet}(D)$ is independent of the choice of $A$.)
If $D$ is not pseudoeffective, then we set $\oklim_{Y_\bullet}(D)
:=\emptyset$.
\end{enumerate}\end{definition}

Note that we actually have $\okbd_{Y_\bullet}(D)=\okval_{Y_\bullet}(D)$ for any divisor $D$ and  any admissible flag $Y_\bullet$. However, we will only use the notation $\okval_{Y_\bullet}(D)$ when $D$ is known to be non-big or at least when the bigness of $D$ is not clear in order to distinguish our results from the well known cases for big divisors. We also remark that Boucksom's \emph{numerical Okounkov body} $\oknum_{Y_\bullet}(D)$ in \cite{B2} coincides with our limiting Okounkov body $\oklim_{Y_\bullet}(D)$.

\medskip

By construction, the valuative Okounkov body $\okval_{Y_\bullet}(D)$ is only an $\R$-linear invariant of $D$, not a numerical invariant of $D$ (see \cite[Remark 3.13 and Proposition 3.15]{CHPW1}). The limiting Okounkov body $\oklim_{Y_\bullet}(D)$ is a numerical invariant of $D$. More precisely, for pseudoeffective divisors $D,D'$, it is known that $D \equiv D'$ if and only if $\oklim_{Y_\bullet}(D)=\oklim_{Y_\bullet}(D')$ for every admissible flag $Y_\bullet$ on $X$ (see \cite[Theorem C]{CHPW1}).

\begin{lemma}[cf. {\cite[Lemma 3.4]{CHPW2}, \cite[Lemma 3.4]{CPW2}}]\label{okbdbir}
Let $D$ be a divisor on $X$. Consider a birational morphism $f : \widetilde{X} \to X$ with $\widetilde{X}$ smooth and an admissible flag
$$
\widetilde{Y}_\bullet : \widetilde{X}=\widetilde{Y}_0 \supseteq \widetilde{Y}_1 \supseteq \cdots \supseteq \widetilde{Y}_{n-1} \supseteq \widetilde{Y}_n=\{ x' \}
$$
on $\widetilde{X}$. Suppose that $f$ is isomorphic over a neighborhood of $f(x')$ and
$$
Y_\bullet:=f(\widetilde{Y}_\bullet) : X=f(\widetilde{Y}_0) \supseteq f(\widetilde{Y}_1) \supseteq \cdots \supseteq f(\widetilde{Y}_{n-1}) \supseteq f(\widetilde{Y}_n)=\{ f(x') \}
$$ is an admissible flag on $X$. Then
$\okval_{\widetilde{Y}_\bullet}(f^*D)=\okval_{Y_\bullet}(D)$ and
$\oklim_{\widetilde{Y}_\bullet}(f^*D) = \oklim_{Y_\bullet}(D)$.
\end{lemma}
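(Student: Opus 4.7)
\emph{Plan.} The plan is to prove the valuative equality directly using that the flag valuation is a local computation near the base point, and then to deduce the limiting equality by applying the valuative statement to the big perturbations $D+\eps A$ and reconciling the two perturbation conventions via an interleaving argument.

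\emph{Valuative part.} The flag valuation $\nu_{Y_\bullet}(D')$ depends only on the germ of $D'$ at $x=f(x')$: each $\nu_i$ is the vanishing order along the Cartier divisor $Y_i \subseteq Y_{i-1}$ at its generic point, which can equivalently be read off at $x$ since $Y_i$ is smooth at $x$. Because $f$ is an isomorphism over a neighborhood $U$ of $x$, under $f^{-1}(U)\cong U$ the flag $\widetilde Y_\bullet$ in $f^{-1}(U)$ is identified with $Y_\bullet$ in $U$, so for any $D'\in |D|_\R$ one has $f^*D'\in|f^*D|_\R$ with $\nu_{\widetilde Y_\bullet}(f^*D')=\nu_{Y_\bullet}(D')$. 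Conversely, given $\widetilde D'\in|f^*D|_\R$, the pushforward $f_*\widetilde D'\in|D|_\R$ (using that $f$ is birational, so $f_*f^*D=D$, and that $f_*$ preserves effectivity and $\R$-linear equivalence), while the difference $\widetilde D'-f^*f_*\widetilde D'$ is $f$-exceptional and hence supported in the complement of $f^{-1}(U)$; in particular it vanishes identically near $x'$, so $\nu_{\widetilde Y_\bullet}(\widetilde D')=\nu_{\widetilde Y_\bullet}(f^*f_*\widetilde D')=\nu_{Y_\bullet}(f_*\widetilde D')$. Combining both directions gives $\nu_{\widetilde Y_\bullet}(|f^*D|_\R)=\nu_{Y_\bullet}(|D|_\R)$, and taking closed convex hulls yields $\okval_{\widetilde Y_\bullet}(f^*D)=\okval_{Y_\bullet}(D)$.

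\emph{Limiting part.} Fix ample divisors $A$ on $X$ and $\widetilde A$ on $\widetilde X$. For every $\eps>0$, both $D+\eps A$ and $f^*D+\eps f^*A=f^*(D+\eps A)$ are big, so $\okbd=\okval$ on these classes and the valuative part yields $\okbd_{Y_\bullet}(D+\eps A)=\okbd_{\widetilde Y_\bullet}(f^*D+\eps f^*A)$. Intersecting over $\eps>0$,
$$\oklim_{Y_\bullet}(D) \;=\; \bigcap_{\eps>0}\okbd_{\widetilde Y_\bullet}(f^*D+\eps f^*A).$$
To identify the right-hand side with $\oklim_{\widetilde Y_\bullet}(f^*D)=\bigcap_{\eta>0}\okbd_{\widetilde Y_\bullet}(f^*D+\eta\widetilde A)$, I would interleave the two perturbations: set $B(\eps,\eta):=\okbd_{\widetilde Y_\bullet}(f^*D+\eps f^*A+\eta\widetilde A)$ for $\eps,\eta>0$, all of whose input classes are big. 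Because $\widetilde A$ is ample and $f^*A$ is the pullback of an ample divisor, each has $0$ in its own Okounkov body (general sections of $|m\widetilde A|$ and of $|mf^*A|$ avoid $x'$ for $m\gg 0$), so Brunn--Minkowski implies that $B(\eps,\eta)$ is decreasing in both parameters. Continuity of $\okbd$ on the big cone then gives, for each fixed $\eta>0$, $\bigcap_{\eps>0}B(\eps,\eta)=\okbd_{\widetilde Y_\bullet}(f^*D+\eta\widetilde A)$, and symmetrically $\bigcap_{\eta>0}B(\eps,\eta)=\okbd_{\widetilde Y_\bullet}(f^*D+\eps f^*A)$. Computing the double intersection $\bigcap_{\eps,\eta>0}B(\eps,\eta)$ in the two orders therefore forces
$$\bigcap_{\eps>0}\okbd_{\widetilde Y_\bullet}(f^*D+\eps f^*A) \;=\; \bigcap_{\eta>0}\okbd_{\widetilde Y_\bullet}(f^*D+\eta\widetilde A) \;=\; \oklim_{\widetilde Y_\bullet}(f^*D),$$
completing the proof.

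\emph{Main obstacle.} The valuative part is a direct local computation once one has the pullback/pushforward dictionary. The main subtlety is in the limiting part: the natural pulled-back perturbation $f^*A$ on $\widetilde X$ is only nef and big, not ample, so it cannot a priori be used in the definition of $\oklim_{\widetilde Y_\bullet}$. Interleaving with a genuine ample perturbation on $\widetilde X$, together with monotonicity (from Brunn--Minkowski plus $0\in\okbd(f^*A)$) and continuity of $\okbd$ on the big cone, is what bridges this gap.
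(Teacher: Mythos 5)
Your proposal is correct. For the valuative part, your argument (the flag valuation is determined by the germ of the divisor near the center, pullback along the isomorphism over $U$ gives one inclusion, and pushforward plus the fact that $\widetilde D'-f^*f_*\widetilde D'$ is supported on $f$-exceptional divisors, hence away from $f^{-1}(U)$, gives the other) is exactly the kind of local computation the paper has in mind; the paper itself does not write it out, but simply says it is ``almost identical'' to the limiting case, which it delegates entirely to \cite[Lemma 3.4]{CHPW2}. Where you genuinely diverge is the limiting part: instead of citing the earlier paper, you reduce to the valuative statement for the big classes $D+\eps A$ and then must reconcile the perturbation $f^*A$ (only nef and big, indeed semiample) with a true ample $\widetilde A$ on $\widetilde X$. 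Your interleaving argument does this correctly: the key inputs are the subadditivity $\okbd(\xi)+\okbd(\xi')\subseteq\okbd(\xi+\xi')$ together with $0\in\okbd_{\widetilde Y_\bullet}$ of a base-point-free class (a general member of $|mf^*A|$ or $|m\widetilde A|$ misses $x'$, forcing all $\nu_i=0$), which gives monotonicity of $B(\eps,\eta)$, and continuity of $\okbd$ on the big cone, which identifies each one-parameter intersection with the body at the boundary value; computing the double intersection in both orders then closes the argument. (Calling the subadditivity ``Brunn--Minkowski'' is a slight misnomer, but the property you actually use is the correct one.) What your route buys is a self-contained proof that in effect also shows the limiting body can be computed using the non-ample perturbation $f^*A$; what the citation route buys is brevity. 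Two trivial cases you silently skip but which are immediate: if $|D|_{\R}=\emptyset$ (resp.\ $D$ not pseudoeffective) then both sides of the valuative (resp.\ limiting) equality are empty.
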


\begin{proof}
The case of the limiting Okounkov body is shown in \cite[Lemma 3.4]{CHPW2}. The proof for the case of the valuative Okounkov body is almost identical, and we leave the details to the readers.
\end{proof}

\begin{remark}\label{smflag}
By Lemma \ref{okbdbir} and \cite[Lemma 3.5]{CHPW2}, we can assume that each $Y_i$ in the admissible flag $Y_\bullet$ on $X$ is smooth (see also \cite[Remark 3.6]{CHPW2}).
\end{remark}

\begin{lemma}[cf. {\cite[Lemma 3.9]{CHPW2}, \cite[Lemma 3.5]{CPW2}}]\label{okbdzd}
Let $D$ be a divisor on $X$ with the $s$-decomposition $D=P_s+N_s$ and the divisorial Zariski decomposition $D=P_\sigma+N_\sigma$. Fix an admissible flag $Y_\bullet$ on $X$. Then we have $\okval_{Y_\bullet}(D)=\okval_{Y_\bullet}(P_s) + \okval_{Y_\bullet}(N_s)$ and $\oklim_{Y_\bullet}(D)=\oklim_{Y_\bullet}(P_\sigma) + \oklim_{Y_\bullet}(N_\sigma)$. If $Y_n$ is a general point (i.e., $Y_n \not\subseteq  \Supp(N)$), then $\okval_{Y_\bullet}(D)=\okval_{Y_\bullet}(P_s)$ and $\oklim_{Y_\bullet}(D)=\oklim_{Y_\bullet}(P_\sigma)$.
\end{lemma}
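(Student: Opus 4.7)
The plan is to reduce both identities to two basic inputs: (i) the flag valuation $\nu_{Y_\bullet}$ is additive on sums of effective $\R$-divisors, $\nu_{Y_\bullet}(E_1+E_2)=\nu_{Y_\bullet}(E_1)+\nu_{Y_\bullet}(E_2)$, and (ii) the negative parts $N_s$, $N_\sigma$ are rigid enough that their relevant Okounkov bodies are single points. Additivity (i) is an induction along the flag: $\ord_{Y_1}$ is additive on effective divisors, and the restriction of $(E_1+E_2)-\nu_1(E_1+E_2)Y_1$ to $Y_1$ splits as the sum of the analogous restrictions of $E_1, E_2$, feeding the induction.

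For the $s$-decomposition identity, I would set up the bijection $|D|_{\R}\longleftrightarrow|P_s|_{\R}$ given by $L\mapsto L-N_s$. The forward direction uses $N_s\leq L$ for every $L\in|D|_{\R}$ (the defining infimum), and the reverse direction is immediate since $P'_s+N_s\in|D|_{\R}$ for every $P'_s\in|P_s|_{\R}$. By additivity, $\nu_{Y_\bullet}(|D|_{\R})=\nu_{Y_\bullet}(|P_s|_{\R})+\nu_{Y_\bullet}(N_s)$. To pass to Minkowski sums of the closed convex hulls, I would prove the rigidity $|N_s|_{\R}=\{N_s\}$: given $N'\sim_{\R}N_s$ with $N'\geq 0$, for every $P'_s\in|P_s|_{\R}$ one has $P'_s+N'\in|D|_{\R}$, hence $N_s\leq P'_s+N'$ componentwise; choosing $P'_s$ so that its coefficient on each fixed prime divisor approaches the infimum (which is zero by the definition of $N_s$) forces $N_s\leq N'$ componentwise, whence $N'-N_s$ is an effective $\R$-linearly trivial divisor and must vanish by intersecting with a very ample curve. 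Therefore $\okval_{Y_\bullet}(N_s)=\{\nu_{Y_\bullet}(N_s)\}$, and the Minkowski identity follows.

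For the divisorial Zariski decomposition, I would apply the valuative identity to the perturbation $D+\epsilon A$ for an ample $A$. Since $D+\epsilon A$ is big and hence abundant, $N_s(D+\epsilon A)=N_\sigma(D+\epsilon A)$, so $\okbd_{Y_\bullet}(D+\epsilon A)=\okbd_{Y_\bullet}(P_\sigma(D+\epsilon A))+\{\nu_{Y_\bullet}(N_\sigma(D+\epsilon A))\}$. Taking $\bigcap_{\epsilon>0}$ and using the coefficient-wise continuity $\ord_E(\|D+\epsilon A\|)\to\ord_E(\|D\|)$ as $\epsilon\to 0^+$ (which is how $\ord_E(\|D\|)$ is defined for pseudoeffective $D$), we obtain $N_\sigma(D+\epsilon A)\to N_\sigma(D)$, hence $\nu_{Y_\bullet}(N_\sigma(D+\epsilon A))\to\nu_{Y_\bullet}(N_\sigma)$. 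The translated big Okounkov bodies then limit to $\oklim_{Y_\bullet}(P_\sigma)+\{\nu_{Y_\bullet}(N_\sigma)\}$, and the identity $\oklim_{Y_\bullet}(N_\sigma)=\{\nu_{Y_\bullet}(N_\sigma)\}$, which follows from $\kappanu(N_\sigma)=0$ together with the dimension bound $\dim\oklim_{Y_\bullet}\leq\kappanu$ cited in Subsection~\ref{okbdsubsec}, completes the second identity.

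For the final assertion, since $Y_n\in Y_i$ for every $i$, the hypothesis $Y_n\not\subseteq\Supp(N)$ forces $Y_1\not\subseteq\Supp(N)$, so $\nu_1(N)=\ord_{Y_1}(N)=0$; the same reasoning applied inductively to the effective restriction $N|_{Y_1}$ on $Y_1$ (whose support still avoids $Y_n$, hence $Y_2$) yields $\nu_i(N)=0$ for all $i$. Hence $\okval_{Y_\bullet}(N_s)=\oklim_{Y_\bullet}(N_\sigma)=\{0\}$, and the Minkowski sums collapse to the stated identities. The most delicate step I expect is commuting the Minkowski decomposition with the intersection $\bigcap_{\epsilon>0}$ in the divisorial Zariski case, since $P_\sigma(D+\epsilon A)$ is not literally $P_\sigma(D)+\epsilon A$ but differs from it by the effective divisor $N_\sigma(D)-N_\sigma(D+\epsilon A)$ which only tends to zero coefficient-wise; justifying this cleanly will likely require a squeeze argument based on the monotonicity $\okbd_{Y_\bullet}(D+\epsilon A)\supseteq\okbd_{Y_\bullet}(D+\epsilon' A)$ for $\epsilon>\epsilon'>0$ together with the continuity of the asymptotic order functions along an ample ray.
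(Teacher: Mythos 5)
Your valuative-body argument is correct and fleshes out in detail what the paper dispatches in one sentence: the paper simply invokes $R(X,D)\simeq R(X,P_s)$ together with the construction of $\okval_{Y_\bullet}$. Your route via the bijection $|D|_\R\leftrightarrow|P_s|_\R$, the additivity of $\nu_{Y_\bullet}$, and the rigidity $|N_s|_\R=\{N_s\}$ is sound; in particular the step $\inf_{P'\in|P_s|_\R}\ord_E(P')=0$ is exactly the idempotence $N_s(P_s)=0$, which follows from the minimality characterization of $P_s$ (Prokhorov) that the paper records. The closing observation that $Y_n\notin\Supp N$ forces $\nu_{Y_\bullet}(N)=0$ by induction down the flag is also correct, and it is enough to conclude $\okval_{Y_\bullet}(D)=\okval_{Y_\bullet}(P_s)$ and $\oklim_{Y_\bullet}(D)=\oklim_{Y_\bullet}(P_\sigma)$ when $Y_n$ is general.

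For the limiting-body identity, however, the paper does not reprove it: it is cited verbatim from \cite[Lemma 3.9]{CHPW2}. Your attempted re-derivation, applying the valuative identity to the big divisor $D+\epsilon A$ and then intersecting over $\epsilon$, has a genuine gap exactly where you flag it. You obtain $\oklim_{Y_\bullet}(D)=\bigcap_{\epsilon>0}\bigl(\okbd_{Y_\bullet}(P_\sigma(D+\epsilon A))+\{v_\epsilon\}\bigr)$ with $v_\epsilon:=\nu_{Y_\bullet}(N_\sigma(D+\epsilon A))$, and you must replace the varying body and the varying translation simultaneously by $\oklim_{Y_\bullet}(P_\sigma)$ and $v_0:=\nu_{Y_\bullet}(N_\sigma)$. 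Coefficientwise convergence $N_\sigma(D+\epsilon A)\to N_\sigma(D)$ does give $v_\epsilon\to v_0$ (via the additivity $v_0=v_\epsilon+\nu_{Y_\bullet}(\delta(\epsilon))$ with $\delta(\epsilon):=N_\sigma(D)-N_\sigma(D+\epsilon A)\geq 0$), and the inclusion $\oklim_{Y_\bullet}(P_\sigma)+\{v_0\}\subseteq\oklim_{Y_\bullet}(D)$ follows easily by subadditivity applied to $(P_\sigma(D)+\epsilon A)+N_\sigma(D)=D+\epsilon A$. The reverse inclusion is not automatic: the monotonicity $\okbd_{Y_\bullet}(D+\epsilon' A)\subseteq\okbd_{Y_\bullet}(D+\epsilon A)$ on which your squeeze relies is itself only an inclusion up to a translate unless $A$ is chosen effective with $Y_n\notin\Supp A$, and since $P_\sigma(D+\epsilon A)=P_\sigma(D)+\epsilon A+\delta(\epsilon)$ the two families of bodies are nested only up to the moving translations $\nu_{Y_\bullet}(\delta(\epsilon))$. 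This can be repaired, but it takes a nontrivial argument (essentially the content of \cite[Lemma 3.9]{CHPW2}); as written, your proposal does not close the loop, and the cleanest fix here is to cite that lemma as the paper does rather than re-derive it.
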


\begin{proof}
The assertion for $\okval_{Y_\bullet}(D)$ follows from the fact that $R(X, D) \simeq R(X, P_s)$ and the construction of the valuative Okounkov body. The assertion for $\oklim_{Y_\bullet}(D)$ is nothing but \cite[Lemma 3.9]{CHPW2}.
\end{proof}

By definition, $\okval_{Y_\bullet}(D) \subseteq \oklim_{Y_\bullet}(D)$, and the inclusion can be strict in general (see \cite[Examples 4.2 and 4.3]{CHPW1}). If $D$ is big, then $\okbd_{Y_\bullet}(D)=\okval_{Y_\bullet}(D)=\oklim_{Y_\bullet}(D)$. For a divisor $D$ with $\kappa(D) \geq 0$, by \cite[Proposition 3.3]{B2} and \cite[Theorem 1.1]{CP}, we have
$$
\dim \okval_{Y_\bullet}(D) = \kappa (D) \leq \dim \oklim_{Y_\bullet}(D) \leq \nu_{\BDPP}(D)
$$
for any admissible flag $Y_\bullet$.

\begin{remark}\label{rem:correctionnumdim}
It was shown in \cite[Lemma 4.8]{B2} and \cite[Proof of Proposition 3.21]{CHPW1} that
$$
\dim \oklim_{Y_\bullet}(D) \leq \kappa_{\vol}(D)= \max \left\{k\in \Z_{\geq 0}\left| \liminf\limits_{\varepsilon \to 0} \frac{\vol_X(D+ \varepsilon A)}{\varepsilon^{n-k}} > 0\right. \right\}
$$
for every admissible flag $Y_\bullet$ on $X$. In \cite{CHPW1, CPW1, CPW2}, we use the coincidence of the numerical Iitaka dimensions $\kappa_{\vol}(D) = \nu_{\BDPP}(D)$, which was claimed in \cite{lehmann-nu}. However, based on Lesieutre's example in \cite{lesieutre}, Choi--Park proved that there exist smooth projective variety $Y$ and pseudoeffective divisor $E$ such that $\kappa_{\vol}(E) > \nu_{\BDPP}(E)$ (see \cite[Theorem 1.2]{CP}). Thus some results of \cite{CHPW1, CPW1, CPW2} are affected by these examples (in those papers, $\kappa_{\nu}$ is used to mean $\kappa_{\sigma}$, and is supposed to be equal to $\nu_{\BDPP}$ and $\kappa_{\vol}$). Fortunately, we have
$$
\nu_{\BDPP}(D)=\max\{ \dim \oklim_{Y_\bullet}(D) \mid Y_\bullet~\text{is an admissible flag on $X$} \},
$$
by \cite[Theorem 1.1]{CP}. If we use $\nu_{\BDPP}$ for the numerical Iitaka dimension, then all the results in \cite{CHPW1, CPW1, CPW2} are valid.
\end{remark}

In \cite{CHPW1}, we introduced a Nakayama subvariety and positive volume subvariety of a divisor $D$ to extract asymptotic invariants of $D$ from the Okounkov bodies.

\begin{definition}[{\cite[Definitions 2.12 and 2.19]{CHPW1}, \cite[Definition  4.1]{CPW2}}]
\begin{enumerate}[leftmargin=0cm,itemindent=.6cm]
\item[(1)] For a divisor $D$ such that $\kappa(D)\geq 0$, a \emph{Nakayama subvariety of $D$} is defined as an irreducible subvariety $U \subseteq X$ such that $\dim U=\kappa(D)$ and for every integer $m \geq 0$ the natural map
$$
H^0(X, \mc O_X(\lfloor mD \rfloor)) \to H^0(U, \mc O_U(\lfloor mD \rfloor|_U))
$$
is injective (or equivalently, $H^0(X, \mc I_U \otimes \mc O_X(\lfloor mD \rfloor))=0$ where $\mc I_U$ is an ideal sheaf of $U$ in $X$).
\item[(2)] For a divisor $D$ with $\nu_{\BDPP}(D)\geq 0$, a \emph{positive volume subvariety of $D$} is defined as an irreducible subvariety $V \subseteq X$ such that $\dim V = \nu_{\BDPP}(D)$ and $\vol_{X|V}^+(D)>0$.
\end{enumerate}
\end{definition}

We have the following characterization of a Nakayama subvariety and a positive volume subvariety in terms of Okounkov bodies.

\begin{theorem}[{\cite[Theorem 1.2]{CPW2}}]\label{nakpvscrit}
Let $D$ be a divisor on $X$. Fix an admissible flag $Y_\bullet$ such that $Y_n$ is a general point in $X$. Then we have the following:
\begin{enumerate}[leftmargin=0cm,itemindent=.6cm]
\item[$(1)$] If $D$ is effective, then $Y_\bullet$ contains a Nakayama subvariety of $D$ if and only if $\okval_{Y_\bullet}(D) \subseteq \{0 \}^{n-\kappa(D)} \times \R^{\kappa(D)}$.
\item[$(2)$] If $D$ is pseudoeffective, then $Y_\bullet$ contains a positive volume subvariety of $D$ if and only if $\oklim_{Y_\bullet}(D) \subseteq \{0 \}^{n-\nu_{\BDPP}(D)} \times \R^{\nu_{\BDPP}(D)}$ and $\dim \oklim_{Y_\bullet}(D)=\nu_{\BDPP}(D)$.
\end{enumerate}
\end{theorem}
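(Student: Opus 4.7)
My plan is to prove statements (1) and (2) separately. For (1), I would first reduce to a combinatorial observation about the flag valuation; for (2), I would apply the classical slice formula for restricted volumes of big divisors and then pass to the limit.

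\medskip
\textbf{Plan for (1).} Let $k:=\kappa(D)$ and $U:=Y_{n-k}$, the only member of the flag of the right dimension, so that ``$Y_\bullet$ contains a Nakayama subvariety'' is equivalent to ``$U$ is a Nakayama subvariety''. The central observation is that for any integral effective divisor $D'\in|\lfloor mD\rfloor|$, the first $n-k$ coordinates of $\nu_{Y_\bullet}(D')$ all vanish if and only if $U\not\subseteq\Supp(D')$. Indeed, since $U\subseteq Y_1$, any component of $\Supp(D')$ containing $U$ would equal $Y_1$, so $U\not\subseteq\Supp(D')$ forces $\nu_1(D')=\ord_{Y_1}(D')=0$; then $U\not\subseteq\Supp(D'|_{Y_1})$ and the argument iterates down the flag. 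Conversely, if all these valuations vanish, the successive restrictions are well defined and $U$ is not a component of $D'|_{Y_1}|_{Y_2}\cdots|_{Y_{n-k-1}}$, which implies $U\not\subseteq\Supp(D')$. The condition $\nu_{Y_\bullet}(|\lfloor mD\rfloor|)\subseteq\{0\}^{n-k}\times\R^k$ for every $m$ is then equivalent to injectivity of every restriction map $H^0(X,\mathcal{O}_X(\lfloor mD\rfloor))\to H^0(U,\mathcal{O}_U(\lfloor mD|_U\rfloor))$, i.e., to $U$ being a Nakayama subvariety. A standard density argument between the rescaled $\frac{1}{m}\nu_{Y_\bullet}(|\lfloor mD\rfloor|)$ and $\nu_{Y_\bullet}(|D|_\R)$ will close the loop for $\okval_{Y_\bullet}(D)$.

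\medskip
\textbf{Plan for (2).} Set $k:=\kappanu(D)$ and $V:=Y_{n-k}$, and fix an ample divisor $A$ on $X$. For $\varepsilon>0$ the divisor $D+\varepsilon A$ is big and, because $Y_n$ is a general point, $V\not\subseteq\bp(D+\varepsilon A)$. I would apply the Lazarsfeld--Musta\c{t}\u{a} slice formula to obtain
$$
\vol_{X|V}(D+\varepsilon A)\;=\;k!\cdot\vol_{\R^k}\!\bigl(\okbd_{Y_\bullet}(D+\varepsilon A)\cap(\{0\}^{n-k}\times\R^k)\bigr),
$$
and then let $\varepsilon\to 0^+$. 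The left side will tend to $\vol_{X|V}^+(D)$ by continuity of the augmented restricted volume, and the right side to $k!\cdot\vol_{\R^k}(\oklim_{Y_\bullet}(D)\cap(\{0\}^{n-k}\times\R^k))$ by monotonicity of the family $\{\okbd_{Y_\bullet}(D+\varepsilon A)\}_{\varepsilon>0}$. Since $\dim\oklim_{Y_\bullet}(D)\leq\kappanu(D)=k$ in general, the resulting volume is positive exactly when $\oklim_{Y_\bullet}(D)$ has dimension $k$ and is contained in $\{0\}^{n-k}\times\R^k$: a $k$-dimensional convex body whose intersection with a $k$-dimensional linear subspace has dimension $k$ must lie in that subspace. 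This yields the desired equivalence.

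\medskip
\textbf{Main obstacle.} I expect the most delicate point to be the $\varepsilon\to 0^+$ passage in (2): the slice formula is classical only in the big regime, and exchanging the slicing and volume operations with the limit requires a Fujita-type approximation that remains valid at the boundary of the pseudoeffective cone---exactly the role of the approximation lemmas announced in the introduction. The $\R$-divisor density step in (1) is also slightly technical but should follow from the standard comparison between $|D|_{\R}$-valuation vectors and rescaled $|\lfloor mD\rfloor|_{\Z}$-valuation vectors.
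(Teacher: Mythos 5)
The statement you are proving is only cited in this paper (as \cite[Theorem 1.2]{CPW2}), so there is no in-paper proof to compare against; I will therefore assess your blind proof on its own merits. Your overall strategy for both parts is sound and closely matches the natural approach: reduce (1) to a combinatorial fact about when the flag valuation kills the first $n-\kappa(D)$ coordinates, and reduce (2) to the slice formula $\vol_{X|V}(D)=\kappa!\cdot\vol_{\R^\kappa}\bigl(\okbd_{Y_\bullet}(D)\cap(\{0\}^{n-\kappa}\times\R^\kappa)\bigr)$ in the big regime, passed to the limit via monotone intersection. (For the record, the needed slice formula over the first $n-\kappa$ coordinates with $V=Y_{n-\kappa}\not\subseteq\bp$ is \cite[Theorem 1.1]{CPW2} rather than anything stated in \cite{lm-nobody}; you should cite it as such, and note that the hypothesis $V\not\subseteq\bp(D+\varepsilon A)$ indeed holds because $Y_n\in V$ is a general point, hence $Y_n\not\in\bm(D)\supseteq\bp(D+\varepsilon A)$.)

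There is one genuinely wrong sentence in part (1): the claim that ``since $U\subseteq Y_1$, any component of $\Supp(D')$ containing $U$ would equal $Y_1$'' is false --- there are generally many prime divisors passing through a subvariety of codimension $\geq 2$ besides $Y_1$ (take $X=\P^3$, $Y_1$ a plane, $U$ a line in it, and $D'$ another plane through $U$). This does not sink your proof, because the implication you actually need, namely $U\not\subseteq\Supp(D')\Rightarrow\nu_1(D')=0$, holds for the elementary reason that if $\ord_{Y_1}(D')>0$ then $U\subseteq Y_1\subseteq\Supp(D')$. More importantly, the equivalence ``$\nu_1(D')=\cdots=\nu_{n-\kappa}(D')=0$ iff $U\not\subseteq\Supp(D')$'' cannot be detected at the first coordinate alone: a divisor $D'$ with a component through $U$ distinct from all the $Y_i$ has $\nu_1=\cdots=\nu_{n-\kappa-1}=0$ and is only caught at the last step, where one uses that $U=Y_{n-\kappa}$ has codimension one in $Y_{n-\kappa-1}$ and hence must be a component of $D'_{n-\kappa}$ if $U\subseteq\Supp(D'_{n-\kappa})$, forcing $\nu_{n-\kappa}>0$. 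Your iterated restriction does implicitly carry this, but you should state the argument at the top step explicitly rather than lean on the false first-step claim.
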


The following is the main result of \cite{CHPW1}.

\begin{theorem}[{\cite[Theorems A and B]{CHPW1}}]\label{chpwmain}
\begin{enumerate}[leftmargin=0cm,itemindent=.6cm]
\item[$(1)$] Let $D$ be a divisor on $X$ with $\kappa(D) \geq 0$. Fix an admissible flag $Y_\bullet$ containing a Nakayama subvariety $U$ of $D$ such that $Y_n$ is a general point in $X$. Then $\okval_{Y_\bullet}(D) \subseteq \{0 \}^{n-\kappa(D)} \times \R^{\kappa(D)}$ so that one can regard $\okval_{Y_\bullet}(D) \subseteq \R^{\kappa(D)}$. Furthermore, we have
$$\dim \okval_{Y_\bullet}(D)=\kappa(D) \text{ and } \vol_{\R^{\kappa(D)}}(\okval_{Y_\bullet}(D))=\frac{1}{\kappa(D)!} \vol_{X|U}(D).$$
\item[$(2)$] Let $D$ be a pseudoeffective divisor on $X$, and fix an admissible flag $Y_\bullet$ containing a positive volume subvariety $V$ of $D$. Then $\oklim_{Y_\bullet}(D) \subseteq \{0 \}^{n-\nu_{\BDPP}(D)} \times \R^{\nu_{\BDPP}(D)}$ so that one can regard $\oklim_{Y_\bullet}(D) \subseteq \R^{\nu_{\BDPP}(D)}$. Furthermore, we have
$$\dim \oklim_{Y_\bullet}(D)=\nu_{\BDPP}(D) \text{ and } \vol_{\R^{\nu_{\BDPP}(D)}}(\oklim_{Y_\bullet}(D))=\frac{1}{\nu_{\BDPP}(D)!} \vol_{X|V}^+(D).$$
\end{enumerate}
\end{theorem}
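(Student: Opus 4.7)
The strategy is to translate the desired identity between Euclidean volumes of Okounkov bodies into an identity between restricted volumes, and then to evaluate both restricted volumes on the Iitaka model.

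Since $D$ is abundant, $\kappa(D)=\kappa_\nu(D)$, and by Proposition~\ref{nak=psv} the Nakayama subvariety $V$ is simultaneously a positive volume subvariety of $D$. Applying Theorem~\ref{chpwmain}(1) to the flag $Y_\bullet$ and to $V$ as a Nakayama subvariety, and applying Theorem~\ref{chpwmain}(2) to the same flag with $V$ viewed now as a positive volume subvariety, we obtain
\[
\vol_{\R^{\kappa(D)}}(\okval_{Y_\bullet}(D))=\tfrac{1}{\kappa(D)!}\vol_{X|V}(D),\qquad \vol_{\R^{\kappa(D)}}(\oklim_{Y_\bullet}(D))=\tfrac{1}{\kappa(D)!}\vol_{X|V}^+(D).
\]
Hence the theorem will reduce to proving the purely asymptotic identity
\[
\vol_{X|V}^+(D)=\deg(\phi|_{V'})\cdot\vol_{X|V}(D).
\]

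To establish this, I would pass to the Iitaka model. Using Lemma~\ref{okbdbir} together with the birational invariance of the restricted and augmented restricted volumes, I would transfer the problem to $(X',V',f^*D)$, where $V'$ remains a Nakayama and positive volume subvariety of $f^*D$. Since $D$ is abundant, Theorem~\ref{abprop}(2) gives $P_s(f^*D)=P_\sigma(f^*D)$; after possibly enlarging $X'$, abundance then supplies the structural identification $P_\sigma(f^*D)\equiv \phi^*M$ for some big $\R$-divisor $M$ on $Z$. Because $Y_n$ is a general point, Lemma~\ref{okbdzd} lets me discard the negative parts and effectively replace $f^*D$ with $\phi^*M$ in both volume computations.

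Two computations then finish the argument. Using that $V'$ is a Nakayama subvariety, the restriction map $H^0(X',\phi^*(mM))\hookrightarrow H^0(V',(\phi|_{V'})^*(mM))$ is injective, and the source identifies with $H^0(Z,mM)$ via $\phi_*\mathcal{O}_{X'}=\mathcal{O}_Z$; comparing asymptotics yields $\vol_{X|V}(D)=\vol_Z(M)$. For the augmented restricted volume, a Fujita-type approximation for abundant divisors (Lemma~\ref{altconlim}), together with the fact that $V'$ is a positive volume subvariety, identifies $\vol_{X|V}^+(D)$ with $\vol_{V'}\bigl((\phi|_{V'})^*M\bigr)$; since $\phi|_{V'}\colon V'\to Z$ is generically finite of degree $d=\deg(\phi|_{V'})$, the pullback formula gives $\vol_{V'}\bigl((\phi|_{V'})^*M\bigr)=d\cdot\vol_Z(M)$. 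Combining, $\vol_{X|V}^+(D)=d\cdot\vol_{X|V}(D)$, as required. The ``in particular'' claim then follows at once: since $\okval_{Y_\bullet}(D)\subseteq\oklim_{Y_\bullet}(D)$ are convex bodies of the same dimension $\kappa(D)$ (by Theorem~\ref{chpwmain}), equality of volumes forces equality of the bodies, which happens precisely when $d=1$.

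The main obstacles I anticipate are the structural descent $P_\sigma(f^*D)\equiv\phi^*M$ on a sufficiently high birational model, which is exactly where abundance genuinely enters, and the identification $\vol_{X|V}^+(D)=\vol_{V'}\bigl((\phi|_{V'})^*M\bigr)$, which is precisely what Lemma~\ref{altconlim} is designed to provide; once those are in place, the rest is a bookkeeping exercise with degrees of generically finite maps.
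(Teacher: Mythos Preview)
Your proposal does not address the stated theorem. Theorem~\ref{chpwmain} records the volume formulas for $\okval_{Y_\bullet}(D)$ and $\oklim_{Y_\bullet}(D)$ separately in terms of $\vol_{X|U}(D)$ and $\vol^+_{X|V}(D)$; in the present paper it is quoted from \cite{CHPW1} as a preliminary fact and carries no proof here to compare against. Your write-up, by contrast, explicitly \emph{invokes} Theorem~\ref{chpwmain}(1) and (2) as known inputs in its second paragraph and then proceeds to establish the identity $\vol_{\R^{\kappa(D)}}(\oklim_{Y_\bullet}(D))=\deg(\phi|_{V'})\cdot\vol_{\R^{\kappa(D)}}(\okval_{Y_\bullet}(D))$. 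That is Theorem~\ref{comvallim} (Theorem~\ref{main3}), not Theorem~\ref{chpwmain}; as a proof of the statement actually given, the argument is circular.

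Read instead as a proof of Theorem~\ref{comvallim}, your outline is close to the paper's. The paper applies Lemmas~\ref{altconval} and \ref{altconlim} directly to write the two Euclidean volumes as $\lim_m \tfrac{1}{\kappa(D)!}\vol_{Z_m}(H_m)$ and $\lim_m \tfrac{1}{\kappa(D)!}\vol_{V_m}(M_m|_{V_m})$, and then uses $\phi_m|_{V_m}^*H_m=M_m$ to produce the factor $d$. Your route first reduces to the restricted-volume identity $\vol^+_{X|V}(D)=d\cdot\vol_{X|V}(D)$ and then passes to a single Iitaka model. This is the same mechanism rephrased, with one soft spot: the descent step ``$P_\sigma(f^*D)\equiv\phi^*M$'' is stronger than what Theorem~\ref{abprop}(3) actually gives, namely $P_\sigma(\mu^*D)\sim_{\Q}P_\sigma(g^*B)$, and bridging the gap between $P_\sigma(g^*B)$ and a genuine pullback is exactly what the approximation Lemma~\ref{fujita} (feeding into Lemma~\ref{altconlim}) is there to do.
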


\begin{remark}
As in \cite{CHPW1}, \cite{CPW2}, when considering $\okval_{Y_\bullet}(D)$ (resp. $\oklim_{Y_\bullet}(D)$), we say that $Y_n$ is \emph{general} if it is not contained in $\text{SB}(D)$ (resp. $\bm(D)$) (see \cite[Remark 4.7]{CPW2}).
\end{remark}

The relation between the valuative Okounkov bodies and restricted volumes are also studied in \cite{DP}.

\subsection{Abundant divisor}\label{absubsec}
In this paper, we adopt the following notion of abundance.

\begin{definition}\label{def-abundant}
A pseudoeffective divisor $D$ on $X$ is said to be \emph{abundant} if $\kappa(D)=\nu_{\BDPP}(D)$ holds.
\end{definition}

We will need the following generalization of the well-known result of Kawamata for nef and abundant divisors \cite[Proposition 2.1]{kawa85} (see also the Errata of \cite{lehmann-red}).

\begin{theorem}[{\cite[Theorem 1.4]{CP}}]\label{thrm-abundant map}
Let $D$ be an effective $\R$-divisor on $X$ with $\kappa(D) > 0$. Then $D$ is abundant in the sense that $\kappa(D) = \nu_{\BDPP}(D)$ holds if and only if there are a birational morphism $\mu \colon W \to X$ from a smooth projective variety $W$ and a surjective morphism $g \colon W \to T$ to a smooth projective variety $Z$ with connected fibers such that
$$
P_{\sigma}(\mu^*D) \sim_{\Q} P_{\sigma}(g^*B)
$$
for some big divisor $B$ on $T$, where $g \colon W \to T$ is a birational model of the Iitaka fibration of $D$.
\end{theorem}

The following theorem essentially due to Lehmann will play crucial roles in proving our main results, Theorems \ref{main1}, \ref{main2}, and \ref{main3}.

\begin{theorem}\label{abprop}
Let $D$ be a pseudoeffective abundant divisor on $X$.
Then the following numerical properties hold:
\begin{enumerate} [leftmargin=0cm,itemindent=.6cm]
 \item[$(1)$] If $D'$ is a divisor on $X$ such that $\kappa(D') \geq 0$ and $D \equiv D'$, then $D'$ is also an abundant divisor.
 \item[$(2)$] For any divisorial valuation $\sigma$ on $X$ with the center $V=\Cent_X \sigma$ on $X$, we have
 $$
 \ord_V(||D||)= \inf\{ \sigma(D') \mid D \sim_{\R} D' \geq 0 \}.
 $$
 In particular, $P_{\sigma}(D)=P_s(D)$.
 \end{enumerate}
\end{theorem}
\begin{proof}
For $(1)$, we note that if $\kappa(D') \geq 0$, then $D'$ is $\Q$-linearly equivalent to an effective divisor. Thus $(1)$ follows from \cite[Corollary 6.3]{lehmann-red}.
For $(2)$, we apply \cite[Proposition 6.4]{lehmann-red} and \cite[Lemma 2.3]{CPW2}.
Note that the condition (5) of \cite[Theorem 6.1]{lehmann-red}, which is asserted in Theorem \ref{thrm-abundant map}, is used in the proofs of \cite[Corollary 6.3]{lehmann-red} and \cite[Proposition 6.4]{lehmann-red}.
\end{proof}

\begin{lemma}\label{notinsb}
Let $D$ be a pseudoeffective abundant divisor on $X$. If $V$ is a Nakayama subvariety of $D$ or a positive volume subvariety of $D$, then $V \not\subseteq \SB(D)$.
\end{lemma}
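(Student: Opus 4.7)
The plan is to pass to the Iitaka fibration provided by Theorem \ref{abprop}(3) and use the bigness of the divisor on the base. Let $\mu\colon W\to X$ and $g\colon W\to T$ be the morphisms furnished by Theorem \ref{abprop}(3), so that $P_{\sigma}(\mu^*D)\sim_\Q g^*P_{\sigma}(B)$ for some big divisor $B$ on $T$ with $\dim T=\kappa(D)$. Let $V$ be either a Nakayama or a positive volume subvariety of $D$, and let $V'$ denote its strict transform on $W$; after further blowing up, we may assume $\mu|_{V'}\colon V'\to V$ is birational. Because $\dim V=\kappa(D)=\dim T$, the morphism $g|_{V'}\colon V'\to T$ is either dominant (hence generically finite) or factors through a proper closed subvariety of $T$.

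First I would show that $V\not\subseteq\Supp(N_{\sigma}(D))$. If $V$ lay in a prime divisor $F\subseteq\Supp(N_{\sigma}(D))$, then $\ord_F(||D||)>0$, so the fixed part of $|\lfloor mD\rfloor|$ contains $F$ for $m\gg 0$. Every section of $\lfloor mD\rfloor$ would then vanish along $V$, contradicting Nakayama injectivity in the Nakayama case, while $V\subseteq\Supp(N_{\sigma}(D))\subseteq\bm(D)$ contradicts $\vol_{X|V}^+(D)>0$ in the positive volume case. Hence the general point of $V'$ lies outside $\Exc(\mu)\cup\Supp(N_{\sigma}(\mu^*D))$.

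The crux is to prove dominance of $g|_{V'}$. Assume instead that $g(V')\subsetneq T$. Since $P_{\sigma}(B)$ is big on $T$, writing $m_0 P_{\sigma}(B)=A_0+E_0$ with $A_0$ ample and $E_0$ effective and performing a standard dimension count yields, for $m\gg 0$, a nonzero section $s\in H^0(T, mm_0 P_{\sigma}(B))$ vanishing on $g(V')$. Pulling back via $g$, twisting by the canonical section of $mm_0 N_{\sigma}(\mu^*D)$, and transferring via $H^0(W,\mu^*(mm_0 D))\cong H^0(X, mm_0 D)$ produces a nonzero section of $\lfloor mm_0 D\rfloor$ on $X$ vanishing on $V$. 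This immediately contradicts Nakayama injectivity, and in the positive volume case one additionally deduces that the image of $H^0(X,\lfloor mD\rfloor)\to H^0(V,\lfloor mD\rfloor|_V)$ grows only like $O(m^{\dim g(V')})<O(m^{\kappa(D)})$, contradicting $\vol_{X|V}^+(D)>0$ via Theorem \ref{chpwmain}(2) after perturbing by $\eps A$ and using continuity of restricted volume.

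With dominance of $g|_{V'}$ established, pick a general $p'\in V'\setminus(\Exc(\mu)\cup\Supp(N_{\sigma}(\mu^*D)))$ with $g(p')\notin\SB(P_{\sigma}(B))\subsetneq T$, and choose an effective $B^*\sim_\R P_{\sigma}(B)$ avoiding $g(p')$. Then $g^*B^*+N_{\sigma}(\mu^*D)\sim_\R \mu^*D$ is effective and misses $p'$, and $\mu_*(g^*B^*+N_{\sigma}(\mu^*D))\sim_\R D$ is an effective $\R$-linear representative of $D$ missing $\mu(p')\in V$; hence $V\not\subseteq\SB(D)$. The main obstacle is the dominance of $g|_{V'}$: one must track section spaces through the $\Q$-linear equivalence $P_{\sigma}(\mu^*D)\sim_\Q g^*P_{\sigma}(B)$ and the divisorial Zariski decomposition of $\mu^*D$, and in the positive volume case relate the asymptotic size of the restriction image to the Iitaka base.
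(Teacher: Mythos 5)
The paper's proof is a few lines: for a Nakayama subvariety the claim is immediate from the definition (equivalently, from Theorem~\ref{nakpvscrit}(1): since $\okval_{Y_\bullet}(D)\subseteq\{0\}^{n-\kappa(D)}\times\R^{\kappa(D)}$ for a flag through $V$, every $D'\in|D|_\R$ has vanishing first $n-\kappa(D)$ flag-valuations, which forces $V\not\subseteq\Supp(D')$); for a positive volume subvariety one applies Theorem~\ref{chpwmain}(2) to get $\oklim_{Y_\bullet}(D)\subseteq\{0\}^{n-\kappanu(D)}\times\R^{\kappanu(D)}$ and then uses the containment $\okval_{Y_\bullet}(D)\subseteq\oklim_{Y_\bullet}(D)$, which is exactly where abundance ($\kappa=\kappa_\nu$) enters. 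Your proposal never uses this mechanism and instead rebuilds the argument from the Iitaka-fibration description of Theorem~\ref{abprop}(3). That is a genuinely different route, but it is both much longer than necessary and has concrete gaps.

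Two specific problems. First, Theorem~\ref{abprop}(3) gives $P_\sigma(\mu^*D)\sim_\Q P_\sigma(g^*B)$, not $P_\sigma(\mu^*D)\sim_\Q g^*P_\sigma(B)$ as you assert. These differ in general, and your final step (choosing $B^*\sim_\R P_\sigma(B)$ effective, pulling back by $g$, adding $N_\sigma(\mu^*D)$, and pushing forward) relies precisely on having $g^*P_\sigma(B)$ as the positive part; with $P_\sigma(g^*B)$ one does not directly get an effective representative of $\mu^*D$ avoiding $p'$ this way. Second, the dominance-of-$g|_{V'}$ argument is incomplete in the positive volume case: the growth bound you describe controls $\vol_{X|V}(D)$, whereas the positive volume condition is $\vol_{X|V}^+(D)>0$, which is a limit over perturbations $D+\eps A$. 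For $D+\eps A$ the relevant Iitaka fibration is trivial, so the ``perturb by $\eps A$ and use continuity'' step is not actually carried out and does not obviously work. (Also note that for the Nakayama case your Step~1 essentially already yields the full conclusion once one handles the fractional part $\{mD\}$, so the subsequent Iitaka machinery is unnecessary there.) The efficient route is the one the paper takes: invoke Theorems~\ref{nakpvscrit} and~\ref{chpwmain} together with $\okval_{Y_\bullet}(D)\subseteq\oklim_{Y_\bullet}(D)$, which is where abundance is used.
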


\begin{proof}
If $V$ is a Nakayama subvariety of $D$, then the assertion follows from definition. Assume that $V$ is a positive volume subvariety of $D$. We can take an admissible flag $Y_\bullet$ containing $V$. By Theorem \ref{chpwmain} (2), $\oklim_{Y_\bullet}(D) \subseteq \{ 0 \}^{n-\nu_{\BDPP}(D)} \times \R^{\nu_{\BDPP}(D)}$. Since $\okval_{Y_\bullet}(D) \subseteq \oklim_{Y_\bullet}(D)$, it follows that $\ord_{V}(D')=0$ for every effective divisor $D' \sim_{\R} D$. Thus $V \not\subseteq \Supp(D')$. Since $\text{SB}(D)\subseteq\Supp (D')$, we are done.
\end{proof}

\begin{proposition}\label{nak=psv}
Let $D$ be a pseudoeffective abundant divisor on $X$. A subvariety $V$ of $X$ is a Nakayama subvariety of $D$ if and only if it is a positive volume subvariety of $D$.
\end{proposition}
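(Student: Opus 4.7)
The plan is to prove the two directions separately, using abundance to identify $\kappa(D)=\kappanu(D)$ and combining Theorems \ref{chpwmain} and \ref{nakpvscrit}. In either direction, I would fix an admissible flag $Y_\bullet$ on $X$ with $Y_{n-\kappa(D)}=V$ and $Y_n$ a general point of $X$, which is possible because $V\not\subseteq \SB(D)\cup \bm(D)$ by Lemma \ref{notinsb}. Since $\kappa(D)=\kappanu(D)$, the two coordinate subspaces $\{0\}^{n-\kappa(D)}\times \R^{\kappa(D)}$ and $\{0\}^{n-\kappanu(D)}\times \R^{\kappanu(D)}$ of $\R^n$ coincide, and I will use this identification freely.

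For the `only if' direction, suppose $V$ is a Nakayama subvariety of $D$. Theorem \ref{chpwmain}(1) then provides a convex body $\okval_{Y_\bullet}(D)$ of top dimension $\kappa(D)$ inside $\R^{\kappa(D)}$ whose Euclidean volume equals $\tfrac{1}{\kappa(D)!}\vol_{X|V}(D)$. A convex body of maximal dimension has strictly positive Euclidean volume, which forces $\vol_{X|V}(D)>0$, and hence $\vol_{X|V}^+(D)\geq \vol_{X|V}(D)>0$. Combined with $\dim V=\kappa(D)=\kappanu(D)$, this shows that $V$ is a positive volume subvariety of $D$.

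For the `if' direction, suppose $V$ is a positive volume subvariety of $D$. Theorem \ref{chpwmain}(2) then yields $\oklim_{Y_\bullet}(D)\subseteq \{0\}^{n-\kappanu(D)}\times \R^{\kappanu(D)}$, and the inclusion $\okval_{Y_\bullet}(D)\subseteq \oklim_{Y_\bullet}(D)$ therefore gives $\okval_{Y_\bullet}(D)\subseteq \{0\}^{n-\kappa(D)}\times \R^{\kappa(D)}$. By Theorem \ref{nakpvscrit}(1) this containment is equivalent to $Y_\bullet$ containing a Nakayama subvariety of $D$, and since every such subvariety has dimension $\kappa(D)$, it must coincide with $Y_{n-\kappa(D)}=V$. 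The main subtlety worth highlighting is that without abundance one cannot transport information between $\okval$ and $\oklim$ so cleanly, as the two bodies typically live in coordinate subspaces of different dimensions $\kappa(D)$ and $\kappanu(D)$; abundance is precisely what neutralizes this gap in both directions of the argument.
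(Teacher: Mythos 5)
Your proof is correct. The `if' direction (positive volume implies Nakayama) matches the paper's argument exactly: push the inclusion $\oklim_{Y_\bullet}(D)\subseteq\{0\}^{n-\kappanu(D)}\times\R^{\kappanu(D)}$ down to $\okval_{Y_\bullet}(D)$ via $\okval\subseteq\oklim$ and abundance, then invoke Theorem \ref{nakpvscrit}(1). For the `only if' direction, however, you take a genuinely different and arguably cleaner route. The paper first obtains $\okval_{Y_\bullet}(D)\subseteq\{0\}^{n-\kappa(D)}\times\R^{\kappa(D)}$ from Theorem \ref{nakpvscrit}(1), then uses the dimension chain $\dim\okval_{Y_\bullet}(D)=\kappa(D)=\kappanu(D)=\dim\oklim_{Y_\bullet}(D)$ together with the convexity argument of \cite[Proposition 3.21]{CHPW1} to force the larger body $\oklim_{Y_\bullet}(D)$ into the same coordinate subspace, and finally reads off the positive-volume property via Theorem \ref{nakpvscrit}(2). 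You instead apply Theorem \ref{chpwmain}(1) directly: since $\okval_{Y_\bullet}(D)$ is a $\kappa(D)$-dimensional convex body in $\R^{\kappa(D)}$, its Euclidean volume, which equals $\tfrac{1}{\kappa(D)!}\vol_{X|V}(D)$, is strictly positive; combined with $\vol_{X|V}(D)\leq\vol_{X|V}^+(D)$ (valid since $V\not\subseteq\bm(D)\subseteq\SB(D)$ by Lemma \ref{notinsb}) and $\dim V=\kappa(D)=\kappanu(D)$, the definition of positive volume subvariety is met outright. This sidesteps the appeal to \cite[Proposition 3.21]{CHPW1} entirely, at the modest cost of invoking the volume formula in Theorem \ref{chpwmain}(1) rather than just the criterion of Theorem \ref{nakpvscrit}. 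Both are correct; yours is more self-contained within the stated preliminaries.
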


\begin{proof}
We can always construct an admissible flag $Y_\bullet$ on $X$ containing a given Nakayama subvariety of $D$ or a given positive volume subvariety of $D$. By Lemma \ref{notinsb},  we may assume that $Y_n$ is a general point in $X$.
Let $V\subseteq X$ be a Nakayama subvariety of $D$. By Theorem \ref{nakpvscrit} (1),  $\okval_{Y_\bullet}(D)\subseteq\{0\}^{n-\kappa(D)}\times\R^{\kappa(D)}$ for any admissible flag $Y_\bullet$ containing $V$ such that $Y_n$ is a general point in $X$.
Recall now that  $\okval_{Y_\bullet}(D)\subseteq\oklim_{Y_\bullet}(D)$ and $\dim\okval_{Y_\bullet}(D)=\kappa(D)=\nu_{\BDPP}(D)=\dim\oklim_{Y_\bullet}(D)$. Thus $\oklim_{Y_\bullet}(D)\subseteq\{0\}^{n-\kappa(D)}\times\R^{\kappa(D)}$.
Theorem \ref{nakpvscrit} (2) implies that $V$ is a positive volume subvariety of $D$.

Now, let $V\subseteq X$ be a positive volume subvariety of $D$, and $Y_\bullet$ be an admissible flag containing $V$ such that $Y_n$ is a general point in $X$. By Theorem \ref{nakpvscrit} (2), we have $\okval_{Y_\bullet}(D)\subseteq\oklim_{Y_\bullet}(D)\subseteq\{0\}^{n-\kappa(D)}\times\R^{\kappa(D)}$.
Theorem \ref{nakpvscrit} (1) implies that $V$ is a Nakayama subvariety of $D$.
\end{proof}

\section{Fujita's approximations for Okounkov bodies}\label{altsec}

The aim of this section is to prove some versions of Fujita's approximations for Okounkov bodies, which may be regarded as alternative constructions of valuative and limiting Okounkov bodies (see Lemmas \ref{altconval} and \ref{altconlim}). This will be used in the course of the proofs of Theorems \ref{main1} and \ref{main3}. Throughout the section, $X$ is a smooth projective variety of dimension $n$.

\subsection{Valuative Okounkov body case}\label{altsubsec1}
We fix notations used throughout this subsection.
Let $D$ be a divisor on $X$ with $\kappa(D) > 0$. We do not impose the abundant condition on $D$ in this subsection.
Fix an admissible flag $Y_\bullet$ on $X$ containing a Nakayama subvariety $U$ of $D$ such that $Y_n=\{x\}$ is general in $X$ so that $x \not\in \SB(D)$.
We can regard the valuative Okounkov body $\okval_{Y_\bullet}(D) \subseteq \{0\}^{n-\kappa(D)}\times\R^{\kappa(D)}$ as a subset of $\R^{\kappa(D)}$ (see Theorem \ref{chpwmain}).

\medskip

Now, for a sufficiently large integer $m>0$, we take a log resolution $f_m \colon X_m \to X$ of the base ideal $\frak{b}(\lfloor mD \rfloor)$ so that we obtain a decomposition $f_m^*(\lfloor mD \rfloor)=M_m' + F_m'$ into a base point free divisor $M_m'$ and the fixed part $F_m'$ of $|f_m^*(\lfloor mD \rfloor)|$. Let $M_m:=\frac{1}{m}M_m'$ and $F_m:=\frac{1}{m}F_m'$.
We may assume that $f_m \colon X_m \to X$ is isomorphic over a neighborhood of $x$.
Let $f_m^*D=P_m+N_m$ be the $s$-decomposition.

\medskip

Since $Y_n$ is general, by taking the strict transforms $Y_i^m$  of $Y_i$ on $X_m$, we obtain an admissible flag $Y_{\bullet}^m : Y_0^m \supseteq \cdots \supseteq Y_n^m $ on $X_m$.
We note that  $U_m:=Y_{n-\kappa(D)}^m$ is also a Nakayama subvariety of $f^*_mD$ since $f_m$ is $U$-birational (see \cite[Proposition 2.15]{CHPW1}). By definition, we see that $U_m$ is also a Nakayama subvariety of $M_m$.

\medskip

Let $W_\bullet$ be a graded linear series on $U$ associated to $D|_U$ where $W_k$ is the image of the natural injective map $H^0(X, \lfloor kD \rfloor) \to H^0(U, \lfloor kD \rfloor |_U)$.
We also consider a graded linear series $W_\bullet^m$ on $U_m$ associated to $M_m|_{U_m}$ where $W_k^m$ is the image of the natural injective map $H^0(X_m, \lfloor kM_m \rfloor) \to H^0(U_m, \lfloor kM_m \rfloor|_{U_m})$.
Note that $\dim W_m = \dim W_m^m$.
Let $\phi_m \colon X_m \to Z_m$ be the morphism defined by $|M_m'|$.
Then there is an ample divisor $H_m$ on $Z_m$ such that $\phi_m^*H_m = M_m$.
Note that $\phi_m|_{U_m} \colon U_m \to Z_m$ is a surjective morphism of projective varieties of the same dimension $\kappa(D)$.
Since $Y_n$ is general, we can assume that $\overline{Y}_\bullet^m : Z_m= \phi_m(Y_{n-\kappa(D)}^m)  \supseteq \cdots \supseteq \phi_m(Y_n^m)$ is an admissible flag on $Z_m$.

\medskip

The following lemma is the main result of this subsection.

\begin{lemma}\label{altconval}
Under the same notations as above, we have
$$\okval_{Y_\bullet}(D) = \lim_{m \to \infty} \okval_{Y_{\bullet}^m}(M_m)= \lim_{m \to \infty} \okbd_{\overline{Y}_\bullet^m}(H_m).$$
\end{lemma}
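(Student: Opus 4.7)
The plan is to establish the two equalities in succession. The second equality, $\lim_m\okval_{Y_\bullet^m}(M_m) = \lim_m\okbd_{\overline{Y}_\bullet^m}(H_m)$, will be derived from the stronger per-$m$ identification $\okval_{Y_\bullet^m}(M_m) = \okbd_{\overline{Y}_\bullet^m}(H_m)$, while the first equality, $\okval_{Y_\bullet}(D) = \lim_m\okval_{Y_\bullet^m}(M_m)$, is a Fujita-type approximation and carries the main analytic content.

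For the second equality, I would identify the graded linear series underlying each Okounkov body. Since $M_m' = \phi_m^*H_m'$ with $H_m' = mH_m$ and $\phi_m$ has connected fibers, pullback yields an isomorphism $\phi_m^*\colon H^0(Z_m, kH_m') \xrightarrow{\sim} H^0(X_m, kM_m')$ for every $k \ge 0$. The Nakayama property of $U_m$ makes restriction to $U_m$ injective, so the graded linear series on $U_m$ computing $\okval_{Y_\bullet^m}(M_m)$ is identified via $(\phi_m|_{U_m})^*$ with the complete linear series of $H_m$ on $Z_m$ computing $\okbd_{\overline{Y}_\bullet^m}(H_m)$. Because $\phi_m|_{U_m}\colon U_m \to Z_m$ is generically finite between smooth varieties of the same dimension $\kappa(D)$, the generality of $Y_n$ makes $\phi_m|_{U_m}$ \'etale at $Y_n^m$; hence the flag valuations along $Y_\bullet^m|_{U_m}$ and $\overline{Y}_\bullet^m$ agree under this identification, proving the per-$m$ equality.

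For the first equality, I plan a two-sided argument. Let $s_{F_m'}$ denote the canonical section of $\mathcal{O}_{X_m}(F_m')$. Multiplication by $s_{F_m'}^k$ gives an injection $H^0(X_m, kM_m') \hookrightarrow H^0(X_m, kf_m^*(\lfloor mD\rfloor))$. Using additivity of the flag valuation on effective divisors and normalizing by $km$, this yields
\[
\okval_{Y_\bullet^m}(M_m) + \nu_{Y_\bullet^m}(F_m) \subseteq \okval_{Y_\bullet^m}(f_m^*D) = \okval_{Y_\bullet}(D),
\]
where the last equality is Lemma \ref{okbdbir}. Since $U_m$ is a Nakayama subvariety of both $M_m$ and $f_m^*D$, Theorem \ref{chpwmain} identifies the Euclidean volumes of these two Okounkov bodies with $\tfrac{1}{\kappa(D)!}\vol_{X_m|U_m}(M_m)$ and $\tfrac{1}{\kappa(D)!}\vol_{X|U}(D)$ respectively. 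Classical Fujita's approximation for restricted volumes then gives $\vol_{X_m|U_m}(M_m) \to \vol_{X|U}(D)$, so the translated bodies exhaust $\okval_{Y_\bullet}(D)$ in volume as $m \to \infty$.

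The main obstacle is to show that the translations $\nu_{Y_\bullet^m}(F_m) \in \R_{\ge 0}^{\kappa(D)}$ themselves tend to $0$, so that the untranslated bodies actually converge to $\okval_{Y_\bullet}(D)$. Passing to a subsequence, I would assume $\okval_{Y_\bullet^m}(M_m)$ converges in Hausdorff distance to a convex body $K$; then $K \subseteq \R_{\ge 0}^{\kappa(D)}$ contains the origin (since $M_m$ is base point free, $0 \in \okval_{Y_\bullet^m}(M_m)$), and $K + v = \okval_{Y_\bullet}(D)$ for $v = \lim\nu_{Y_\bullet^m}(F_m) \in \R_{\ge 0}^{\kappa(D)}$ by the inclusion and volume convergence above. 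Because $Y_n$ is general with $Y_n \notin \SB(D)$, the origin lies in $\okval_{Y_\bullet}(D) = K + v$, so $-v \in K \subseteq \R_{\ge 0}^{\kappa(D)}$, forcing $v = 0$ and $K = \okval_{Y_\bullet}(D)$. A standard subsequence argument then upgrades this to convergence of the full sequence.
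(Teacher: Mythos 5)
Your overall strategy for the first equality coincides in outline with the paper's (an inclusion $\okval_{Y_\bullet^m}(M_m)\subseteq\okval_{Y_\bullet}(D)$ plus volume convergence, with convexity forcing Hausdorff convergence), but you arrive at the inclusion by an unnecessarily circuitous route. The translation vector you introduce, $\nu_{Y_\bullet^m}(F_m)$, is in fact \emph{identically zero} for every $m$: since $Y_n=\{x\}$ is general, $x\notin\SB(D)$, so the fixed part $F_m'$ of $|f_m^*\lfloor mD\rfloor|$ does not contain the flag center $Y_n^m$. A flag valuation $\nu_{Y_\bullet^m}(E)$ of an effective divisor $E$ vanishes entirely as soon as $Y_n^m\notin\Supp(E)$: if any $\nu_i$ were positive, the corresponding $Y_i^m$ (hence $Y_n^m$) would lie in the support of the successive restrictions of $E$, a contradiction. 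So the "main obstacle" you identify does not exist, and your subsequence/limit-body compactness machinery, while correct, is doing no work; it establishes $v=0$ when $v_m=0$ already. The paper gets the inclusion $\okval_{Y_\bullet^m}(M_m)\subseteq\okval_{Y_\bullet}(D)$ cleanly from Lemma \ref{okbdzd} (the $s$-decomposition identity $\okval_{Y_\bullet}(D)=\okval_{Y_\bullet^m}(P_m)$) together with $M_m\leq P_m$, and then quotes Lazarsfeld--Musta\c{t}\u{a} Theorems A and D applied to the graded linear series $W_\bullet$, $W_\bullet^m$ on $U$, $U_m$ to get the volume convergence directly, rather than routing through restricted-volume Fujita approximation as you do.

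For the second, per-$m$ equality $\okval_{Y_\bullet^m}(M_m)=\okbd_{\overline{Y}_\bullet^m}(H_m)$, your argument (pullback isomorphism on sections of $\phi_m$ since $\phi_m$ is a contraction, injective restriction to the Nakayama subvariety, \'etale matching of flag valuations at the general center) is a reasonable sketch of what the paper cites as \cite[Remark 3.11]{CHPW1} together with \cite[Lemma 5.1]{CPW2}. You should make explicit, as the paper's setup does, that $\phi_m|_{U_m}\colon U_m\to Z_m$ is surjective of relative dimension zero (so the generically-finite hypothesis in your \'etale argument is actually met) — this uses that $U_m$ is a Nakayama subvariety of $M_m$, which guarantees it is not contracted by $\phi_m$. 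With that caveat, both equalities go through and the proposal is sound, just less efficient than the paper's proof on the first step.
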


\begin{proof}
As we noted above, we treat $\okval_{Y_\bullet}(D),  \okval_{Y_{\bullet}^m}(M_m),$ and $\okbd_{\overline{Y}_\bullet^m}(H_m)$ as the subsets of the same fixed space $\R^{\kappa(D)}$.
By Lemmas \ref{okbdbir} and \ref{okbdzd}, we have
$$
\okval_{Y_\bullet}(D)=\okval_{Y_\bullet^m}(f_m^*D)=\okval_{Y_\bullet^m}(P_m),
$$
and by \cite[Remark 3.11]{CHPW1} and \cite[Lemma 5.1]{CPW2}, we have
$$
\okval_{Y_\bullet^m}(M_m)=\okbd_{Y^m_{n-\kappa(D)\bullet}}(W_\bullet^m)=\okbd_{\overline{Y}_\bullet^m}(H_m).
$$
Note that $\okval_{Y_\bullet^m}(M_m) \subseteq \okval_{Y_\bullet^m}(P_m)$.
By \cite[Remark 3.11]{CHPW1}, we also have
$$
\okval_{Y_\bullet}(D) = \okbd_{Y_{n-\kappa(D)\bullet}}(W_\bullet).
$$
By applying \cite[Remark 2.8, Theorems 2.13 and 3.3]{lm-nobody}, we see that
$$
\vol_{\R^{\kappa(D)}}(\okbd_{Y_{n-\kappa(D)\bullet}}(W_\bullet)) =
\lim_{m \to \infty} \vol_{\R^{\kappa(D)}}(\okbd_{Y^m_{n-\kappa(D)\bullet}}(W_\bullet^m)).
$$
As $\okbd_{Y^m_{n-\kappa(D)\bullet}}(W_\bullet^m) \subseteq \okbd_{Y_{n-\kappa(D)\bullet}}(W_\bullet)$, we obtain
$$
\okbd_{Y_{n-\kappa(D)\bullet}}(W_\bullet) = \lim_{m \to \infty} \okbd_{Y^m_{n-\kappa(D)\bullet}}(W_\bullet^m).
$$
Thus the assertion now follows.
\end{proof}

\begin{remark}
When $D$ is a big divisor, Lemma \ref{altconval} is the same as \cite[Theorem D]{lm-nobody}. See \cite[Remark 3.4]{lm-nobody} for the explanation on how this statement implies the classical statement of Fujita's approximation (see also \cite[Theorem 11.4.4]{pos}). Another version of Fujita's approximation for  effective divisors is stated in \cite[Theorem 1.2]{DP}.
\end{remark}

\subsection{Limiting Okounkov body case}\label{altsubsec2}
We fix notations used throughout this subsection.
Let $D$ be a pseudoeffective abundant divisor on $X$ with $\kappa(D)=\nu_{\BDPP}(D) > 0$.
Fix an admissible flag $Y_\bullet$ on $X$ containing a positive volume subvariety $V$ of $D$ such that $Y_n=\{x \}$ is general in $X$ so that $x \not\in \SB(D)$. By Proposition \ref{nak=psv}, $V$ is also a Nakayama subvariety of $D$.
We can regard the limiting Okounkov body $\oklim_{Y_\bullet}(D)$ in $\{0\}^{n-\kappa(D)}\times\R^{\kappa(D)}$ as a subset of $\R^{\kappa(D)}$ (see Theorem \ref{chpwmain}).

\medskip

Now, for a sufficiently large integer $m>0$, we take a log resolution $f_m \colon X_m \to X$ of the base ideal $\frak{b}(\lfloor mD \rfloor)$ so that we obtain a decomposition $f_m^*(\lfloor mD \rfloor)=M_m' + F_m'$ into a base point free divisor $M_m'$ and the fixed part $F_m'$ of $|f_m^*(\lfloor mD \rfloor)|$. Let $M_m:=\frac{1}{m}M_m'$ and $F_m:=\frac{1}{m}F_m'$.
We may assume that the $f_m \colon X_m \to X$ is isomorphic over a neighborhood of $x$.
Let $f_m^*D=P_m+N_m$ be the divisorial Zariski decomposition. By Theorem \ref{abprop} (2), it is also the $s$-decomposition.

\medskip

Since $Y_n$ is general, by taking the strict transforms $Y_i^m$  of $Y_i$ on $X_m$, we obtain an admissible flag $Y_{\bullet}^m : Y_0^m \supseteq \cdots \supseteq Y_n^m $ on $X_m$.
We note that  $V_m:=Y_{n-\kappa(D)}^m$ is also a positive volume subvariety of $f^*_mD$ since $f_m$ is $V$-birational (\cite[Proposition 2.24]{CHPW1}). By definition, we also see that $V_m$ is also a Nakayama subvariety of $M_m$. Clearly, it is also a positive volume subvariety of $M_m$.

\medskip

The following lemma is obvious (cf. \cite[Lemma II.2.11]{nakayama}).

\begin{lemma}\label{pullbackrdiv}
Let $f \colon X \to Y$ be a surjective morphism with connected fibers between smooth projective varieties, and $D$ be an effective divisor on $Y$. Then $H^0(X, \lfloor f^*(mD) \rfloor) = H^0(Y, \lfloor mD \rfloor)$ for a sufficiently large integer $m>0$.
\end{lemma}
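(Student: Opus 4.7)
The plan is to establish both inclusions between $H^0(Y,\lfloor mD\rfloor)$ and $H^0(X,\lfloor f^*(mD)\rfloor)$ directly, combining $f_*\mathcal{O}_X=\mathcal{O}_Y$ (which follows from connectedness of fibers together with normality of $Y$) with a divisor-theoretic comparison of $\lfloor f^*(mD)\rfloor$ and $f^*\lfloor mD\rfloor$.

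The first step is a bookkeeping computation. Writing $D=\sum_i a_iD_i$ with $D_i$ prime on $Y$ and setting $c_{iE}:=\mult_E(f^*D_i)\in\Z_{\geq 0}$ for a prime $E\subset X$, one finds
$$
\mult_E\bigl(\lfloor f^*(mD)\rfloor-f^*\lfloor mD\rfloor\bigr)=\Bigl\lfloor\sum_i c_{iE}\{ma_i\}\Bigr\rfloor\geq 0,
$$
so $\lfloor f^*(mD)\rfloor\geq f^*\lfloor mD\rfloor$. Combined with the projection formula and $f_*\mathcal{O}_X=\mathcal{O}_Y$, this produces
$$H^0(Y,\lfloor mD\rfloor)=H^0(X,f^*\lfloor mD\rfloor)\hookrightarrow H^0(X,\lfloor f^*(mD)\rfloor).$$

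For the reverse inclusion, I would take $t\in H^0(X,\lfloor f^*(mD)\rfloor)$ and view it as a rational function on $X$ whose pole divisor is dominated by $\lfloor f^*(mD)\rfloor$. Since $\Supp(\lfloor f^*(mD)\rfloor)\subseteq f^{-1}(\Supp(D))$, a general fiber $F=f^{-1}(y)$ with $y\notin\Supp(D)$ is disjoint from the pole locus, so $t|_F$ is a regular function on the connected proper variety $F$, hence constant. Because $t$ is constant on a dense family of connected fibers, it descends to a unique $s\in\C(Y)$ with $t=f^*s$.

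It remains to verify $s\in H^0(Y,\lfloor mD\rfloor)$, i.e., $\ord_{D_i}(s)\geq -\lfloor ma_i\rfloor$ for every prime $D_i$ on $Y$. By surjectivity of $f$ there is a prime $E_i\subset X$ dominating $D_i$; set $c:=\mult_{E_i}(f^*D_i)\geq 1$. Since $E_i$ meets $f^{-1}(D_j)$ only for $j=i$, the condition $\mathrm{div}(t)+\lfloor f^*(mD)\rfloor\geq 0$ evaluated at $E_i$ reduces to $c\cdot\ord_{D_i}(s)+\lfloor ma_ic\rfloor\geq 0$, and integrality of $\ord_{D_i}(s)$ then forces $\ord_{D_i}(s)\geq -\lfloor\lfloor ma_ic\rfloor/c\rfloor\geq -\lfloor ma_i\rfloor$, as required. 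The only delicate point is the descent step $t\mapsto s$, which relies on having a dense family of connected fibers disjoint from the pole locus of $t$; the remainder is elementary floor-function arithmetic. In particular the argument works for every $m\geq 1$, so the ``sufficiently large'' hypothesis is not really needed.
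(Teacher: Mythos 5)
Your proof is correct, and it is a bit more explicit than the paper's one-line argument, which writes $\lfloor f^*(mD)\rfloor = f^*\lfloor mD\rfloor + \lfloor f^*\{mD\}\rfloor$, asserts that every component of $\Supp\lfloor f^*\{mD\}\rfloor$ maps to codimension $\geq 2$ in $Y$, and then concludes $f_*\lfloor f^*(mD)\rfloor=\lfloor mD\rfloor$ by the projection formula. Both arguments rest on the same skeleton ($f_*\mathcal{O}_X=\mathcal{O}_Y$ together with a comparison of $\lfloor f^*(mD)\rfloor$ and $f^*\lfloor mD\rfloor$), so this is not a different route so much as a more careful execution of the reverse inclusion: instead of claiming the residual divisor is $f$-exceptional, you descend sections directly and finish by floor arithmetic.

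That extra care actually pays off. The paper's codimension-two claim is not literally true for $\R$-divisors: if some prime $E\subset X$ dominates $D_i$ with $c:=\mult_E(f^*D_i)\geq 2$ (a multiple-fiber phenomenon), then for $m$ with $\{ma_i\}\geq 1/c$ the divisor $\lfloor f^*\{mD\}\rfloor$ does contain $E$, whose image is a divisor. The conclusion $f_*\lfloor f^*(mD)\rfloor=\lfloor mD\rfloor$ survives anyway, but precisely for the reason you isolate: the integrality of $\ord_{D_i}$ combined with $\lfloor\lfloor cma_i\rfloor/c\rfloor\leq\lfloor ma_i\rfloor$ kills the apparent extra pole order along $D_i$. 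So your argument closes a small gap in the paper's justification, and you are also right that it gives the equality for every $m\geq 1$; the ``sufficiently large'' in the statement is only needed because the paper's exceptionality claim implicitly presumes $\{mD\}$ has been made harmless (e.g.\ $D$ a $\Q$-divisor and $m$ sufficiently divisible, as in the application inside Lemma~\ref{fujita}).

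One minor point: your descent step deserves a half-sentence more. The reason a rational function $t$ constant on a dense family of connected fibers descends is that $\C(Y)$ is algebraically closed in $\C(X)$ when $f$ has connected fibers; equivalently, the rational map $(f,t)\colon X\dashrightarrow Y\times\P^1$ has image birational to $Y$. As written the step is fine, just terse.
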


\begin{proof}
We can write $\lfloor f^*(mD) \rfloor =f^*\lfloor mD \rfloor + \lfloor f^*\{ mD\} \rfloor$. Note that for every irreducible component $E$ of $\Supp\lfloor f^*\{ mD\} \rfloor$, we have codim$f(E) \geq 2$. By the projection formula, we obtain $f_* \lfloor f^*(mD) \rfloor = \lfloor mD \rfloor$, and the assertion follows.
\end{proof}

We now prove a version of Fujita's approximation for an abundant divisor, which is a generalization of \cite[Proposition 3.7]{lehmann-nu}.

\begin{lemma}\label{fujita}
Under the same notations as above, for a sufficiently large integer $m>0$, there exists an ample divisor $H$ on $X$ such that
$$
M_m \leq P_m \leq M_m + \frac{1}{m}f_m^*H.
$$
\end{lemma}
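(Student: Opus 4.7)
I plan to establish the two inequalities separately. The inequality $M_m \leq P_m$ is quick: since $\lfloor mD \rfloor \leq mD$, pullback yields $M_m = M_m'/m \leq f_m^{*}(\lfloor mD \rfloor)/m \leq f_m^{*}D = P_m + N_m$, and because $M_m$ is base point free (hence movable), the maximality characterization of $P_\sigma(f_m^{*}D)$ as the maximal movable divisor dominated by $f_m^{*}D$ (see \cite[Proposition III.1.14]{nakayama}, recalled in Subsection~\ref{zdsubsec}) forces $M_m \leq P_m$.

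For the reverse inequality $P_m \leq M_m + \frac{1}{m}f_m^{*}H$, the strategy is to reduce to the classical Fujita approximation for a big divisor via the Iitaka fibration of $D$. By Theorem~\ref{abprop}~(3), after further birationally modifying $X_m$ if necessary (which is harmless: an ample divisor on a higher model can be absorbed into a suitable ample on $X$), I may assume that $X_m$ admits a surjective morphism $g \colon X_m \to T$ with connected fibers, birationally equivalent to the Iitaka fibration of $D$, together with a big $\Q$-divisor $B$ on $T$ such that $P_m \sim_\Q P_\sigma(g^{*}B)$. By Lemma~\ref{pullbackrdiv} applied to $g$, for $m$ sufficiently divisible the pullback identifies $H^0(T, \lfloor mB \rfloor) \cong H^0(X_m, \lfloor m g^{*}B \rfloor)$, and consequently the mobile/fixed decomposition of $|m g^{*}B|$ on $X_m$ is the $g$-pullback of that of $|mB|$ on $T$.

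The key input will then be a strong pointwise form of Fujita's approximation for the big divisor $B$: there exists a fixed ample divisor $H_T$ on $T$ such that, for all sufficiently divisible $m$, on a log resolution $\pi \colon T' \to T$ of $\mathfrak{b}(\lfloor mB \rfloor)$ with mobile/fixed decomposition $\pi^{*}\lfloor mB \rfloor = A + G$, one has the pointwise bound $m\, P_\sigma(\pi^{*}B) \leq A + \pi^{*}H_T$. This will follow from a Kodaira-type decomposition $B \sim_\Q \epsilon A_0 + E_0$ with $A_0$ ample, combined with uniform effective global generation for ample line bundles, both of which control the convergence rate of $\mathrm{ord}_E(\mathfrak{b}(\lfloor mB \rfloor))/m$ to $\sigma_E(B)$ uniformly in the prime divisor $E$ on birational models of $T$. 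Pulling this estimate back via $g$ on a common resolution and using the $\Q$-equivalence $P_m \sim_\Q P_\sigma(g^{*}B)$, I obtain $m P_m - M_m' \leq g^{*}\pi^{*}H_T + (\text{exceptional corrections})$; these terms can then be dominated by $f_m^{*}H$ for a single ample $H$ on $X$ chosen large enough to absorb $g^{*}\pi^{*}H_T$ and the exceptional discrepancies, and depending only on the fibration data (not on $m$).

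The principal obstacle is the strong pointwise form of Fujita's approximation above — namely, bounding the gap between the level-$m$ fixed part of $|mB|$ and $m\, N_\sigma(B)$ by a fixed ample, uniformly in $m$ and uniformly across infinitely many birational models of $T$. A secondary technical point is the careful bookkeeping of $f_m$-exceptional corrections and $\Q$-equivalence discrepancies when transferring the estimate from $T$ back to $X_m$, to ensure that a single ample $H$ on $X$ suffices for all large $m$.
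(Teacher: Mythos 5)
Your plan follows essentially the same route as the paper: $M_m\leq P_m$ via the maximality of $P_\sigma$ among movable divisors dominated by $f_m^*D$, and the second inequality by reducing through Theorem~\ref{abprop}~(3) and Lemma~\ref{pullbackrdiv} to a uniform Fujita-type estimate for the big divisor $B$ on the Iitaka base $T$, then pulling back. But what you label the ``principal obstacle'' is a genuine gap rather than a detail: the strong pointwise Fujita approximation for $B$ --- that there is a fixed effective divisor $E'$ on $T$ with $M_m''\leq P_\sigma(h_m^*B)\leq M_m''+\tfrac{1}{m}h_m^*E'$ for all large $m$ --- is not established by what you write. Your sketch via a Kodaira decomposition $B\sim_{\Q}\epsilon A_0+E_0$ together with ``uniform effective global generation'' is too coarse to yield the pointwise bound uniformly over $m$ and over all prime divisors on all birational models. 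The actual ingredient is \cite[Proposition 3.7]{lehmann-nu}, whose proof runs through the asymptotic multiplier ideal $\mathcal{J}(||mB||)$ and Nadel vanishing; this is exactly why the paper takes the log resolution $h_m$ of $T$ to resolve not only $\frak{b}(\lfloor mB\rfloor)$ but also $\mathcal{J}(||mB||)$. You should cite Lehmann's result (noting, as the paper does, that the pointwise form appears in his proof) rather than assert it follows from global generation for ample line bundles.

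The bookkeeping you defer to a ``secondary technical point'' is also more substantial than a remark. The paper works on a separate model $W$ carrying the contraction $g\colon W\to T$, proves the compatibility claim $M_m^W\sim_{\Q}g_m^*M_m''$ by comparing global sections through Lemma~\ref{pullbackrdiv} together with base-point-freeness of both sides (equality of $H^0$'s alone does not identify the mobile parts, contrary to what your ``consequently'' suggests), passes to a tower $T_{m'}\to T_m$, $X_{m'}^W\to X_m^W$ and takes a limit over $m'$ to recover $P_m^W$ from the pushforwards of $M_{m'}^W$, and only at the end transfers the inequalities to $X_m$ through a common resolution $Z$ and pushforward via $q\colon Z\to X_m$. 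Replacing $X_m$ by a higher model that admits $g$ directly, as you propose, can be made to work, but you still need the $H^0$-comparison to show the mobile part of $|m\,g^*B|$ agrees with $M_m$ up to $\Q$-linear equivalence, and you need to verify that pushing the resulting inequalities back down to the original $X_m$ preserves them; neither step is automatic, and both are carried out explicitly in the paper.
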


\begin{proof}
By Theorem \ref{thrm-abundant map}, we can take a birational morphism $\mu \colon W \to X$ with $W$ smooth and a contraction $g \colon W \to T$ such that for some big divisor $B$ on $T$, we have $P' \sim_{\Q} P''$ where $\mu^*D = P'+N'$ and $g^*B = P''+N''$ are the divisorial Zariski decompositions. By taking further blow-ups of $T$, we may assume that $T$ is smooth.
For any sufficiently large integer $m >0$, as in \cite[Proof of Proposition 3.7]{lehmann-nu}, we consider a log resolution of $h_m \colon T_m \to T$ of the base ideal $\frak{b}(\lfloor mB \rfloor)$ and the asymptotic multiplier ideal $\mathcal{J}(||mB||)$ so that we obtain a decomposition $h_m^*(\lfloor mB \rfloor)=M_m''' + F_m'''$ into a base point free divisor $M_m'''$ and the fixed part $F_m'''$ of $|h_m^*(\lfloor mB \rfloor)|$. Let $M_m'' := \frac{1}{m}M_m'''$ and $F_m'':=\frac{1}{m}F_m'''$.
Now, for a sufficiently large integer $m>0$, we take a log resolution $f_m^W \colon X_m^W \to W$ of the base ideal $\frak{b}(\lfloor m\mu^*D \rfloor)$ so that we obtain a decomposition $(f_m^W)^*(\lfloor m\mu^*D \rfloor)={M_m^W}' + {F_m^W}'$ into a base point free divisor ${M_m^W} '$ and the fixed part ${F_m^W} '$ of $|(f_m^W)^*(\lfloor m\mu^*D \rfloor)|$. Let $M_m^W:=\frac{1}{m}{M_m^W }'$ and $F_m^W:=\frac{1}{m}{F_m^W} '$. Note that for a sufficiently large $m'>m$, we may take  birational morphisms $h_{m',m} \colon T_{m'}  \to T_m$ and $f_{m',m}^W \colon X_{m'}^W \to X_m^W$.
We can assume that there are contractions $g_m \colon X_m^W \to T_m$ for sufficiently large integers $m>0$. Thus we have the following commutative diagram:
\[
\begin{split}
\xymatrix{
X_{m'}^W \ar[r]^{f_{m',m}^W} \ar[d]_{g_{m'}} & X_m^W \ar[r]^{f_m^W}  \ar[d]_{g_m} & W \ar[r]^{\mu} \ar[d]_{g} & X\\
T_{m'} \ar[r]_{h_{m',m}} & T_m \ar[r]_{h_m} & T .&
}
\end{split}
\]

We now claim that
\begin{equation}\label{fujitaclaim}
M_m^W \sim_{\Q} g_m^* M_m''~\text{ for any sufficiently large and divisible integer $m > 0$}.
\end{equation}
 We can assume that $D$ itself is an effective divisor. By applying Lemma \ref{pullbackrdiv}, we obtain
 $$
\begin{array}{l}
H^0(X, \lfloor mD \rfloor) = H^0(W, \lfloor \mu^*(mD) \rfloor) = H^0(W, \lfloor mP' \rfloor)\\
= H^0(W, \lfloor mP'' \rfloor) = H^0(W, \lfloor g^*(mB) \rfloor) = H^0(T, \lfloor mB \rfloor).
\end{array}
$$
We then have
\begin{small}
$$
H^0(X_m^W, mM_m^W) =  H^0(X, \lfloor mD \rfloor) = H^0(T, \lfloor mB \rfloor)= H^0(T_m, mM_m'')=H^0(X_m^W, g_m^*(mM_m'')).
$$
\end{small}\\[-16pt]
Note that $M_m^W \leq (f_m^W)^*P' \sim_{\Q} (f_m^W)^*P'' \geq g_m^* M_m''$ and
\begin{small}
$$
H^0(X_m^W, mM_m^W) = H^0(X_m^W, \lfloor m(f_m^W)^*P'\rfloor ) = H^0(X_m^W, \lfloor m(f_m^W)^*P'' \rfloor) = H^0(X_m^W, g_m^*(mM_m'')).
$$
\end{small}\\[-16pt]
Since $mM_m^W, g_m^*(m M_m'')$ are base point free, we obtain $M_m^W \sim_{\Q} g_m^* M_m''$ as desired.

Let $h_m^*B = P_m' + N_m'$ be the divisorial Zariski decomposition.
By \cite[Proposition 3.7]{lehmann-nu}, there exists an effective divisor $E'$ on $T$ such that $M_m'' \leq P_m' \leq M_m'' + \frac{1}{m}{h'}_{m}^*E'$.
(Even though this assertion is slightly different from the actual statement of \cite[Proposition 3.7]{lehmann-nu}, Lehmann actually proved this assertion in its proof.)
Thus we have
$$
{h}^*_{m',m} M_m'' \leq M_{m'}'' \leq P_{m'}' \leq {h}^*_{m',m}P_m' \leq {h}^*_{m',m} M_m'' + \frac{1}{m} {h}_{m',m}^*{h}_m^* E'
= {h}^*_{m',m} M_m'' + \frac{1}{m} {h}_{m'}^* E'
$$
so that $0 \leq M_{m'}'' - {h}^*_{m',m} M_m'' \leq \frac{1}{m} {h}^*_{m'}E'$.
Let $E:= g^* E'$. By taking pullback via $g_{m'}$ and by applying the claim (\ref{fujitaclaim}), we obtain
$0 \leq M_{m'}^W - (f_{m',m}^W)^* M_m^W \leq \frac{1}{m} (f_{m'}^W)^* E$. By taking pushforward via $f_{m',m}^W$, we then have $0 \leq f_{m',m *}^W M_{m'}^W - M_m^W \leq \frac{1}{m} (f_m^W)^* E$.
Let $(f_m^W)^* \mu^* D = P_m^W + N_m^W$ be the divisorial Zariski decomposition, which is also the $s$-decomposition by Theorem \ref{abprop} (2).
By definition of $s$-decomposition, $\displaystyle P_m^W = \lim_{m' \to \infty} f_{m',m*}^W M_{m'}^W$. Hence we obtain $0 \leq P_m^W - M_m^W \leq \frac{1}{m}(f_m^W)^*E$.
We can take an ample divisor $H$ on $X$ such that $\mu^*H \geq E$. Then we have
\begin{equation}\label{fujitaeq}
M_m^W \leq P_m^W \leq M_m^W + \frac{1}{m}(f_m^W)^* \mu^* H.
\end{equation}

To finish the proof, consider a common log resolution $f_m' \colon Z \to X$ of $\mu \circ f_m^W \colon X_m^W \to X$ and the log resolution $f_m \colon X_m \to X$ of $\frak{b}(\lfloor mD \rfloor)$ with the morphisms $p \colon Z \to X_m^W$ and $q \colon Z \to X_m$. Note that $M_m^Z:=p^* M_m^W = q^* M_m$ is also a base point free divisor. Let $(f_m')^* D = P_m^Z + N_m^Z$ be the divisorial Zariski decomposition. It is clear that $M_m^Z \leq P_m^Z$. On the other hand, since $P_m^Z \leq p^* P_m^W$, it follows from (\ref{fujitaeq}) that $P_m^Z \leq M_m^Z + \frac{1}{m} (f_m')^* H$. Notice that $P_m = q_* P_m^Z$. Thus by taking pushforward via $q$, we finally obtain
$$
M_m \leq P_m \leq M_m + \frac{1}{m}f_m^*H.
$$
This completes the proof.
\end{proof}

\begin{remark}
When $D$ is a big divisor, one can easily deduce the classical statement of Fujita's approximation (see e.g., \cite[Theorem 11.4.4]{pos}) from Lemma \ref{fujita}.
\end{remark}

The following is the main result of this subsection. This generalizes \cite[Theorem D]{lm-nobody} to the limiting Okounkov body case.

\begin{lemma}\label{altconlim}
Under the same notations as above, we have
$$\oklim_{Y_\bullet}(D)=\lim_{m \to \infty} \okbd_{Y_{n-\nu_{\BDPP}(D)\bullet}^m}(M_m|_{V_m}).$$
\end{lemma}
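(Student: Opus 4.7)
My plan is to follow the three-step template of the proof of Lemma \ref{altconval}: (a) reduce $\oklim_{Y_\bullet}(D)$ to $\oklim_{Y_\bullet^m}(P_m)$ on the log resolution $X_m$; (b) use Lemma \ref{fujita} to replace $P_m$ by the base point free divisor $M_m$ up to a perturbation that vanishes as $m \to \infty$; (c) identify $\oklim_{Y_\bullet^m}(M_m)$ with the classical Okounkov body $\okbd_{Y^m_{n-\kappanu(D)\bullet}}(M_m|_{V_m})$ of a big and nef divisor on the positive volume subvariety $V_m$.

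For step (a), I would invoke Lemma \ref{okbdbir} to obtain $\oklim_{Y_\bullet}(D) = \oklim_{Y_\bullet^m}(f_m^*D)$. By Theorem \ref{abprop}(2) the divisorial Zariski decomposition $f_m^*D = P_m + N_m$ coincides with the $s$-decomposition, and since $Y_n^m$ is general and Lemma \ref{notinsb} puts it outside $\Supp(N_m)$, Lemma \ref{okbdzd} yields $\oklim_{Y_\bullet^m}(f_m^*D) = \oklim_{Y_\bullet^m}(P_m)$. For step (b), Lemma \ref{fujita} produces a fixed ample divisor $H$ on $X$ with $M_m \leq P_m \leq M_m + \tfrac{1}{m}f_m^*H$. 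Since $P_m - M_m$ is pseudoeffective and $Y_n^m$ is general (hence outside $\bm(P_m - M_m)$), I would have $0 \in \oklim_{Y_\bullet^m}(P_m - M_m)$, and Minkowski-sum superadditivity of limiting Okounkov bodies then gives the inclusion $\oklim_{Y_\bullet^m}(M_m) \subseteq \oklim_{Y_\bullet^m}(P_m)$. To upgrade this to asymptotic equality, I would compare Euclidean volumes via Theorem \ref{chpwmain}(2): the outer body has volume $\tfrac{1}{\kappanu(D)!}\vol^+_{X|V}(D)$, while the inner body has volume $\tfrac{1}{\kappanu(D)!}(M_m|_{V_m})^{\kappanu(D)}$ (using that $M_m$ is nef with $V_m \not\subseteq \bp(M_m)$, because $\bp(M_m)$ is a union of $\phi_m$-fibers whereas $\phi_m|_{V_m} \colon V_m \to Z_m$ is a surjective morphism of varieties of the same dimension $\kappanu(D)$). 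Restricting the Fujita sandwich to $V_m$ and taking top self-intersections then forces $(M_m|_{V_m})^{\kappanu(D)} \to \vol^+_{X|V}(D)$ as $m \to \infty$, so the nested pair of convex bodies collapses to a common limit in Hausdorff distance.

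For step (c), I would apply the iterated slicing theorem \cite[Theorem 4.24]{lm-nobody} to the big divisor $M_m + \epsilon A$ with $A$ ample on $X_m$ and $\epsilon > 0$. After checking that each intermediate flag element $Y_i^m$ for $1 \leq i \leq n - \kappanu(D)$ avoids the augmented base locus of the corresponding successive restriction of $M_m + \epsilon A$, iterated slicing produces
\[
\okbd_{Y_\bullet^m}(M_m + \epsilon A)\cap \bigl(\{0\}^{n-\kappanu(D)} \times \R^{\kappanu(D)}\bigr) = \okbd_{Y^m_{n-\kappanu(D)\bullet}}\bigl((M_m+\epsilon A)|_{V_m}\bigr).
\]
Letting $\epsilon \to 0+$ on both sides and invoking continuity of Okounkov bodies on the big cone over $V_m$ would give the required identification $\oklim_{Y_\bullet^m}(M_m) = \okbd_{Y^m_{n-\kappanu(D)\bullet}}(M_m|_{V_m})$. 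Composing with (a) and (b) concludes the proof.

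The main obstacle, I expect, will be verifying the iterated slicing in step (c), since the codimension $n - \kappanu(D)$ is typically positive and at each level one must ensure the next flag element avoids the augmented base locus of the intermediate restriction of $M_m+\epsilon A$. A secondary technical concern is that $X_m$ and $V_m$ vary with $m$, so the Hausdorff convergence of step (b) must be interpreted after identifying all bodies in the common target $\R^{\kappanu(D)}$ via projection onto the last $\kappanu(D)$ coordinates, as justified by Theorem \ref{chpwmain}(2).
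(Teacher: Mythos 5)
Your steps (a) and the overall architecture — resolve, pass to the divisorial Zariski decomposition, apply the Fujita sandwich of Lemma \ref{fujita}, and then identify the limiting Okounkov body on $X_m$ with a classical Okounkov body on $V_m$ — do track the paper's proof. But step (b) has a genuine gap, and step (c) takes a different (and harder) road than the paper actually uses.

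The gap in step (b): you assert that restricting $M_m \leq P_m \leq M_m + \tfrac{1}{m}f_m^*H$ to $V_m$ and taking top self-intersections ``forces $(M_m|_{V_m})^{\kappanu(D)} \to \vol_{X|V}^+(D)$.'' The Minkowski-sum superadditivity you invoke gives you the numerical sandwich $(M_m|_{V_m})^{\kappanu(D)} \leq \vol_{X|V}^+(D) \leq \bigl(M_m|_{V_m} + \tfrac{1}{m}(f_m^*H)|_{V_m}\bigr)^{\kappanu(D)}$, but that does not by itself collapse as $m \to \infty$: expanding the right-hand side produces cross-terms $\tfrac{1}{m^{\kappanu(D)-k}}(M_m|_{V_m})^k \cdot \bigl((f_m^*H)|_{V_m}\bigr)^{\kappanu(D)-k}$ in which $M_m$, $V_m$, and even $X_m$ all vary with $m$, so it is not automatic that these terms go to zero. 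The paper handles exactly this point: it first bounds the mixed degree-one term by $D|_V \cdot (H|_V)^{\kappanu(D)-1}$ using $M_m|_{V_m} \leq (f_m^*D)|_{V_m}$, and then bounds the higher cross-terms by a Khovanskii--Teissier inequality (\cite[Corollary 1.6.3 (i)]{pos}) to get an $m$-independent constant $C_k$. Without this estimate your volume convergence is unjustified.

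Step (c) is where you depart from the paper: the paper simply cites \cite[Lemma 5.5]{CPW2} to get $\oklim_{Y_\bullet^m}(M_m) = \oklim_{Y^m_{n-\kappanu(D)\bullet}}(M_m|_{V_m})$ at one stroke, whereas you propose to re-derive this by iterated slicing of $\okbd_{Y_\bullet^m}(M_m + \epsilon A)$ and a subsequent $\epsilon \to 0$ limit. That route would require verifying, at every level $1 \leq i \leq n - \kappanu(D)$, that the next flag member avoids the augmented base locus of the intermediate restriction, plus a continuity argument in $\epsilon$ — all work that the cited lemma already packages. You flag this yourself as the main obstacle; it is real, but it is also avoidable by using the available result as the paper does.
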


\begin{proof}
We treat $\oklim_{Y_\bullet}(D)$ and $\okbd_{Y_{n-\nu_{\BDPP}(D)\bullet}^m}(M_m|_{V_m})$ in the statement as the subsets of the same fixed space $\R^{\nu_{\BDPP}(D)}$.
For any sufficiently large $m'>0$, by Lemmas \ref{okbdbir} and \ref{okbdzd}, we have
$$
\oklim_{Y_\bullet}(D)=\oklim_{Y_\bullet^{m'}}(f_{m'}^*D)=\oklim_{Y_\bullet^{m'}}(P_{m'}).
$$
Thus $\oklim_{Y_\bullet^{m'}}(P_{m'})$ is independent of ${m'}$.
By  \cite[Lemma 5.5]{CPW2}, for any $m>0$, we have
$$
\oklim_{Y_\bullet^m}(M_m)=\oklim_{Y_{n-\nu_{\BDPP}(D)\bullet}^m}(M_m|_{V_m}).
$$
To prove the lemma, it is sufficient to verify $\displaystyle \oklim_{Y_\bullet^{m'}}(P_{m'})=\lim_{m\to\infty}\oklim_{Y_\bullet^m}(M_m)$.

By Lemma \ref{fujita}, for any sufficiently large integer $m>0$, we have
$$
M_m \leq P_m \leq M_m + \frac{1}{m}f_m^*H
$$
for some ample divisor $H$ on $X$. Since $x \in X$ is general, we may assume $x \not\in \Supp(H)$.
By the subadditivity property of limiting Okounkov bodies,
$$
\oklim_{Y_\bullet^m}(P_m-M_m) + \oklim_{Y_\bullet^m}(\frac{1}{m}f_m^*H +M_m - P_m) \subseteq \oklim_{Y_\bullet^m}( \frac{1}{m}f_m^*H)=\frac{1}{m} \okbd_{Y_{\bullet}}(H).
$$
Since $\displaystyle \lim_{m \to \infty} \frac{1}{m} \okbd_{Y_{\bullet}}(H) = \{ 0 \}$, it follows that
$$
\lim_{m \to \infty} \oklim_{Y_\bullet^m}(P_m-M_m) =  \lim_{m \to \infty} \oklim_{Y_\bullet^m}\left( \frac{1}{m}f_m^*H +M_m - P_m \right) = \{ 0 \}.
$$
By the subadditivity property of limiting Okounkov bodies,
$$
\begin{array}{l}
\oklim_{Y_\bullet^m}(M_m)+\oklim_{Y_\bullet^m}(P_m - M_m)\subseteq\oklim_{Y_\bullet^m}(P_m)\\
\oklim_{Y_\bullet^m}(P_m) + \oklim_{Y_\bullet^m}( \frac{1}{m}f_m^*H + M_m -P_m) \subseteq \oklim_{Y_\bullet^m}(M_m + \frac{1}{m}f_m^*H).
\end{array}
$$
Since $\oklim_{Y_\bullet^m}(P_m) \subseteq \R^{\nu_{\BDPP}(D)}$ and $Y^m_{n-\nu_{\BDPP}(D)} \not\subseteq \bp(M_m + \frac{1}{m}f_m^*H)$, it follows from Lemma \ref{fujita} and \cite[Theorem 1.1]{CPW2} that
$$
\lim_{m \to \infty} \oklim_{Y_\bullet^m}(M_m) \subseteq \lim_{m \to \infty}\oklim_{Y_\bullet^m}(P_m) \subseteq \lim_{m \to \infty} \okbd_{Y^m_{n-\nu_{\BDPP}(D) \bullet}}\left( M_m + \frac{1}{m}f_m^*H \right).
$$
The existence of the limits is guaranteed by the following claim:
\begin{equation}\label{*}
\lim_{m \to \infty} \vol_{\R^{\nu_{\BDPP}(D)}}(\oklim_{Y_\bullet^m}(M_m)) = \lim_{m \to \infty} \vol_{\R^{\nu_{\BDPP}(D)}}\left( \okbd_{Y^m_{n-\nu_{\BDPP}(D) \bullet}}\left( M_m + \frac{1}{m}f_m^*H \right) \right).
\end{equation}
If this claim (\ref{*}) holds, then
$$
\lim_{m \to \infty} \oklim_{Y_\bullet^m}(M_m) = \lim_{m \to \infty}\oklim_{Y_\bullet^m}(P_m)=\lim_{m \to \infty} \okbd_{Y^m_{n-\nu_{\BDPP}(D) \bullet}}\left( M_m + \frac{1}{m}f_m^*H \right).
$$
As we saw in the beginning of the proof, $\oklim_{Y_\bullet^m}(P_m)$ coincide with $\oklim_{Y_\bullet}(D)$ for all sufficiently large $m>0$. Thus we have
$$
\oklim_{Y_\bullet}(D) = \lim_{m \to \infty}\oklim_{Y_\bullet^m}(M_m).
$$

It now remains to prove the claim (\ref{*}). We may assume that $V_m:=Y_{n-\nu_{\BDPP}(D)}^m$ is a smooth positive volume subvariety of $M_m$, and $f_m|_{V_m} \colon V_m \to V$ is a birational contraction. By \cite[Lemma 5.5]{CPW2}, we have
$$
\vol_{\R^{\nu_{\BDPP}(D)}}(\oklim_{Y_\bullet^m}(M_m)) = \frac{1}{\nu_{\BDPP}(D)!} \vol_{V_m}(M_m|_{V_m}) = \frac{1}{\nu_{\BDPP}(D)!} (M_m|_{V_m})^{\nu_{\BDPP}(D)}.
$$
Similarly, by \cite[(2.7) in p.804]{lm-nobody}, we also have
\begin{small}
 $$
 \begin{array}{l}
\vol_{\R^{\nu_{\BDPP}(D)}}\left( \okbd_{Y^m_{n-\nu_{\BDPP}(D) \bullet}}\left( M_m|_{V_m} + \frac{1}{m}(f_m^*H)|_{V_m} \right) \right)\\
=\frac{1}{\nu_{\BDPP}(D)!} \vol_{V_m}( M_m|_{V_m} + \frac{1}{m}(f_m^*H)_{V_m})\\
=\frac{1}{\nu_{\BDPP}(D)!}\left( M_m|_{V_m} + \frac{1}{m}(f_m^* H)|_{V_m} \right)^{\nu_{\BDPP}(D)}\\
=\frac{1}{\nu_{\BDPP}(D)!}  \left( (M_m|_{V_m})^{\nu_{\BDPP}(D)} + \sum_{k=0}^{\nu_{\BDPP}(D)-1}  \frac{{\nu_{\BDPP}(D) \choose k}}{m^{\nu_{\BDPP}(D)-k}}(M_m|_{V_m})^k \cdot ((f_m^* H)|_{V_m})^{\nu_{\BDPP}(D)-k} \right).
\end{array}
$$
\end{small}\\[-7pt]
To prove the claim  (\ref{*}), it is sufficient to show that for each $0 \leq k \leq \nu_{\BDPP}(D)-1$, there exists a constant $C_k$ independent of $m$ such that
$$
(M_m|_{V_m})^k \cdot ((f_m^* H)|_{V_m})^{\nu_{\BDPP}(D)-k}\leq C_k.
$$
If $k=0$, then we have $((f_m^* H)|_{V_m})^{\nu_{\BDPP}(D)} = ((f_m|_{V_m})^*(H|_{V}))^{\nu_{\BDPP}(D)}=(H|_{V})^{\nu_{\BDPP}(D)}$, which is independent of $m$. Now, suppose that $1 \leq k \leq \nu_{\BDPP}(D)-1$.
Note that $V_m \not\subseteq \SB(f_m^*D)$ and $M_m|_{V_m} \leq (f_m^*D)|_{V_m}$. Thus
$$
M_m|_{V_m} \cdot ((f_m^*H)|_{V_m})^{\nu_{\BDPP}(D)-1} \leq (f_m^*D)|_{V_m} \cdot ((f_m^*H)|_{V_m})^{\nu_{\BDPP}(D)-1} = D|_{V} \cdot (H|_{V})^{\nu_{\BDPP}(D)-1}.
$$
By a Hodge-type inequality \cite[Corollary 1.6.3 (i)]{pos}, we have
\begin{footnotesize}
 $$
(M_m|_{V_m})^k \cdot ((f_m^*H)|_{V_m})^{\nu_{\BDPP}(D)-k} \leq \frac{(M_m|_{V_m} \cdot ((f_m^*H)|_{V_m})^{\nu_{\BDPP}(D)-1})^k}{ ((f_m^* H)|_{V_m})^{\nu_{\BDPP}(D)})^{k-1}} \leq  \frac{(D|_{V} \cdot (H|_{V})^{\nu_{\BDPP}(D)-1})^k}{((H|_{V})^{\nu_{\BDPP}(D)})^{k-1}}.
$$
\end{footnotesize}\\[-13pt]
Note that the right hand side is independent of $m$. This proves the claim (\ref{*}). We complete the proof.
\end{proof}

\section{Numerical equivalence and Okounkov body}\label{jowsec}

In this section, we prove Theorem \ref{main1} as Corollary \ref{numeq=same}. Throughout the section, $X$ is a smooth projective variety of dimension $n$. First, we need the following lemma.

\begin{lemma}\label{valbimor}
Let $f \colon \widetilde{X} \to X$ be a birational morphism with $\widetilde{X}$ smooth, and $D$ be a divisor on $X$ with $\kappa(D) \geq 0$. Consider an admissible flag
$$
\widetilde{Y}_\bullet: \widetilde{X}=\widetilde{Y}_0 \supseteq \widetilde{Y}_1 \supseteq \cdots \supseteq \widetilde{Y}_{n-1} \supseteq \widetilde{Y}_n=\{ x' \}
$$
on $\widetilde{X}$ and an admissible flag
$$
Y_\bullet: X=Y_0 \supseteq Y_1 \supseteq \cdots \supseteq Y_{n-1} \supseteq Y_n=\{ x \}
$$
on $X$ such that each restriction $f|_{\widetilde{Y}_{i}} \colon \widetilde{Y}_{i} \to Y_{i}$ is a birational morphism for $0 \leq i \leq n$.
Assume that $Y_i$ and $\widetilde{Y}_i$ are smooth for $0 \leq i \leq n$.
For $1 \leq i \leq n$, write $f|_{\widetilde{Y}_{i-1}}^*Y_i = \widetilde{Y}_i + E_i$ for some effective $f|_{\widetilde{Y}_{i-1}}$-exceptional divisor $E_i$ on $\widetilde{Y}_{i-1}$.
Then we have
$$
\okval_{\widetilde{Y}_\bullet}(f^*D)=\left\{ \mathbf{x}+ \sum_{i=1}^{n-1} x_i \cdot \nu_{\widetilde{Y}_{i\bullet}}(E_i|_{\widetilde{Y}_i}) ~\left|\;~ \mathbf{x}=(x_1, \ldots, x_n) \in \okval_{Y_\bullet}(D)  \right.\right \}
$$
where we regard $\nu_{\widetilde{Y}_{i\bullet}}(E_i|_{\widetilde{Y}_i})$ as a point in $\{ 0 \}^i \times \R^{n-i} \subseteq \R^n$.
In particular, $\okval_{Y_\bullet}(D)$ and $\okval_{\widetilde{Y}_\bullet}(f^*D)$ determine each other.
\end{lemma}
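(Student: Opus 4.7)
The plan is to prove the identity at the level of the valuation function $\nu_{\widetilde{Y}_\bullet}$ on $|f^*D|_{\R}$ and then pass to closed convex hulls. As a preliminary, pullback gives a bijection $f^*\colon |D|_{\R} \to |f^*D|_{\R}$: given $\widetilde{D}'\in|f^*D|_{\R}$, the divisor $N:= \widetilde{D}' - f^*f_*\widetilde{D}'$ is $f$-exceptional, $\R$-Cartier, and $\sim_{\R}0$, so the negativity lemma applied to $\pm N$ forces $N=0$. Thus it suffices to show that for every $D'\in|D|_{\R}$ with $\nu_{Y_\bullet}(D')=(x_1,\dots,x_n)$,
\[
\nu_{\widetilde{Y}_\bullet}(f^*D') \;=\; \nu_{Y_\bullet}(D') \;+\; \sum_{i=1}^{n-1} x_i\cdot \nu_{\widetilde{Y}_{i\bullet}}(E_i|_{\widetilde{Y}_i}) \quad\text{in } \R^n,
\]
where each $\nu_{\widetilde{Y}_{i\bullet}}(E_i|_{\widetilde{Y}_i})\in\R^{n-i}$ is viewed in $\R^n$ via the inclusion $\{0\}^i\times\R^{n-i}$. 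Once this identity is known at every point, the formula for the Okounkov bodies follows by taking closed convex hulls.

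This identity is proved by induction on $n=\dim X$. The case $n=1$ is trivial since the flags collapse to points and a birational morphism of smooth curves is an isomorphism. For the inductive step, decompose
\[
f^*D' \;=\; x_1 \widetilde{Y}_1 + x_1 E_1 + f^*(D'-x_1 Y_1).
\]
Because $E_1$ is $f$-exceptional while $\widetilde{Y}_1$ is birational onto $Y_1$, no component of $E_1$ equals $\widetilde{Y}_1$; and $D'-x_1 Y_1$ does not contain $Y_1$ by definition of $x_1$. Hence $\widetilde{\nu}_1(f^*D')=x_1$, which matches the first coordinate of the claimed formula (all $\nu_{\widetilde{Y}_{i\bullet}}(E_i|_{\widetilde{Y}_i})$ for $i\ge 1$ vanish in coordinate $1$). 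Restricting $f^*D'-x_1\widetilde{Y}_1 = f^*(D'-x_1 Y_1)+x_1 E_1$ to $\widetilde{Y}_1$ gives
\[
(f|_{\widetilde{Y}_1})^* D^{(1)} + x_1 \, E_1|_{\widetilde{Y}_1}, \quad \text{where } D^{(1)}=(D'-x_1Y_1)|_{Y_1} \text{ has } \nu_{Y_{1\bullet}}(D^{(1)})=(x_2,\dots,x_n).
\]
By additivity of $\nu_{\widetilde{Y}_{1\bullet}}$ and the inductive hypothesis applied to the birational morphism $f|_{\widetilde{Y}_1}\colon\widetilde{Y}_1\to Y_1$ (with its induced flags and the same exceptional divisors $E_2,\dots,E_{n-1}$), the last $n-1$ coordinates of $\nu_{\widetilde{Y}_\bullet}(f^*D')$ coincide with the right-hand side in those coordinates, completing the induction.

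The resulting map $T\colon\R^n\to\R^n$, $T(\mathbf{x})=\mathbf{x}+\sum_{i=1}^{n-1}x_i\,\nu_{\widetilde{Y}_{i\bullet}}(E_i|_{\widetilde{Y}_i})$, is linear with lower-triangular matrix having $1$'s on the diagonal (since the $i$-th summand lies in $\{0\}^i\times\R^{n-i}$), hence continuous and invertible. Taking closures of convex hulls and using the bijection $f^*\colon|D|_{\R}\to|f^*D|_{\R}$, we conclude $\okval_{\widetilde{Y}_\bullet}(f^*D)=T(\okval_{Y_\bullet}(D))$, which is the formula in the statement; invertibility of $T$ gives the final assertion that the two bodies determine each other. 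The main obstacle is the careful index bookkeeping in the induction: one must verify that the $\R^{n-1}$-valued inductive output on $\widetilde{Y}_1$, in which the terms $\nu_{\widetilde{Y}_{i\bullet}}(E_i|_{\widetilde{Y}_i})$ sit in $\{0\}^{i-1}\times\R^{n-i}\subseteq\R^{n-1}$, re-embeds into $\{0\}^i\times\R^{n-i}\subseteq\R^n$ upon prepending the coordinate $x_1$, and that the additional term $x_1\,\nu_{\widetilde{Y}_{1\bullet}}(E_1|_{\widetilde{Y}_1})$ supplied by additivity accounts precisely for the $i=1$ summand not produced by the inductive hypothesis.
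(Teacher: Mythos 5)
Your proof is correct and follows essentially the same route as the paper's: both reduce to the pointwise identity $\nu_{\widetilde{Y}_\bullet}(f^*D') = \nu_{Y_\bullet}(D') + \sum_i \nu_i(D')\cdot \nu_{\widetilde{Y}_{i\bullet}}(E_i|_{\widetilde{Y}_i})$ and prove it by tracking the extra exceptional contributions $\nu_i E_i|_{\widetilde{Y}_i}$ through the successive restrictions, using additivity of $\nu$ and the compatibility $(f^*G)|_{\widetilde{Y}_1} = (f|_{\widetilde{Y}_1})^*(G|_{Y_1})$. The only cosmetic difference is that you organize the computation as an induction on $n$ (and supply the negativity-lemma justification for the bijection $|D|_{\R}\cong|f^*D|_{\R}$, which the paper leaves implicit), whereas the paper unrolls the same computation coordinate by coordinate.
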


\begin{proof}
We can canonically identify $|D|_{\R}$ with $|f^*D|_{\R}$. For any $D' \in |D|_{\R}$, let
$$
\nu_{Y_\bullet}(D')=(\nu_1, \ldots, \nu_n) ~~\text{ and } ~~ \nu_{\widetilde{Y}_\bullet}(f^*D')=(\widetilde{\nu}_1, \ldots, \widetilde{\nu}_n).
$$
Since $\nu_{Y_\bullet}(|D|_{\R}|$ and $\nu_{\widetilde{Y}_\bullet}(|f^*D|_{\R})$ are dense subsets of $\okval_{Y_\bullet}(D)$ and $\okval_{\widetilde{Y}_\bullet}(f^*D)$, respectively, it is sufficient to show that
\begin{equation}\label{valbimoreq}
(\widetilde{\nu}_1,   \ldots, \widetilde{\nu}_n) = (\nu_1, \ldots, \nu_n) + \sum_{i=1}^{n-1} \nu_i \cdot \nu_{\widetilde{Y}_{i\bullet}}(E_i|_{\widetilde{Y}_i}).
\end{equation}

Let $D_1':=D'$ on $X=Y_0$, and define inductively $D_i':=(D_{i-1}'-\nu_{i-1}Y_{i-1})|_{Y_{i-1}}$ on $Y_{i-1}$  for $2 \leq i \leq n$. Similarly, let $\widetilde{D}_1':=f^*D'$ on $\widetilde{X}=\widetilde{Y}_0$, and define inductively $\widetilde{D}_i':=(\widetilde{D}_{i-1}'-\widetilde{\nu}_{i-1}\widetilde{Y}_{i-1})|_{\widetilde{Y}_{i-1}}$ on $\widetilde{Y}_{i-1}$ for $2 \leq i \leq n$. Then $\nu_i = \ord_{Y_i} D_i'$ and $\widetilde{\nu}_i = \ord_{\widetilde{Y}_i} \widetilde{D}_i'$ for $1 \leq i \leq n$.
First of all, observe that the first coordinates of both sides in (\ref{valbimoreq}) are $\widetilde{\nu}_1$ and $\nu_1$ and $\widetilde{\nu}_1=\nu_1$.
As $\widetilde{Y}_1 = f^* Y_1 - E_1$, we get
$$
\widetilde{D}_2'=(f^*D_1' - \nu_1 \widetilde{Y}_1)|_{\widetilde{Y}_1} = (f^*(D_1' - \nu_1 Y_1) + \nu_1 E_1)|_{\widetilde{Y}_1}
= f|_{\widetilde{Y}_1}^*D_2' + \nu_1 E_1|_{\widetilde{Y}_1}.
$$
Then we have
$$
\nu_{\widetilde{Y}_{1\bullet}}(\widetilde{D}_2') = \nu_{\widetilde{Y}_{1\bullet}}(f|_{\widetilde{Y}_1}^*D_2' ) + \nu_1 \cdot \nu_{\widetilde{Y}_{1\bullet}}(E_1|_{\widetilde{Y}_1}).
$$
Note that $\ord_{\widetilde{Y}_2} \widetilde{D}_2'=\widetilde{\nu}_2$ and $\ord_{\widetilde{Y}_2} f|_{\widetilde{Y}_1}^*D_2' = \nu_2$. Thus (\ref{valbimoreq}) holds for the second coordinates. Now, as $\widetilde{Y}_2 = f|_{\widetilde{Y}_1}^* Y_2 - E_2$, we get
$$
(f|_{\widetilde{Y}_1}^*D_2' - \nu_{2} \widetilde{Y}_2 )|_{\widetilde{Y}_2} = (f|_{\widetilde{Y}_1}^* (D_2' - \nu_2 Y_2) + \nu_2 E_2)|_{\widetilde{Y}_2}
= f|_{\widetilde{Y}_2}^*D_3' + \nu_2 E_2|_{\widetilde{Y}_2},
$$
Then we have
$$\nu_{\widetilde{Y}_{2\bullet}}((f|_{\widetilde{Y}_1}^*D_2' - \nu_{2} \widetilde{Y}_2 )|_{\widetilde{Y}_2} ) = \nu_{\widetilde{Y}_{2\bullet}}(f|_{\widetilde{Y}_2}^*D_3') + \nu_2 \cdot \nu_{\widetilde{Y}_{2\bullet}}(E_2|_{\widetilde{Y}_2}).$$
Note that
$$
\text{$\ord_{\widetilde{Y}_3} (f|_{\widetilde{Y}_1}^*D_2' - \nu_{2} \widetilde{Y}_2 )|_{\widetilde{Y}_2} +$the third coordinate of $\nu_1 \cdot \nu_{\widetilde{Y}_{1\bullet}}(E_1|_{\widetilde{Y}_1})=\widetilde{\nu}_3$}
$$
and $\ord_{\widetilde{Y}_3} f|_{\widetilde{Y}_2}^*D_3' = \nu_3$. Thus (\ref{valbimoreq}) holds for the third coordinates.
In general, we have
$$
\nu_{\widetilde{Y}_{i\bullet}}((f|_{\widetilde{Y}_{i-1}}^*D_i' - \nu_{i} \widetilde{Y}_i )|_{\widetilde{Y}_i} ) = \nu_{\widetilde{Y}_{i\bullet}}(f|_{\widetilde{Y}_i}^*D_{i+1}') + \nu_i \cdot \nu_{\widetilde{Y}_{i\bullet}}(E_i|_{\widetilde{Y}_i})~~\text{for $2 \leq i \leq n-1$. }
$$
Note that
$$
\text{$\ord_{\widetilde{Y}_{i+1}} (f|_{\widetilde{Y}_{i-1}}^*D_i' - \nu_{i} \widetilde{Y}_i )|_{\widetilde{Y}_i}
+$the $(i+1)$-th coordinate of $\displaystyle \sum_{j=1}^{i-1} \nu_{j} \cdot \nu_{\widetilde{Y}_{j\bullet}}(E_j|_{\widetilde{Y}_j})
=\widetilde{\nu}_{i+1}$}
$$
and
$\ord_{\widetilde{Y}_{i+1}}f|_{\widetilde{Y}_{i}}^* D_{i+1}' = \nu_{i+1}$.
Thus we obtain (\ref{valbimoreq}).
\end{proof}

We first prove the `only if' direction of Theorem \ref{main1}, which is a generalization of \cite[Proposition 4.1 (i)]{lm-nobody} to (possibly non-big) abundant divisors.

\begin{proposition}\label{numeq=>same}
Let $D, D'$ be divisors on $X$ with $\kappa(D), \kappa(D') \geq 0$. Suppose that $D$ or $D'$ is an abundant divisor. If $D \equiv D'$, then $\okval_{Y_\bullet}(D)=\okval_{Y_\bullet}(D')$ for every admissible flag $Y_\bullet$ on $X$.
\end{proposition}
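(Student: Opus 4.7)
\medskip

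\textbf{Proof strategy.} By Theorem \ref{abprop}(1), both $D$ and $D'$ are abundant, with $\kappa(D)=\kappa(D')$. The plan is to reduce the problem in stages to the known case of numerically equivalent big divisors, \cite[Proposition 4.1(i)]{lm-nobody}, using the Iitaka fibration of abundant divisors as the bridge.

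The first reduction is via the $s$-decomposition. For abundant divisors, Theorem \ref{abprop}(2) gives $N_s=N_\sigma$, which is a numerical invariant; hence $N_s(D)=N_s(D')$ as divisors. Lemma \ref{okbdzd} yields
$$\okval_{Y_\bullet}(D) = \okval_{Y_\bullet}(P_s(D)) + \okval_{Y_\bullet}(N_s(D)),$$
and analogously for $D'$, so the problem reduces to the positive parts. Replacing $D,D'$ by $P_s(D),P_s(D')$, one may assume $D,D'$ are movable abundant divisors with $N_\sigma(D)=N_\sigma(D')=0$.

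Next, by Theorem \ref{abprop}(3) applied on a common smooth birational model $\mu:W\to X$, there is a surjective morphism $g:W\to T$ with connected fibers and big divisors $B,B'$ on $T$ with $P_\sigma(\mu^*D)\sim_\Q g^*B$ and $P_\sigma(\mu^*D')\sim_\Q g^*B'$. Since $\mu^*D\equiv\mu^*D'$ and $N_\sigma$ is numerical, $g^*B\equiv g^*B'$, hence $B\equiv B'$, and \cite[Proposition 4.1(i)]{lm-nobody} yields $\okbd_{\overline Y_\bullet}(B)=\okbd_{\overline Y_\bullet}(B')$ for every admissible flag $\overline Y_\bullet$ on $T$. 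For the given flag $Y_\bullet$ on $X$, choose a birational modification $f:\widetilde X\to X$ factoring through $\mu$ together with a compatible admissible flag $\widetilde Y_\bullet$ on $\widetilde X$ as in Lemma \ref{valbimor}; since the affine transformation there depends only on $f$ and the flags, not on the divisor, it suffices to prove $\okval_{\widetilde Y_\bullet}(f^*D)=\okval_{\widetilde Y_\bullet}(f^*D')$. A second application of the $s$-decomposition reduction on $\widetilde X$, together with the $\R$-linear invariance of $\okval$, further reduces matters to showing
$$\okval_{\widetilde Y_\bullet}(\pi^*B)=\okval_{\widetilde Y_\bullet}(\pi^*B'),$$
where $\pi:\widetilde X\to T$ is the induced morphism. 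Arranging $\widetilde Y_\bullet$ so that $\widetilde Y_{n-\kappa(D)}$ is a Nakayama subvariety of $\pi^*B$ (equivalently of $\pi^*B'$, by Proposition \ref{nak=psv}) with $\widetilde Y_n$ general, Lemma \ref{altconval} writes both $\okval_{\widetilde Y_\bullet}(\pi^*B)$ and $\okval_{\widetilde Y_\bullet}(\pi^*B')$ as limits of Okounkov bodies of ample divisors $H_m,H'_m$ on a common image $Z_m$ of the Iitaka maps. These $H_m,H'_m$ are numerically equivalent in the limit, so the big-divisor case closes the argument.

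The main obstacle is the flag bookkeeping in the previous paragraph: given an arbitrary $Y_\bullet$, one must simultaneously arrange a compatible flag $\widetilde Y_\bullet$ on $\widetilde X$ to (a) satisfy the birational hypotheses of Lemma \ref{valbimor}, (b) contain a Nakayama subvariety of $\pi^*B$ so that Lemma \ref{altconval} applies, and (c) have general last vertex. The abundance assumption is essential here: Proposition \ref{nak=psv} together with the numerical invariance of positive volume subvarieties guarantees that a single compatible flag works for both $D$ and $D'$, so the reductions above produce matching pairs of Okounkov bodies throughout.
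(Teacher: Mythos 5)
Your setup parallels the paper's: both proofs invoke Theorem \ref{abprop}(1) to make $D,D'$ abundant, reduce via the $s$-decomposition and Lemma \ref{okbdzd} to positive parts, pull back to a model $W$ with a contraction $g:W\to T$ from Theorem \ref{abprop}(3), and use Lemma \ref{valbimor} to translate the flag. But the endgame diverges, and there are two genuine gaps.

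First, you write that Theorem \ref{abprop}(3) ``applied on a common smooth birational model'' produces big divisors $B,B'$ on the \emph{same} $T$ with $P_\sigma(\mu^*D)\sim_\Q g^*B$ and $P_\sigma(\mu^*D')\sim_\Q g^*B'$. Theorem \ref{abprop}(3) only provides this for one divisor at a time (and with $P_\sigma(g^*B)$, not $g^*B$). That the \emph{same} $g:W\to T$ also descends $P_\sigma(\mu^*D')$ is a nontrivial step the paper actually proves: it checks that $P_\sigma(\mu^*D')|_F\equiv 0$ on a general fiber $F$, hence $\sim_\Q 0$ since $\kappa(D')\geq 0$, giving $\kappa(P_\sigma(\mu^*D')|_F)=0$, and then invokes \cite[Corollary V.2.26]{nakayama} to descend. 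Your proposal silently assumes this output.

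Second, and more seriously, the final reduction does not close. After arriving at $B\equiv B'$ on $T$, you want to establish $\okval_{\widetilde Y_\bullet}(\pi^*B)=\okval_{\widetilde Y_\bullet}(\pi^*B')$ by applying Lemma \ref{altconval} to $\widetilde Y_\bullet$. But Lemma \ref{altconval} requires the flag to contain a Nakayama subvariety of the divisor, while Lemma \ref{valbimor} requires $\widetilde Y_\bullet$ to consist of the strict transforms of the originally given, \emph{arbitrary} flag $Y_\bullet$ on $X$. For a generic starting flag, these two demands are incompatible: the strict transform of $Y_{n-\kappa(D)}$ has no reason to be a Nakayama subvariety. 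You flag this tension at the end as ``bookkeeping'' and claim Proposition \ref{nak=psv} resolves it, but that proposition only identifies Nakayama subvarieties with positive volume subvarieties; it does not let you impose a Nakayama subvariety into a flag that is already pinned down by being a strict transform. Independently, the appeal to Fujita approximation producing divisors $H_m,H'_m$ on a ``common image $Z_m$'' that are ``numerically equivalent in the limit'' is not a rigorous statement: the models $Z_m$ depend on $\frak b(\lfloor mB\rfloor)$ and $\frak b(\lfloor mB'\rfloor)$ separately, and passing a numerical equivalence through a limit of Okounkov bodies is precisely the kind of thing that needs proof, not assertion.

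The paper sidesteps both issues entirely. Rather than descending to $T$ and invoking \cite[Proposition 4.1(i)]{lm-nobody}, it writes $P_\sigma(\mu^*D)\sim_\Q P_\sigma(\mu^*D')+g^*N$ with $N$ numerically trivial on $T$, multiplies by $(m+a)$ to absorb $g^*N$ into a semiample (hence abundant) divisor $g^*(A+(m+a)N)$, and uses plain subadditivity of $\okval$ together with the fact (Theorem \ref{abprop}(2), cf.\ Corollary \ref{nefcrit}) that $\okval_{\widetilde Y_\bullet}$ of a semiample abundant divisor contains the origin. Letting $m\to\infty$ gives one inclusion; symmetry gives the other. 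This argument works for \emph{any} flag $\widetilde Y_\bullet$ obtained by strict transform, with no Nakayama-subvariety hypothesis, and never needs Fujita approximation or a comparison of Iitaka models. You should replace the last paragraph of your proposal with this additivity argument; the earlier reductions can stay once the descent for $D'$ is justified as in the paper.
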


\begin{proof}
By Theorem \ref{abprop} (1), both $D, D'$ are abundant divisors.
Fix an admissible flag $Y_\bullet$ on $X$. Possibly by taking a higher birational model of $X$, we may assume that each subvariety $Y_i$ in $Y_\bullet$ is smooth (see Remark \ref{smflag}).
By Theorem \ref{thrm-abundant map}, there is a birational morphism $\mu \colon W \to X$ and a morphism $g \colon W \to T$ with connected fibers such that $P_{\sigma}(\mu^*D) \sim_{\Q} P_{\sigma}(g^*B)$ for some big divisor $B$ on $T$.
Thus $P_{\sigma}(\mu^*D')|_F \equiv 0$ for a general fiber $F$ of $g$, and hence, $P_{\sigma}(\mu^*D')|_F \sim_{\Q} 0$ since $\kappa(\mu^*D')=\kappa(D') \geq 0$. This implies that $\kappa_{\sigma}(P_{\sigma}(\mu^*D')|_F)=\kappa(P_{\sigma}(\mu^*D')|_F)=0$. By taking a higher birational model of $W$ if necessary, by \cite[Corollary V.2.26]{nakayama} (see also \cite[Theorem 5.7]{lehmann-red}),
we may assume that $P_{\sigma}(\mu^*D') \sim_{\Q} P_{\sigma}(g^*B')$ for some divisor $B'$ on $T$.
Applying \cite[Lemma III.5.15]{nakayama} (see also \cite[Proof of Corollary 6.3]{lehmann-red}), we see that $P_{\sigma}(B) \equiv P_{\sigma}(B')$ and $B'$ is also a big divisor on $T$. We also have $P_{\sigma}(\mu^*D) \sim_{\Q} P_{\sigma}(g^*B)=P_{\sigma}(g^*P_{\sigma}(B))$ and $P_{\sigma}(\mu^*D') \sim_{\Q} P_{\sigma}(g^*B')=P_{\sigma}(g^*P_{\sigma}(B'))$. We write $P_{\sigma}(B)= P_{\sigma}(B') + N$ for some numerically trivial divisor $N$ on $T$.
Then we have
    $$
 P_{\sigma}(\mu^*D)  \sim_{\Q} P_{\sigma}(g^*P_{\sigma}(B)) = P_{\sigma}(g^*P_{\sigma}(B')) + g^*N  \sim_{\Q} P_{\sigma}(\mu^*D') + g^*N.
$$

By successively taking strict transforms $\widetilde{Y}_i$ of $Y_i$ under the birational morphisms $\mu|_{\widetilde{Y}_{i-1}} \colon \widetilde{Y}_{i-1} \to Y_{i-1}$ for $1 \leq i \leq n$, we obtain an admissible flag
$$
\widetilde{Y}_\bullet: W=\widetilde{Y}_0 \supseteq \widetilde{Y}_1 \supseteq \cdots \supseteq \widetilde{Y}_{n-1} \supseteq \widetilde{Y}_n
$$
on $W$. Possibly by taking a higher birational model of $W$, we may assume that each subvariety of $\widetilde{Y}_\bullet$ is smooth. By Theorem \ref{abprop} (2), we have $P_{\sigma}(\mu^*D)=P_s(\mu^*D)$ and $P_{\sigma}(\mu^*D')=P_s(\mu^*D')$.
By Lemmas \ref{okbdzd} and \ref{valbimor}, it is sufficient to show that
$$
\okval_{\widetilde{Y}_\bullet}(P_{\sigma}(\mu^*D)) = \okval_{\widetilde{Y}_\bullet}(P_\sigma(\mu^*D')).
$$

Now, take an ample divisor $A$ on $T$ so that $A+kN$ is also an ample divisor for every $k \in \Z$. Choose a large integer $a>0$ such that $aP_{\sigma}(B')-A \sim_{\Q} E'$ for some effective divisor $E'$ on $T$.
Then $aP_{\sigma}(g^*P_{\sigma}(B')) - g^*A \sim_{\Q} E$ for some effective divisor $E$ on $W$.
For any integer $m>0$, we have
$$
(m+a) P_{\sigma}(\mu^*D)  \sim_{\Q} (m+a)(P_{\sigma}(\mu^*D')+g^*N) \sim_{\Q} mP_{\sigma}(\mu^*D') + E + g^*(A + (m+a)N).
$$
By the subadditivity property of the valuative Okounkov bodies, we have
$$
\okval_{\widetilde{Y}_\bullet}( P_{\sigma}(\mu^*D) ) \supseteq \frac{m}{m+a}\okval_{\widetilde{Y}_\bullet}(P_{\sigma}(\mu^*D')) + \frac{1}{m+a}\okval_{\widetilde{Y}_\bullet}(E) + \frac{1}{m+a}\okval_{\widetilde{Y}_\bullet}((g^*(A+(m+a)N))).
$$
Note that $g^*(A+(m+a)N)$ is a semiample divisor on $W$. Then we can find an effective divisor $E'' \in |g^*(A+(m+a)N)|_{\R}$ such that $\ord_{\widetilde{Y}_i}(E'') = 0$ for $1 \leq i \leq n$, so the origin of $\R^n$ is contained in $\okval_{\widetilde{Y}_\bullet}(g^*(A+(m+a)N))$. Hence we obtain
$$
\okval_{\widetilde{Y}_\bullet}( P_{\sigma}(\mu^*D) ) \supseteq \frac{m}{m+a}\okval_{\widetilde{Y}_\bullet}(P_{\sigma}(\mu^*D')) + \frac{1}{m+a}\okval_{\widetilde{Y}_\bullet}(E).
$$
By letting $m \to \infty$, we see that
$$
\okval_{\widetilde{Y}_\bullet}( P_{\sigma}(\mu^*D) ) \supseteq \okval_{\widetilde{Y}_\bullet}(P_{\sigma}(\mu^*D')).
$$
Similarly by replacing $D$ by $D'$ and $N$ by $-N$, we can also obtain the reverse inclusion. Therefore we complete the proof.
\end{proof}

\begin{remark}
Obviously, Proposition \ref{numeq=>same} does not hold without the assumption that $D$ or $D'$ is an abundant divisor (see \cite[Remark 3.13]{CHPW1}).
\end{remark}

For the converse of Proposition \ref{numeq=>same}, we need several lemmas.

\begin{lemma}\label{twofib}
Consider two surjective morphisms $f_1 \colon X \to Z_1$ and $f_2 \colon X \to Z_2$ with connected fibers. Suppose that $\dim Z_1 = \dim Z_2>0$ and $f_1, f_2$ are not birationally equivalent. Then for a general member $G \in |H|$ where $H$ is a very ample divisor on $Z_1$, the inverse image $f_1^{-1}(G)$ dominates $Z_2$ via $f_2$, i.e., we have $f_2(f_1^{-1}(G))=Z_2$.
\end{lemma}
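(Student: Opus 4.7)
The plan is to argue the contrapositive: assume that for a general $G \in |H|$ the image $Y_G := f_2(f_1^{-1}(G))$ is a proper subset of $Z_2$, and deduce that $f_1$ and $f_2$ are birationally equivalent. First I would note that $f_1^{-1}(G)$ is a closed subvariety of the projective variety $X$, so $Y_G$ is closed in $Z_2$, and the hypothesis becomes $Y_G \subsetneq Z_2$ for such $G$.

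The key step is a dichotomy on the dimension of $f_1(f_2^{-1}(z_2))$ as $z_2$ ranges over $Z_2$. Since $f_2$ has connected fibers, in characteristic zero the generic fiber $f_2^{-1}(z_2)$ is irreducible (by generic smoothness plus connectedness), and so $f_1(f_2^{-1}(z_2))$ is an irreducible closed subvariety of $Z_1$. Suppose first that $\dim f_1(f_2^{-1}(z_2)) \geq 1$ for $z_2$ in a dense open subset of $Z_2$. Then for \emph{any} member $G \in |H|$, the restriction $H|_{f_1(f_2^{-1}(z_2))}$ is ample on a positive-dimensional variety and therefore cannot be numerically trivial; hence $G$ must meet $f_1(f_2^{-1}(z_2))$, which translates to $z_2 \in Y_G$. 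Thus $Y_G$ contains a dense subset of $Z_2$, and by closedness $Y_G = Z_2$ for every $G \in |H|$, contradicting the assumption.

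Therefore we may assume $\dim f_1(f_2^{-1}(z_2)) = 0$ for general $z_2 \in Z_2$. Connectedness of $f_2^{-1}(z_2)$ then forces $f_1(f_2^{-1}(z_2))$ to be a single point $\phi(z_2) \in Z_1$, yielding a rational map $\phi \colon Z_2 \dashrightarrow Z_1$ satisfying $f_1 = \phi \circ f_2$. Since $\dim Z_1 = \dim Z_2$, the map $\phi$ is dominant and generically finite. To upgrade this to birationality I would use the connectedness of $f_1$: over a general $z_1 \in Z_1$, the fiber $f_1^{-1}(z_1) = f_2^{-1}(\phi^{-1}(z_1))$ is connected; writing $\phi^{-1}(z_1) = \{z_2^{(1)}, \ldots, z_2^{(k)}\}$, the preimage is the disjoint union of the connected sets $f_2^{-1}(z_2^{(i)})$, forcing $k = 1$. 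Hence $\phi$ is birational and $f_1, f_2$ are birationally equivalent, contradicting the hypothesis.

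The main subtlety to watch is confirming irreducibility of the general fiber of $f_2$, not merely connectedness, so that $f_1(f_2^{-1}(z_2))$ is an irreducible subvariety on which $H$ restricts to an ample class; this is standard in characteristic zero via generic smoothness. The remainder is routine dimension and connectedness bookkeeping.
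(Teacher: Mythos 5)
Your proof is correct, and it takes a genuinely different route from the paper's. The paper argues via linear series: assuming $D := f_2(f_1^{-1}(G))$ does not dominate $Z_2$, a quick dimension count shows $D$ is a prime divisor, and then $f_1^*G \leq f_2^*D$ gives inclusions $H^0(X, mf_1^*G) \subseteq H^0(X, mf_2^*D)$; these sections are used to produce rational maps between $Z_1$, $Z_2$, and the image $Z'$ of $\varphi_{|mf_2^*D|}$, from which birational equivalence of $f_1, f_2$ follows. Your argument instead runs a dichotomy on $\dim f_1(f_2^{-1}(z_2))$: positive dimension forces $G$ to meet $f_1(f_2^{-1}(z_2))$ by ampleness of $H$ (here irreducibility is not actually needed — a positive-dimensional component already suffices), while dimension zero plus connectedness of $f_2$-fibers yields a factorization $f_1 = \phi \circ f_2$, and connectedness of $f_1$-fibers then forces $\phi$ to be generically injective. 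This is more geometric and avoids any appeal to bigness or Iitaka-type maps on $Z_2$; the paper's approach is shorter but leans on the (true but unstated) fact that $D$ has codimension exactly one, whereas your dichotomy makes the relevant dimension bookkeeping explicit. One small point worth tightening: the identity $f_1^{-1}(z_1) = f_2^{-1}(\phi^{-1}(z_1))$ holds only over the open locus where $\phi$ is defined and finite, so the disjoint-union argument should be run on the dense open subset $f_1^{-1}(z_1) \cap f_2^{-1}(U)$, which is still connected because the general $f_1$-fiber is irreducible in characteristic zero; this does not affect the conclusion.
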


\begin{proof}
Notice that $|f_1^*H|$ is a base point linear system.
Thus we may assume that $f_1^{-1}(G)=f_1^*G \in |f_1^*H|$ is a general member so that $f_1^{-1}(G)$ is a prime divisor on $X$. Suppose that $f_2(f_1^{-1}(G))$ does not dominate $Z_2$ via $f_2$. Then $f_2(f_1^{-1}(G))$ is contained in a prime divisor $D$ on $Z_2$. We then have $f_1^*G \leq f_2^* D$, so
$$
H^0(X, \mathcal{O}_X(mf_1^*G)) \subseteq H^0(X, \mathcal{O}_X(mf_2^*D))~~\text{ for any integer $m>0$.}
$$
In particular, $D$ is a big divisor on $Z_2$. Consider a rational map $\varphi \colon X \dashrightarrow Z'$ given by $|mf_2^*D|$ for a sufficiently large and divisible integer $m>0$. The rational map $\varphi$ factors through $Z_2$ via a birational map $\varphi' \colon Z_2 \dashrightarrow Z'$ given by $|mD|$, and $\varphi$ and $f_2$ are birationally equivalent.
Since $H^0(X, \mathcal{O}_X(mf_1^*G)) \subseteq H^0(X, \mathcal{O}_X(mf_2^*D))$, there is a rational map $\pi \colon Z' \dashrightarrow Z_1$. Note that $\pi \circ \varphi \colon X \dashrightarrow Z_1$ is birationally equivalent to $f_1 \colon X \to Z_1$. As $f_1$ has connected fibers, $\pi$ is birational and so is $\pi \circ \varphi' \colon Z_2 \dashrightarrow Z_1$. This implies that $f_1, f_2$ are birationally equivalent, so we get a contradiction.
\end{proof}

\begin{theorem}\label{sameiitaka}
Let $D, D'$ be divisors on $X$ with $\kappa(D), \kappa(D') > 0$. If $\okval_{Y_\bullet}(D)=\okval_{Y_\bullet}(D')$ for every admissible flag $Y_\bullet$ on $X$, then the Iitaka fibrations of $D, D'$ are all birationally equivalent.
\end{theorem}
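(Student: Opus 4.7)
The plan is to argue by contradiction, with Lemma \ref{twofib} as the key geometric input. First, since $\dim \okval_{Y_\bullet}(D) = \kappa(D)$ for admissible flags $Y_\bullet$ with $Y_n$ general, the hypothesis immediately forces $\kappa(D) = \kappa(D') =: \kappa > 0$. Next I would pass to a common smooth birational model $\mu \colon W \to X$ on which both Iitaka fibrations $\phi \colon W \to Z$ and $\phi' \colon W \to Z'$ of $\mu^*D$ and $\mu^*D'$ are simultaneously realized as morphisms with connected fibers to smooth projective varieties of dimension $\kappa$. By Lemma \ref{okbdbir}, the hypothesized equality of Okounkov bodies transports to $W$ along admissible flags obtained as strict transforms.

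Suppose for contradiction that $\phi$ and $\phi'$ are not birationally equivalent. Applying Lemma \ref{twofib} with $f_1 = \phi$ and $f_2 = \phi'$ yields a very ample divisor $H$ on $Z$ such that, for a general $G \in |H|$, the prime divisor $E := \phi^*G$ on $W$ dominates $Z'$ via $\phi'$. I would sharpen the choice of $G$ to additionally guarantee that $E$ is non-$\mu$-exceptional and is not a prime component of the divisorial fixed part $F_m$ of $|\lfloor m\mu^*D' \rfloor|$ for any integer $m \geq 1$. The latter is the delicate step: the union over all $m$ of such $F_m$-components on $W$ is a countable set of prime divisors, whereas $|H|$ is an uncountable parameter space, so countable avoidance inside a Zariski-dense open subset of $|H|$ produces such a $G$. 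Setting $Y_1 := \mu(E)$ and completing to an admissible flag $Y_\bullet$ on $X$ with $Y_n$ a general point, its strict transform $\widetilde{Y}_\bullet$ on $W$ then begins with $\widetilde{Y}_1 = E$.

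The contradiction will come from comparing the supremum of the first-coordinate function $\nu_1$ on the two Okounkov bodies (which are equal by hypothesis). For $\mu^*D'$: after enlarging $W$ if necessary, the moving part of $|\lfloor m\mu^*D' \rfloor|$ is pulled back from $Z'$ via $\phi'$, so every element of this linear system has the form $\phi'^*\overline{D} + F_m$ with $\overline{D}$ effective on $Z'$. Because $\phi'(E) = Z'$ we have $\ord_E(\phi'^*\overline{D}) = 0$, and by choice of $G$ also $\ord_E(F_m) = 0$; hence $\nu_1$ vanishes identically on the dense subset of $\nu_{\widetilde{Y}_\bullet}(|\mu^*D'|_\R)$ coming from $\bigcup_m (1/m)|\lfloor m\mu^*D' \rfloor|_\Z$, giving $\sup \nu_1 = 0$ on $\okval_{\widetilde{Y}_\bullet}(\mu^*D')$. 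For $\mu^*D$: the Iitaka isomorphism $R(W, \mu^*D) \cong R(Z, B)$ for some big $\Q$-divisor $B$ on $Z$, combined with the fact that $G \sim H$ is ample on $Z$, implies that $\mu_G(B) := \sup\{ t \geq 0 : B - tG \in \ol\Eff(Z) \} > 0$. For any $0 < t < \mu_G(B)$ and $m$ sufficiently large and divisible, there exists $\overline{D} \in |mB|_\Z$ containing $G$ with multiplicity at least $mt$; pulling back via $\phi$ and using $\ord_E(\phi^*\overline{D}) = \ord_G(\overline{D})$ produces an effective divisor linearly equivalent to $\lfloor m\mu^*D \rfloor$ whose $\ord_E$-value is at least $mt$. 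Hence $\sup \nu_1 \geq \mu_G(B) > 0$ on $\okval_{\widetilde{Y}_\bullet}(\mu^*D)$, contradicting the equality.

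The main obstacle I foresee is the bookkeeping around the section-ring decompositions: precisely describing $|\lfloor m\mu^*D' \rfloor| = \phi'^*|L_m| + F_m$ and its analogue for $\mu^*D$ at the level of integer linear systems on $W$, and executing the countable-avoidance argument for $G \in |H|$ uniformly in $m$ so that $E = \phi^*G$ is simultaneously prime, non-$\mu$-exceptional, and disjoint (as a component) from every $F_m$. A brief density argument reducing from $|\mu^*D'|_\R$ to the rational-section points is also needed to handle the case where $D, D'$ are genuinely $\R$-divisors rather than $\Q$-divisors, so that the integer-section computation controls the whole Okounkov body.
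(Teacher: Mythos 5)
The proposal shares the paper's two key ingredients --- Lemma \ref{twofib} and the divisor $E := \phi^*G$ obtained from it --- but sets up the contradiction differently. The paper picks a general $\kappa$-dimensional subvariety $V' \subset \phi^{-1}(G)$ whose image is a Nakayama subvariety of $D'$, uses Theorem \ref{nakpvscrit} and the hypothesized equality of Okounkov bodies to conclude it is also a Nakayama subvariety of $D$, and then gets a contradiction because $V'$ does not dominate $Z$. You instead compare $\sup \nu_1$ for a flag with $\widetilde{Y}_1 = E$, which is a more hands-on realization of the same phenomenon.

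There is a genuine gap in your step asserting $\sup \nu_1 = 0$ on $\okval_{\widetilde{Y}_\bullet}(\mu^*D')$. By Definition \ref{def-vallim}, $\okval_{\widetilde{Y}_\bullet}(\mu^*D')$ is the closed convex hull of $\nu_{\widetilde{Y}_\bullet}(|\mu^*D'|_\R)$, taken over \emph{all} effective $\R$-divisors $\R$-linearly equivalent to $\mu^*D'$, not merely those in $\bigcup_m \tfrac{1}{m}|\lfloor m\mu^*D'\rfloor|_\Z$. The ``density argument'' you invoke to bridge this gap is circular: if some $D'' \in |\mu^*D'|_\R$ had $E$ in its support, its image $\nu_{\widetilde{Y}_\bullet}(D'')$ (with positive first coordinate) could not be a limit of convex combinations of points all having $\nu_1 = 0$, so the density you need is \emph{equivalent} to the conclusion you are after, not a tool for reaching it. The clean way to get the needed containment $\okval_{\widetilde{Y}_\bullet}(\mu^*D') \subset \{0\}^{n-\kappa}\times\R^\kappa$ is precisely Theorem \ref{nakpvscrit}, applied after arranging that $\widetilde{Y}_{n-\kappa} \subset E$ is a Nakayama subvariety of $\mu^*D'$ (e.g., a general complete intersection inside $E$ dominating $Z'$). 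Once one reaches for that theorem, the argument essentially becomes the paper's.

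A secondary imprecision: the asserted isomorphism $R(W, \mu^*D) \cong R(Z, B)$ for a big $B$ on $Z$ does not hold on a single fixed model $W$ without finite generation or abundance hypotheses (none of which is assumed here). The Iitaka fibration only provides, for each sufficiently large $m$, a factorization of $|\lfloor m\mu^*D\rfloor|$ through $Z$ after a further blow-up $X_m$ depending on $m$ --- this is exactly the per-$m$ setup the paper uses in Subsection \ref{altsubsec1}. Your argument for $\sup\nu_1 > 0$ on the $D$-side can be salvaged by working with the truncated picture ($M_m = \phi_m^*H_m$ with $H_m$ nef and big, and then $H_m - \delta G_m$ still big for $\delta > 0$), but as written the claim overreaches.
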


\begin{proof}
Let $f \colon X' \to X$ be a birational morphism with the Iitaka fibrations $\phi \colon X' \to Z$ of $D$ and $\phi' \colon X' \to Z'$ of $D'$.
Since $\dim \okval_{Y_\bullet}(D)=\kappa(D)$ for any admissible flag $Y_\bullet$, we have  $\kappa(D)=\kappa(D')$ so that $\dim Z=\dim Z'$.
To derive a contradiction, suppose that $\phi, \phi'$ are not birationally equivalent. By Lemma \ref{twofib}, for a general member $G \in |H|$ where $H$ is a very ample divisor on $Z$, the inverse image $\phi^{-1}(G)$ dominates $Z'$ via $\phi'$.
We can take a general subvariety $V' \subseteq \phi^{-1}(G)$ of dimension $\kappa(D')$ such that $f(V')$ is a Nakayama subvariety of $D'$.
By Theorem \ref{nakpvscrit}, $f(V')$ is also a Nakayama subvariety of $D$. However, $\phi(V') \subseteq \phi(\phi^{-1}(G))=G$, so $V'$ does not dominate $Z$ via $\phi$.
This is a contradiction, and we are done.
\end{proof}

The following lemma plays a crucial role in the proof of the converse of Proposition  \ref{numeq=>same}. It can be considered as a generalization of \cite[Corollary 3.3 and Theorem 3.4 (b)]{Jow} although our proof is completely different from Jow's proof in \cite{Jow}.

\begin{lemma}\label{curveintersection}
Let $D$ be a divisor on $X$ with $\kappa(D) > 0$, and $D=P_s + N_s$ be the $s$-decomposition. Consider an irreducible curve $C$ on $X$ obtained by a transversal complete intersection of general effective very ample divisors on $X$. We can choose an admissible flag $Y_\bullet : X=Y_0 \supseteq Y_1 \supseteq \cdots \supseteq Y_{n-1} \supseteq Y_n=\{x\}$ on $X$ such that $Y_{n-\kappa(D)}$ is a Nakayama subvariety of $D$, $Y_{n-1}=C$, and $x$ is a general point on $C$.
Fix an Iitaka fibration $\phi \colon X' \to Z$ of $D$, and let $C'$ be the strict transform of $C$ on $X'$.
Then we have
$$
P_s \cdot C =\deg(C' \to \phi(C')) \cdot \vol_{\R^1}(\okval_{Y_\bullet}(D)_{x_1=\cdots=x_{n-1}=0}).
$$
\end{lemma}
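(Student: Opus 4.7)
The plan is to combine the Fujita-type approximation of Lemma \ref{altconval} with the standard formula for the one-dimensional slice of an Okounkov body of an ample divisor, and to take the limit using monotonicity along divisible $m$.

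\textbf{Reduction to $P_s$.} Since $C = Y_{n-1}$ and $x = Y_n$ are chosen generally, neither is contained in $\Supp(N_s)$. Lemma \ref{okbdzd} then yields $\okval_{Y_\bullet}(D) = \okval_{Y_\bullet}(P_s)$, and replacing $D$ by $P_s$ leaves $P_s \cdot C$ unchanged. So I may carry out the computation with $N_s = 0$ in mind.

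\textbf{Slicing the approximating bodies.} Apply Lemma \ref{altconval}: identifying $\okval_{Y_\bullet}(D) \subseteq \{0\}^{n-\kappa(D)} \times \R^{\kappa(D)}$ with its image in $\R^{\kappa(D)}$,
$$\okval_{Y_\bullet}(D) = \lim_{m \to \infty} \okbd_{\overline{Y}_\bullet^m}(H_m),$$
where $H_m$ is ample on $Z_m$ with $\phi_m^*H_m = M_m$. Because $C$ is a general complete intersection of very ample divisors, $f_m$ is an isomorphism near $C$, so $f_m^*C = C^m$ (strict transform, isomorphic to $C$), and the image flag $\overline{Y}_\bullet^m$ on $Z_m$ has penultimate term $\phi_m(C^m)$, which is a genuine curve (since a general $C$ is not contained in any fibre of $\phi_m$). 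Iterating the restriction isomorphism for Okounkov bodies of ample divisors along $\overline{Y}_\bullet^m$ (the standard theory of \cite{lm-nobody}) gives
$$\okbd_{\overline{Y}_\bullet^m}(H_m) \cap \bigl(\{0\}^{\kappa(D)-1} \times \R\bigr) = [\,0,\, H_m \cdot \phi_m(C^m)\,],$$
whose length is $H_m \cdot \phi_m(C^m)$.

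\textbf{Projection formula and convergence of intersections.} Let $d_m := \deg(\phi_m|_{C^m} \colon C^m \to \phi_m(C^m))$. For all $m$ sufficiently large and divisible, $\phi_m$ is birationally equivalent to the Iitaka fibration $\phi$, so $d_m = \deg(C' \to \phi(C'))$. By projection formula,
$$H_m \cdot \phi_m(C^m) = \frac{\phi_m^*H_m \cdot C^m}{d_m} = \frac{M_m \cdot C^m}{d_m} = \frac{f_{m*}M_m \cdot C}{d_m}.$$
Writing $f_{m*}M_m = D - F_m/m - \{mD\}/m$ with $F_m$ the fixed part of $|\lfloor mD\rfloor|$ on $X$, and using the classical section-theoretic identity $\ord_E(N_s(D)) = \lim_m \tfrac{1}{m}\ord_E(F_m)$ for every prime $E$ (obtained by approximating $\R$-linear classes by $\Q$-linear ones and invoking Fekete's lemma for the subadditive sequence $\ord_E(F_m)$), we get $f_{m*}M_m \to P_s(D)$ coefficient-wise, hence $f_{m*}M_m \cdot C \to P_s \cdot C$.

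\textbf{Interchange of slicing and limit (the main obstacle).} Passing a one-dimensional slice through a limit of convex bodies is in general not continuous, so the last step needs care. I would handle it via monotonicity: restricting to a multiplicatively cofinal sequence in $m$, the multiplication maps $H^0(X, \lfloor mD\rfloor)^{\otimes (m'/m)} \hookrightarrow H^0(X, \lfloor m'D\rfloor)$ yield $M_m \leq M_{m'}$ on a common resolution whenever $m \mid m'$, and hence the approximating bodies $\okbd_{\overline{Y}_\bullet^m}(H_m) \subseteq \R^{\kappa(D)}$ form an increasing family with union dense in $\okval_{Y_\bullet}(D)$. For such a nested family, the one-dimensional slices along the last coordinate axis form an increasing family of intervals whose lengths converge up to the length of the corresponding slice of the limit body. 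Putting everything together,
$$\vol_{\R^1}\bigl(\okval_{Y_\bullet}(D)_{x_1 = \cdots = x_{n-1}=0}\bigr) = \lim_{m \to \infty} H_m \cdot \phi_m(C^m) = \frac{P_s \cdot C}{\deg(C' \to \phi(C'))},$$
which is exactly the desired identity after rearrangement.
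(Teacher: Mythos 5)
Your outline follows the same architecture as the paper's proof (reduce to $P_s$, express the sliced body of the ample $H_m$ as an intersection number, show $M_m\cdot C_m\to P_s\cdot C$, then pass the slice through the Fujita limit), and the first three steps are essentially correct. The gap is precisely where you flag the ``main obstacle'': the monotonicity argument you offer does not actually resolve it.

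The claim that for an increasing family of convex bodies $K_m\nearrow K$ (with $K=\overline{\bigcup_m K_m}$) the one-dimensional slices $K_m\cap\ell$ converge to $K\cap\ell$ is false, even when all slices are nonempty intervals containing a common endpoint. For instance, in $\R^2$ take $K_m=\mathrm{conv}\{(0,0),(0,\tfrac12),(\tfrac1m,1),(1,1),(1,0)\}$. These are nested, their union is dense in $K=[0,1]^2$, and each slice along $\{x=0\}$ is $[0,\tfrac12]$, but the slice of the limit body is $[0,1]$. Slicing picks out a face of the limit body, and ``mass'' can accumulate on that face from the interior without ever showing up in the slices of the approximants — exactly the phenomenon your own words warn about. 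Nothing in the abstract convex-geometry setup rules it out, and the extra hypothesis that the slices are intervals $[0,H_m\cdot\phi_m(C_m)]$ anchored at the origin does not help, as the example shows. So the assertion ``whose lengths converge up to the length of the corresponding slice of the limit body'' is unjustified and the proof does not close.

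The paper devotes the bulk of the proof to precisely this point (the equality labelled $(\sharp)$). It argues by contradiction: if the limit of the slices were strictly smaller, one perturbs $H_m$ by a small auxiliary ample class $A_{m_0}$ pulled back from a fixed $Z_{m_0}$, exploits Lemma \ref{altconval} to produce an effective $H_m'\sim_{\Q}H_m$ whose flag valuation vector has a large last coordinate $b$ and small leading coordinates $\delta_i$, and then reconstructs a chain of ample divisors on the flag subvarieties whose intersection with $\phi_m(C_m)$ must exceed $b$, contradicting the assumed bound. This uses the ampleness of $H_m$ and the algebro-geometric identification of slice-lengths with intersection numbers in an essential way — it is not a purely convex-geometric continuity statement. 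To repair your proof you would need to supply an argument of this kind (or an equivalent continuity input for restricted volumes along the flag) rather than appeal to monotonicity of the bodies.
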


\begin{proof}
We can choose general effective very ample divisors $A_1, \ldots, A_{n-1}$ on $X$ such that $A_1 \cap \cdots \cap A_{n-1}=C$. We may assume that $Y_i:=A_1 \cap \cdots \cap A_{i}$ is an irreducible subvariety of codimension $i$ for each $1 \leq i \leq n-1$. By letting $Y_n:=\{x\}$ where $x$ is a general point on $C$, we obtain an admissible flag
$$
Y_\bullet : X=Y_0 \supseteq Y_1 \supseteq \cdots \supseteq Y_{n-1} \supseteq Y_n=\{x\}
$$
on $X$. Since $A_1, \ldots, A_{n-\kappa(D)}$ are general effective very ample divisors, $Y_{n-\kappa(D)}$ is a Nakayama subvariety of $D$ by \cite[Proposition 2.13]{CHPW1}. Thus this admissible flag $Y_\bullet$ satisfies the conditions in the statement.

For a sufficiently large integer $m>0$, take a log resolution $f_m \colon X_m \to X$ of the base ideal $\mathfrak{b}(\lfloor mD \rfloor)$ so that we obtain a decomposition $f_m^*(\lfloor mD \rfloor)=M_m'+F_m'$ into a base point free divisor $M_m'$ on $X_m$ and the fixed part $F_m'$ of $|f_m^*(\lfloor mD \rfloor)|$. Let $M_m:=\frac{1}{m}M_m'$ and $F_m:=\frac{1}{m}F_m'$. Let $\phi_m \colon X_m \to Z_m$ be a morphism given by $|M_m'|$. By taking a higher birational model of $Z_m$, we may assume that $Z_m$ is a smooth variety.
There exists a nef and big divisor $H_m$ on $Z_m$ such that $M_m=\phi_m^* H_m$.
Since our choice of admissible flag $Y_\bullet$ is independent of this process, we may assume that $f_m \colon X_m \to X$ is isomorphic over a neighborhood of $x$. Let $C_m$ be the strict transform of $C$ on $X_m$.
By taking strict transforms $Y_i^m$ of $Y_i$ on $X_m$ for each $0 \leq i \leq n-1$ (note that $Y_{n-1}^m=C_m$), we obtain an admissible flag
$$
Y_{\bullet}^m: X_m=Y_0^m \supseteq Y_1^m \supseteq \cdots \supseteq Y_{n-1}^m \supseteq Y_n^m=\{f_m^{-1}(x)\}
$$
on $X_m$. We may also assume that
\begin{scriptsize}
$$
\overline{Y}^m_{\bullet} : Z_m=\overline{Y}^m_0=\phi_m(Y_{n-\kappa(D)}^m) \supseteq \overline{Y}^m_1=\phi_m(Y_{n-\kappa(D)-1}^m) \supseteq \cdots \supseteq \overline{Y}^m_{\kappa(D)-1}=\phi_m(Y_{n-1}^m) \supseteq \overline{Y}^m_{\kappa(D)}=\phi_m(Y_n^m)
$$
\end{scriptsize}\\[-18pt]
is an admissible flag on $Z_m$. Note that $d:=\deg(C' \to \phi(C')) = \deg(C_m \to \phi_m(C_m))$. Then $M_m \cdot C_m = d \cdot (H_m \cdot \phi_m(C_m))$. By \cite[Theorem 1.1]{CPW2}, we have
$$
H_m \cdot \phi_m(C_m) = \vol_{Z_m|\overline{Y}_{\kappa(D)-1}}(H_m)=\vol_{\R^1}(\okbd_{\overline{Y}^m_{\bullet}}(H_m)_{x_1=\cdots=x_{\kappa(D)-1}=0}).
$$

We now prove that $\displaystyle P_s \cdot C = \lim_{m \to \infty}M_m \cdot C_m$. Let $E_1, \ldots, E_k$ be the divisorial components of $\SB(D)$. Since the closure of $\SB(D) \setminus (E_1 \cup \cdots \cup E_k)$ has codimension at least two in $X$, we may assume that $C \cap \SB(D) \subseteq E_1 \cup \cdots \cup E_k$. We can also assume that $C$ is smooth and meets all $E_i$ transversally at smooth points of $E_i$. Thus $C_m$ does not meet any effective $f_m$-exceptional divisor. We write
$$
f_m^* \frac{\lfloor mP_s \rfloor}{m} = M_m + e_1^m f_{m*}^{-1}E_1 + \cdots+ e_k^m f_{m*}^{-1}E_k + F_m''
$$
where $F_m''$ is an effective $f_m$-exceptional divisor. We have
$$
\frac{\lfloor mP_s \rfloor}{m} \cdot C = f_m^*\frac{\lfloor mP_s \rfloor}{m} \cdot C_m = M_m \cdot C_m + e_1^m E_1 \cdot C + \cdots + e_k^m E_k \cdot C.
$$
Since $\displaystyle \lim_{m \to \infty} e_i^m=0$ for each $1 \leq i \leq k$ and $\displaystyle  \lim_{m \to \infty}\frac{\lfloor mP_s \rfloor}{m} \cdot C =P_s \cdot C$, it follows that $\displaystyle P_s \cdot C = \lim_{m \to \infty}M_m \cdot C_m$ as desired.

Combining what we have obtained above, we find
$$
P_s \cdot C = d \cdot \lim_{m \to \infty} \vol_{\R^1}(\okbd_{\overline{Y}^m_{\bullet}}(H_m)_{x_1=\cdots=x_{\kappa(D)-1}}).
$$
To prove the lemma, it is sufficient to show that
\begin{equation}\label{sharp}
\lim_{m \to \infty} \okbd_{\overline{Y}^m_{\bullet}}(H_m)_{x_1=\cdots=x_{\kappa(D)-1}=0}=\okval_{Y_\bullet}(D)_{x_1=\cdots=x_{n-1}=0}.
\end{equation}
By definition, $\displaystyle  \lim_{m \to \infty} \okbd_{\overline{Y}^m_{\bullet}}(H_m)_{x_1=\cdots=x_{\kappa(D)-1}=0}\subseteq \okval_{Y_\bullet}(D)_{x_1=\cdots=x_{n-1}=0}$ holds. To derive a contradiction, suppose that this inclusion is strict.
For a sufficiently large integer $m_0 >0$, we can choose a small ample $\Q$-divisor $A_{m_0}$ on $Z_{m_0}$ such that
$$
\vol_{\R^1}(\okbd_{\overline{Y}^{m}_{\bullet}}(H_{m})_{x_1=\cdots=x_{\kappa(D)-1}}) + A_{m_0}\cdot \phi_{m_0}(C_{m_0}) < \vol_{\R^1}(\okval_{Y_\bullet}(D)_{x_1=\cdots=x_{n-1}=0}) - \eps
$$
for any sufficiently small number $\eps > 0$ and any sufficiently large integer $m > m_0$.
There exists a sufficiently small number $\delta >0$ such that all the following divisors
$$
\begin{array}{rcl}
A_{m_0,1}=A_{m_0,1}(\delta_1)& \sim_{\Q} & A_{m_0}+\delta_1 \overline{Y}^{m_0}_1,\\
A_{m_0,2}=A_{m_0,2}(\delta_1, \delta_2)& \sim_{\Q} & A_{m_0,1}|_{\overline{Y}^{m_0}_1}+\delta_2 \overline{Y}^{m_0}_2,\\
  &\vdots&\\
A_{m_0, \kappa(D)-1}=A_{m_0, \kappa(D)-1}(\delta_1, \ldots, \delta_{\kappa(D)-1})& \sim_{\Q} & A_{m_0, \kappa(D)-2}|_{\overline{Y}_{\kappa(D)-2}^{m_0}} + \delta_{\kappa(D)-1} \overline{Y}^{m_0}_{\kappa(D)-1}
\end{array}
$$
are ample $\Q$-divisors for any nonnegative rational numbers $\delta_1, \delta_2, \ldots, \delta_{\kappa(D)-1} \leq \delta$.
By Lemma \ref{altconval},  $\displaystyle \okval_{Y_\bullet}(D) = \lim_{m \to \infty} \okval_{Y_{\bullet}^m}(M_m)= \lim_{m \to \infty} \okbd_{\overline{Y}_\bullet^m}(H_m)$.
Thus there exist a sufficiently large integer $m>0$ and an effective divisor $H_m' \sim_{\Q} H_m$ on $Z_m$ such that if we write $\nu_{\overline{Y}^m_{\bullet}}(H_m')=(\delta_1, \ldots, \delta_{\kappa(D)-1}, b)$, then $\delta_1, \delta_2, \ldots, \delta_{\kappa(D)-1}, b$ are nonnegative rational numbers with $\delta_1, \delta_2, \ldots, \delta_{\kappa(D)-1} \leq \delta$ and $
 \vol_{\R^1}(\okval_{Y_\bullet}(D)_{x_1=\cdots=x_{n-1}=0}) - \eps \leq b$.
We can write
 $$
\begin{array}{rcl}
H_m' & = & H_{m,1} + \delta_1 \overline{Y}_1^m,\\
H_{m,1}|_{\overline{Y}_1^m}& = & H_{m,2} + \delta_2 \overline{Y}_2^m,\\
  &\vdots&\\
H_{m, \kappa(D)-2}|_{\overline{Y}_{\kappa(D)-2}^m}&= & H_{m, \kappa(D)-1} + \delta_{\kappa(D)-1} \overline{Y}_{\kappa(D)-1}^m
\end{array}
$$
where each $H_{m,i}$ is an effective divisor on $\overline{Y}_{i-1}^m$ for $1 \leq i \leq \kappa(D)-1$. Notice that
$H_{m,\kappa(D)-1} \cdot \phi_m(C_m)=H_{m,\kappa(D)-1} \cdot \overline{Y}_{\kappa(D)-1}^m \geq b$.
By taking common resolution, we can assume that there is a birational morphism $g_{m} \colon Z_m \to Z_{m_0}$ such that $\overline{Y}_i^m = g_m|_{\overline{Y}_{i-1}^m}^* \overline{Y}_i^{m_0}$ for every $1 \leq i \leq \kappa(D)$.
Note that $H_m + g_m^* A_{m_0} + B$ is an ample $\Q$-divisor on $Z_m$ for any ample $\Q$-divisor $B$ on $Z_m$. We may assume that $\overline{Y}_{\kappa(D)-2}^m \not\subseteq \Supp(B)$. Thus we can find an effective divisor $E \sim_{\Q} H_m + g_m^* A_{m_0} + B$ such that
$$
E|_{\overline{Y}_{\kappa(D)-2}^m}=H_{m,\kappa(D)-1} + g_m|_{\overline{Y}_{\kappa(D)-2}^m}^*A_{m_0, \kappa(D)-1} + B|_{\overline{Y}_{\kappa(D)-2}^m}
$$
where $A_{m_0, \kappa(D)-1}=A_{m_0, \kappa(D)-1}(\delta_1, \ldots, \delta_{\kappa(D)-1})$. Then we obtain
\begin{scriptsize}
$$
(H_m + g_m^* A_{m_0} + B) \cdot \phi_m(C_m)
=E \cdot \phi_m(C_m)
=(H_{m,\kappa(D)-1} + g_m|_{\overline{Y}_{\kappa(D)-2}^m}^*A_{m_0,\kappa(D)-1} +B|_{\overline{Y}_{\kappa(D)-2}^m}) \cdot \phi_m(C_m) > b.
$$
\end{scriptsize}\\[-15pt]
As $B$ can be an arbitrarily small ample divisor, we get $(H_m + g_m^* A_{m_0}) \cdot \phi_m(C_m) \geq b$. Then we have
$$
\begin{array}{l}
  \vol_{\R^1}(\okbd_{\overline{Y}^{m}_{\bullet}}(H_{m})_{x_1=\cdots=x_{\kappa(D)-1}}) + A_{m_0}\cdot \phi_{m_0}(C_{m_0}) =  (H_m + g_m^* A_{m_0}) \cdot \phi_m(C_m)\\
 \geq b \geq \vol_{\R^1}(\okval_{Y_\bullet}(D)_{x_1=\cdots=x_{n-1}=0}) - \eps,
 \end{array}
$$
which is a contradiction. Therefore, we obtain (\ref{sharp}) as required.
\end{proof}

\begin{remark}
Here we explain why Lemma \ref{curveintersection} can be considered as a generalization of Jow's result \cite[Corollary 3.3 and Theorem 3.4 (b)]{Jow}, which states that if $D$ is a big divisor on $X$ and $Y_\bullet$ is an admissible flag on $X$ whose subvarieties are transversal complete intersections of general effective very ample divisors on $X$, then
$$
 \vol_{\R^1}(\okbd_{Y_\bullet}(D)_{x_1=\cdots=x_{n-1}=0}) = D \cdot Y_{n-1} - \sum_{i=1}^k \sum_{p \in Y_{n-1} \cap E_i} \ord_{E_i}(||D||)
$$
where $E_1, \ldots, E_k$ are irreducible components of $\SB(D)$. Since $Y_{n-1}$ is a sufficiently general curve, we may assume that $Y_{n-1}$ is smooth and meets all $E_i$ transversally at smooth points of $E_i$. Thus Jow's result can be also expressed equivalently as
$$ \sum_{i=1}^k \sum_{p \in Y_{n-1} \cap E_i} \ord_{E_i}(||D||) = N_{\sigma}(D) \cdot Y_{n-1}$$ so that $\vol_{\R^1}(\okbd_{Y_\bullet}(D)_{x_1=\cdots=x_{n-1}=0}) =P_{\sigma}(D) \cdot Y_{n-1}$. Note that for any big divisor $D$, $P_{\sigma}(D)=P_s(D)$ and the identity map $\text{id}_X \colon X \to X$ is an Iitaka fibration of $D$. Thus Lemma \ref{curveintersection} recovers Jow's result.
\end{remark}

\begin{lemma}\label{negpartviaval}
Let $D$ be a divisor on $X$ with $\kappa(D) > 0$, and $D=P_s + N_s$ be the $s$-decomposition. Let $E$ be an irreducible component of $N_s$. Then we have
$$
\mult_E N_s= \inf\{ x_1 \mid (x_1, \ldots, x_n) \in \okval_{Y_\bullet}(D), \text{ $Y_\bullet$ is an admissible flag such that $Y_1=E$} \}.
$$
In particular, one can read off the negative part $N_s$ from the set
$$
\{ \okval_{Y_\bullet}(D) \mid Y_\bullet \text{ is an admissible flag on $X$} \}.
$$
\end{lemma}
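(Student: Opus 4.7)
The plan is to deduce the identity directly from the definitions: when $Y_1=E$ the first coordinate of the Okounkov valuation vector records exactly the multiplicity along $E$, and the coefficient-wise definition of $N_s$ then takes care of the rest. No heavier machinery (Fujita approximation, Iitaka fibrations, etc.) should be needed here.

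In detail, we would proceed in three short steps. First, fix any admissible flag $Y_\bullet$ with $Y_1=E$ and observe from the inductive construction of $\nu_{Y_\bullet}$ that $\nu_1(D')=\ord_{Y_1}(D')=\mult_E D'$ for every $D'\in |D|_\R$, so that the projection of $\nu_{Y_\bullet}(|D|_\R)$ onto the first coordinate is exactly $\{\mult_E D' \mid D'\in |D|_\R\}$. Second, we invoke the elementary fact that for any bounded set $S\subseteq \R^n$ and any continuous linear functional $\ell$ one has $\inf_{\overline{\mathrm{conv}}(S)}\ell = \inf_S \ell$; since $\okval_{Y_\bullet}(D)\subseteq\oklim_{Y_\bullet}(D)$ is bounded, applying this to the first-coordinate projection yields
$$
\inf\{x_1 \mid (x_1,\ldots,x_n)\in\okval_{Y_\bullet}(D)\} \;=\; \inf\{\mult_E D' \mid D'\in |D|_\R\}.
$$
Third, we unwind the definition $N_s=\inf\{L\mid L\sim_\R D,\ L\ge 0\}$ (the infimum being coefficient-wise) to get $\mult_E N_s=\inf\{\mult_E D'\mid D'\in |D|_\R\}$, and combine with the previous display. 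This proves the identity for every single admissible flag with $Y_1=E$, and hence also after taking the infimum over all such flags.

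For the \emph{in particular} clause, we remark that the same argument applies verbatim to any prime divisor $E$ on $X$, not only to the components of $N_s$ (both sides vanish when $E$ is not such a component). Since $N_s$ is supported on only finitely many primes, running $E$ through all prime divisors on $X$ and reading off the invariant above from the family $\{\okval_{Y_\bullet}(D)\}$ reconstructs $N_s$ completely. We do not foresee any genuine obstacle: the only point that deserves a short justification is the topological statement about $\overline{\mathrm{conv}}$ used in Step 2, which is standard convex analysis applied to a bounded subset of $\R^n$.
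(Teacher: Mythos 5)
Your proof is correct and, if anything, slightly more economical than the paper's. The paper routes through Lemma~\ref{okbdzd}, writing $\okval_{Y_\bullet}(D)=\okval_{Y_\bullet}(P_s)+\okval_{Y_\bullet}(N_s)$ as a Minkowski sum and then observing that the first-coordinate infimum of $\okval_{Y_\bullet}(P_s)$ is $0$ while $\okval_{Y_\bullet}(N_s)$ is the single point $\nu_{Y_\bullet}(N_s)$ with $x_1=\mult_E N_s$. You instead go straight to the definitions: $\nu_1(D')=\mult_E D'$ for any flag with $Y_1=E$, closures and convex hulls do not change the infimum of a continuous linear functional, and $\mult_E N_s=\inf_{D'\in|D|_\R}\mult_E D'$ is precisely the coefficient-wise definition of the $s$-decomposition. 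Both arguments rest on the same underlying fact (the coefficient-wise characterization of $N_s$); yours just makes the reduction explicit without invoking the auxiliary Minkowski-sum lemma, which is a perfectly legitimate and slightly leaner way to organize the proof. As a minor remark, your Step 2 does not actually need boundedness of the Okounkov body: closure preserves the infimum of any continuous function, and taking a convex hull preserves the infimum of any concave (in particular linear) function, so the claim is unconditional — though boundedness does hold here, as $\okval_{Y_\bullet}(D)\subseteq\okbd_{Y_\bullet}(D+A)$ for ample $A$.
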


\begin{proof}
By the definition of $s$-decomposition, we have
$$
 \inf\{ x_1 \mid (x_1, \ldots, x_n) \in \okval_{Y_\bullet}(P_s), \text{ $Y_\bullet$ is an admissible flag such that $Y_1=E$} \}=0.
$$
Note also that $\okval_{Y_\bullet}(N_s)$ consists of a single point $(x_1, \ldots, x_n)$ with $x_1=\mult_E N_s$. Thus the assertion  follows from Lemma \ref{okbdzd}.
\end{proof}

We are now ready to complete the proof of Theorem \ref{main1} by proving the converse of Proposition \ref{numeq=>same}.
The following result is a generalization of \cite[Theorem A]{Jow} to possibly non-big divisor case.

\begin{proposition}\label{same=>numeq}
Let $D, D'$ be divisors on $X$ with $\kappa(D), \kappa(D') \geq 0$. If $\okval_{Y_\bullet}(D)=\okval_{Y_\bullet}(D')$ for every admissible flag $Y_\bullet$ on $X$, then $D \equiv D'$.
\end{proposition}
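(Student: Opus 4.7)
The plan is to reduce the proposition to a comparison of intersection numbers with complete intersection curves, in the spirit of Jow's proof of \cite[Theorem A]{Jow} for big divisors. First, Lemma \ref{negpartviaval} recovers the multiplicity of $N_s$ along each prime divisor from the collection of valuative Okounkov bodies. Hence the hypothesis yields $N_s(D) = N_s(D')$ as effective $\mathbb{R}$-divisors, and Lemma \ref{okbdzd} then gives $\okval_{Y_\bullet}(P_s(D)) = \okval_{Y_\bullet}(P_s(D'))$ for every admissible flag $Y_\bullet$ with $Y_n$ general. Since $D - D' = P_s(D) - P_s(D')$, it will suffice to show $P_s(D) \equiv P_s(D')$. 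Comparing dimensions of Okounkov bodies forces $\kappa(D) = \kappa(D')$; if this common value is $0$, then each $\okval_{Y_\bullet}(D)$ is a single point, and letting $Y_1$ range over all prime divisors identifies the unique effective representatives in $|D|_{\mathbb{R}}$ and $|D'|_{\mathbb{R}}$, yielding $D \sim_{\mathbb{R}} D'$ and a fortiori $D \equiv D'$.

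Henceforth assume $\kappa(D) = \kappa(D') > 0$. By Theorem \ref{sameiitaka}, $D$ and $D'$ admit a common Iitaka fibration $\phi \colon X' \to Z$. For any irreducible curve $C$ obtained as a transversal complete intersection of $n-1$ general effective very ample divisors on $X$, we choose an admissible flag $Y_\bullet$ with $Y_{n-1} = C$, $Y_n$ a general point of $C$, and $Y_{n-\kappa(D)}$ a Nakayama subvariety of $D$; by Theorem \ref{nakpvscrit} together with the equality of Okounkov bodies, $Y_{n-\kappa(D)}$ is then simultaneously a Nakayama subvariety of $D'$. Applying Lemma \ref{curveintersection} to both divisors, with the common fibration $\phi$, we obtain
$$
P_s(D) \cdot C \;=\; \deg(C' \to \phi(C')) \cdot \vol_{\R^1}\!\left(\okval_{Y_\bullet}(D)_{x_1 = \cdots = x_{n-1} = 0}\right) \;=\; P_s(D') \cdot C.
$$
Setting $\alpha := P_s(D) - P_s(D') \in N^1(X)_{\mathbb{R}}$, we deduce $\alpha \cdot A_1 \cdots A_{n-1} = 0$ for every $(n-1)$-tuple of very ample classes $A_i$. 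Polarization in a basis of ample classes extends this to $\alpha \cdot B_1 \cdots B_{n-1} = 0$ for all $B_i \in N^1(X)_{\mathbb{R}}$, and specializing to $B_2 = \cdots = B_{n-1} = H$ ample gives both $\alpha \cdot H^{n-1} = 0$ and $\alpha^2 \cdot H^{n-2} = 0$; the Hodge-type inequality on a smooth projective variety then forces $\alpha \equiv 0$.

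The main technical obstacle lies in the consistency of the admissible flag in the Iitaka case: the flag $Y_\bullet$ must simultaneously serve as a Nakayama flag for both $D$ and $D'$, with $Y_{n-1}$ a complete intersection of very ample divisors that lifts to a curve $C'$ on a single birational model hosting the common Iitaka fibration $\phi$, so that the degree factor $\deg(C' \to \phi(C'))$ appearing in Lemma \ref{curveintersection} is genuinely shared by $D$ and $D'$ rather than depending on which Iitaka fibration representative is chosen. Verifying this compatibility, together with the polarization/Hodge step at the end, is where the bookkeeping will concentrate.
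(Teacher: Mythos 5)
Your proof is correct and follows the paper's overall strategy: recover $N_s$ from Lemma \ref{negpartviaval}, invoke Theorem \ref{sameiitaka} to identify a common Iitaka fibration, and use Lemma \ref{curveintersection} to compare intersection numbers of the positive parts against complete-intersection curves. Where you diverge is the final step. The paper cites \cite[Lemma 3.5]{Jow}, which asserts that one can choose curves $C_1,\ldots,C_\rho$ of the required complete-intersection type forming a basis of $N_1(X)_{\Q}$, so the equalities $P_s(D)\cdot C_i=P_s(D')\cdot C_i$ directly give $P_s(D)\equiv P_s(D')$. You instead observe that $\alpha\cdot A_1\cdots A_{n-1}=0$ for all very ample classes $A_i$ (intersection numbers depend only on the classes, so it suffices to compute on general members), extend by multilinearity to $\alpha\cdot B_1\cdots B_{n-1}=0$ for all $B_i\in N^1(X)_{\R}$ since very ample classes span $N^1(X)_{\R}$, and then specialize to $\alpha\cdot H^{n-1}=0$ and $\alpha^2\cdot H^{n-2}=0$ and invoke the Hodge index theorem. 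Both routes are valid: the paper's is shorter given that Jow's lemma was tailored precisely to furnish a dual basis of complete-intersection curves, while yours avoids that lemma at the cost of invoking the Hodge--Riemann bilinear relations. Your more explicit treatment of the $\kappa=0$ case (a single-point Okounkov body, together with letting $Y_1$ range over prime divisors, pins down the unique effective representative in $|D|_{\R}$) is also correct; the paper simply dismisses this case as trivial.
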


\begin{proof}
Recall that if $D$ is a divisor with $\kappa(D) \geq 0$, then any $\kappa(D)$-dimensional general subvariety of $X$ is a Nakayama subvariety of $D$. Thus we can take an admissible flag $Y_\bullet$ containing the Nakayama subvarieties of $D,D'$ with general $Y_n$.  By the assumption, we can deduce from Theorem \ref{chpwmain} (1) that $\kappa(D)=\kappa(D')$.
The assertion is trivial when $\kappa(D)=\kappa(D')=0$. Thus, from now on, we assume that $\kappa(D), \kappa(D') > 0$.
By Theorem \ref{sameiitaka}, we may fix an Iitaka fibration $\phi:X'\to Z$ for both $D$ and $D'$.
Let $D=P_s+N_s$ and $D'=P_s'+N_s'$ be the $s$-decompositions. By Lemma \ref{negpartviaval}, we have $N_s=N_s'$. Thus it is sufficient to show that $P_s \equiv P_s'$. By applying \cite[Lemma 3.5]{Jow}, we can take irreducible curves $C_1, \ldots, C_{\rho}$ on $X$ obtained by transversal complete intersections of general effective very ample divisors on $X$ in such a way that they form a basis of $N_1(X)_{\Q}$.
As in Lemma \ref{curveintersection}, for each $1 \leq i \leq \rho$, we can choose an admissible flag
$$
Y_\bullet^i : X=Y_0^i \supseteq Y_1^i \supseteq \cdots \supseteq Y_{n-1}^i \supseteq Y_n^i=\{x^i \}
$$
on $X$ such that $Y_{n-\kappa(D)}^i$ is a Nakayama subvariety of $D$, $Y_{n-1}^i=C_i$, and $x^i$ is a very general point on $C_i$.
For each $1 \leq i \leq \rho$, let $C_i'$ be the strict transform of $C_i$ on $X'$. By Lemma \ref{curveintersection} and the assumption, we have
$$
\begin{array}{rll}
P_s \cdot C_i & =~~\deg( C_i' \to \phi(C_i')) \cdot \vol_{\R^1}(\okval_{Y_\bullet^i}(D)_{x_1=\cdots=x_{n-1}=0}) &\\
   &= ~~ \deg( C_i' \to \phi(C_i')) \cdot \vol_{\R^1}(\okval_{Y_\bullet^i}(D')_{x_1=\cdots=x_{n-1}=0}) &=~~P_s' \cdot C_i
\end{array}
$$
for every $1 \leq i \leq \rho$. Thus $P_s \equiv P_s'$, and this finishes the proof.
\end{proof}

\begin{remark}
In Proposition \ref{same=>numeq}, we do not assume that $D$ or $D'$ is an abundant divisor.
Clearly, Proposition \ref{same=>numeq} does not hold without the assumption that $\kappa(D), \kappa(D') \geq 0$. We have $\kappa(D), \kappa(D') = -\infty$ for any non-pseudoeffective divisors $D$ and $D'$. However, $\okval_{Y_\bullet}(D)=\okval_{Y_\bullet}(D')=\emptyset$ for every admissible flag $Y_\bullet$ on $X$.
\end{remark}

As a consequence of Propositions \ref{numeq=>same} and \ref{same=>numeq}, we obtain Theorem \ref{main1} as Corollary \ref{numeq=same}.

\begin{corollary}\label{numeq=same}
Let $D, D'$ be divisors on $X$ with $\kappa(D), \kappa(D') \geq 0$. If $D$ or $D'$ is an abundant divisor, then $D \equiv D'$ if and only if $\okval_{Y_\bullet}(D)=\okval_{Y_\bullet}(D')$ for every admissible flag $Y_\bullet$ on $X$.
\end{corollary}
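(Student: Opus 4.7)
The plan is to obtain Corollary \ref{numeq=same} as a direct combination of the two Propositions already established in this section, so essentially no new work is required beyond citing them correctly and checking that their hypotheses cover both implications.

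For the forward implication ($D \equiv D'$ implies equality of all valuative Okounkov bodies), I would simply invoke Proposition \ref{numeq=>same}. Its hypotheses are exactly ``$\kappa(D), \kappa(D') \geq 0$'' together with ``$D$ or $D'$ is abundant,'' which match the hypotheses of the corollary; note that by Theorem \ref{abprop}(1), the abundance of one of $D, D'$ together with $D \equiv D'$ forces the other to be abundant as well, which is what the proof of Proposition \ref{numeq=>same} actually uses.

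For the reverse implication (equality of valuative Okounkov bodies implies $D \equiv D'$), I would invoke Proposition \ref{same=>numeq}. Here it is worth emphasizing in the write-up that the abundance hypothesis is \emph{not} needed for this direction at all: Proposition \ref{same=>numeq} gives $D \equiv D'$ under the sole assumption $\kappa(D), \kappa(D') \geq 0$. Thus the abundance assumption in the corollary is used only to rule out the counterexamples to the forward direction noted in \cite[Remark 3.13]{CHPW1}.

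There is no obstacle to speak of: both directions are already done. The only thing to be careful about in the write-up is clarity about which implication uses the abundance hypothesis and which does not, so that the reader sees that the statement is genuinely an ``if and only if'' even though abundance plays an asymmetric role. I would phrase the proof as a single short paragraph: ``The only if direction follows from Proposition \ref{numeq=>same}. The if direction follows from Proposition \ref{same=>numeq}, for which no abundance assumption is needed.''
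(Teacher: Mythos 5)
Your proposal is correct and matches the paper's proof exactly: the paper likewise deduces the corollary directly from Propositions \ref{numeq=>same} and \ref{same=>numeq}, with the abundance hypothesis used only in the forward direction. Your added remark clarifying the asymmetric role of abundance is accurate and consistent with the paper's surrounding remarks.
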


\begin{proof}
The assertion follows from Propositions \ref{numeq=>same} and \ref{same=>numeq}.
\end{proof}

Finally, we prove the following.

\begin{corollary}\label{lineq=same}
Let $D, D'$ be divisors on $X$ with $\kappa(D), \kappa(D') \geq 0$. If $\Pic(X)$ is finitely generated, then $D \sim_{\R} D'$ if and only if $\okval_{Y_\bullet}(D)=\okval_{Y_\bullet}(D')$ for every admissible flag $Y_\bullet$ on $X$.
\end{corollary}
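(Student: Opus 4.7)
The plan is to treat the two implications separately and lean on Proposition \ref{same=>numeq} plus a standard Picard-group fact.

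For the ``only if'' direction, suppose $D \sim_{\R} D'$. Then $|D|_{\R} = |D'|_{\R}$ as sets of effective $\R$-divisors, so $\nu_{Y_\bullet}(|D|_{\R}) = \nu_{Y_\bullet}(|D'|_{\R})$ for every admissible flag $Y_\bullet$, and taking the closure of the convex hull yields $\okval_{Y_\bullet}(D) = \okval_{Y_\bullet}(D')$ directly from Definition \ref{def-vallim}.

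For the ``if'' direction, first apply Proposition \ref{same=>numeq}, which already gives $D \equiv D'$ under the assumption $\kappa(D),\kappa(D') \geq 0$ without any abundance hypothesis. It then remains to upgrade numerical equivalence to $\R$-linear equivalence using the assumption that $\Pic(X)$ is finitely generated. The key observation is that $\Pic^0(X)$ is an abelian variety, hence a divisible abelian group; being simultaneously a subgroup of the finitely generated group $\Pic(X)$, it must be trivial. Therefore $\Pic(X)$ coincides with the N\'eron-Severi group $NS(X)$, and after tensoring with $\R$ we obtain $\Pic(X) \otimes_{\Z} \R = N^1(X)_{\R}$, i.e., $\R$-linear equivalence and numerical equivalence agree on $\R$-Cartier divisors. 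Consequently $D \equiv D'$ forces $D \sim_{\R} D'$.

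There is essentially no obstacle here, since Proposition \ref{same=>numeq} already carries the analytic content of the statement; the only thing to verify is the algebraic fact that, under a finitely generated Picard group, numerical and $\R$-linear equivalence coincide for $\R$-divisors. The argument will therefore amount to a short combination of Proposition \ref{same=>numeq} and the triviality of $\Pic^0(X)$.
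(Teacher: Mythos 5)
Your proof is correct and follows essentially the same route as the paper: the ``only if'' direction is immediate from the definition of $\okval_{Y_\bullet}$, and the ``if'' direction combines Proposition \ref{same=>numeq} with the identification of numerical and $\R$-linear equivalence under the finitely-generated Picard hypothesis. The paper states the latter identification without justification, whereas you correctly supply it (triviality of the divisible subgroup $\Pic^0(X)$ inside the finitely generated $\Pic(X)$, then killing torsion by tensoring with $\R$); that added detail is welcome but does not change the argument.
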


\begin{proof}
The `only if' direction is trivial by definition (see also \cite[Proposition 3.13]{CHPW1}). For the converse, note that $D \equiv D'$ if and only if $D \sim_{\R} D'$ under the assumption that $\Pic(X)$ is finitely generated. Then the `if' direction follows from Proposition \ref{same=>numeq}.
\end{proof}

\section{Restricted base locus via Okounkov bodies}\label{b-sec}

We show Theorem \ref{main2} as Theorem \ref{b-thm} in this section. The idea of the proof is essentially the same as that of \cite[Theorem A]{CHPW2}, but we include the detailed proof for reader's convenience. Throughout the section, $X$ is a smooth projective variety of dimension $n$.

\begin{theorem}\label{b-thm}
Let $D$ be a pseudoeffective abundant divisor on $X$, and $x \in X$ be a point. Then the following are equivalent:
\begin{enumerate} [leftmargin=0cm,itemindent=.6cm]
 \item[$(1)$] $x \in \bm(D)$
 \item[$(2)$] $\okval_{Y_\bullet}(D)$ does not contain the origin of $\R^n$ for every admissible flag $Y_\bullet$ on $X$ centered at $x$.
  \item[$(3)$] $\okval_{Y_\bullet}(D)$ does not contain the origin of $\R^n$ for some admissible flag $Y_\bullet$ on $X$ centered at $x$.
\end{enumerate}
\end{theorem}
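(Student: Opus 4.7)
The plan is to prove $(1) \Rightarrow (3) \Rightarrow (2) \Rightarrow (1)$; the implication $(3) \Rightarrow (2)$ is immediate. For $(1) \Rightarrow (3)$, I will use the inclusion $\okval_{Y_\bullet}(D) \subseteq \oklim_{Y_\bullet}(D)$ together with the limiting analogue \cite[Theorem A]{CHPW2}: if $x \in \bm(D)$, then the origin already fails to lie in $\oklim_{Y_\bullet}(D)$ for every admissible flag centered at $x$, and a fortiori it fails to lie in $\okval_{Y_\bullet}(D)$.

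The substantive direction is $(2) \Rightarrow (1)$, which I prove by contrapositive: assuming $x \notin \bm(D)$ and fixing an arbitrary admissible flag $Y_\bullet$ centered at $x$, I will exhibit effective divisors $E_m \sim_\R D$ with $\nu_{Y_\bullet}(E_m) \to \mathbf 0$, placing the origin in $\okval_{Y_\bullet}(D)$. The strategy is to reduce to a big divisor on the Iitaka base via abundance. Using Lemma \ref{okbdbir} I pass to a birational model $\mu \colon W \to X$ isomorphic over a neighborhood of $x$, and by Theorem \ref{abprop}(3) I arrange a contraction $g \colon W \to T$ with $P_\sigma(\mu^*D) \sim_\Q P_\sigma(g^*B)$ for some big $\R$-divisor $B$ on $T$. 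Since $\mu^*D$ remains abundant, Theorem \ref{abprop}(2) gives $P_s(\mu^*D) = P_\sigma(\mu^*D)$, and the hypothesis $x \notin \bm(D)$ forces $\tilde x := \mu^{-1}(x)$ to avoid $\Supp N_\sigma(\mu^*D)$. Combining Lemma \ref{okbdzd} with these observations reduces the task to placing the origin in $\okval_{\tilde Y_\bullet}(P_\sigma(g^*B))$, where $\tilde Y_\bullet$ is the strict transform of $Y_\bullet$ on $W$.

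To produce effective approximations to $P_\sigma(g^*B)$ with small valuation, I will first verify $g(\tilde x) \notin \bp(B)$ by tracking the augmented base locus through the Iitaka fibration: any divisorial component of $\bp(B)$ meeting $g(\tilde x)$ would pull back to a vertical divisorial component of $\bm(\mu^*D)$ through $\tilde x$, contradicting $x \notin \bm(D)$. Bigness of $B$ and the Lazarsfeld-Musta\c{t}\u{a} criterion \cite[Proposition 4.1]{lm-nobody} then yield effective $B_m \sim_\R B$ on $T$ with $\nu_{\overline Y_\bullet}(B_m) \to \mathbf 0$ for any admissible flag $\overline Y_\bullet$ on $T$ centered at $g(\tilde x)$. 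Since $\ord_E(g^*B_m) \geq \ord_E(\|g^*B\|)$ for every prime divisor $E \subset W$, the divisor $E_m := g^*B_m - N_\sigma(g^*B)$ is an effective representative of $P_\sigma(g^*B)$.

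The main obstacle will be showing $\nu_{\tilde Y_\bullet}(E_m) \to \mathbf 0$, because the flag $\tilde Y_\bullet$ need not project under $g$ to a flag on $T$. To overcome this, I will apply Lemma \ref{okbdbir} once more to pass to yet another birational model on which the subvarieties of $\tilde Y_\bullet$ become compatible with $g$---the horizontal subvarieties contributing zero to the pullback valuation, the vertical subvarieties projecting into a chosen flag $\overline Y_\bullet$ on $T$ so that their contribution is controlled by $\nu_{\overline Y_\bullet}(B_m)$. The correction from $N_\sigma(g^*B)$ is harmless because its support avoids $\tilde x$ by the same vertical-divisor analysis used to establish $g(\tilde x) \notin \bp(B)$. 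Assembling these controls places the origin in $\okval_{\tilde Y_\bullet}(P_\sigma(g^*B))$, completing the argument.
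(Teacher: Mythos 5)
Your $(1) \Rightarrow (3)$ argument, deducing the claim from the inclusion $\okval_{Y_\bullet}(D) \subseteq \oklim_{Y_\bullet}(D)$ together with the limiting-body result \cite[Theorem A]{CHPW2}, is correct and is actually cleaner than the paper's own direct case-by-case valuation computation; it also makes transparent that this direction does not need abundance.

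For $(2) \Rightarrow (1)$, however, your route through Lehmann's Theorem~\ref{abprop}(3) is both over-engineered and gapped. First, you need the birational morphism $\mu \colon W \to X$ to be an isomorphism over a neighborhood of $x$ so that Lemma~\ref{okbdbir} applies, but $W$ must resolve the indeterminacy of the Iitaka map, and that indeterminacy is contained in $\SB(D)$, which can strictly contain $\bm(D)$. From $x \notin \bm(D)$ alone you cannot conclude that $\mu$ can be taken isomorphic near $x$ --- the point $x$ could lie in $\SB(D)\setminus\bm(D)$ --- so the very first reduction is not available. Second, the proposed fix for the incompatibility of the flag $\tilde{Y}_\bullet$ with $g$, namely passing to yet another model where the subvarieties split into horizontal and vertical pieces, is not substantiated: an admissible flag is a nested chain of subvarieties of every codimension, an arbitrary flag centered at $x$ bears no relation to the fibers of $g$, and no concrete mechanism is given for bounding $\nu_{\tilde{Y}_\bullet}(E_m)$ by $\nu_{\overline{Y}_\bullet}(B_m)$. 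The paper sidesteps all of this by using Theorem~\ref{abprop}(2) directly: if $x \notin \bm(D)$, then every $Y_i$ contains $x$, hence $Y_i \not\subseteq \bm(D)$ and $\ord_{Y_i}(||D||) = 0$; abundance upgrades this numerical infimum to an infimum over $|D|_{\R}$, so for any $\eps > 0$ there is $D' \in |D|_{\R}$ with $\ord_{Y_i}(D') < \eps$ for all $i$, placing the origin in $\okval_{Y_\bullet}(D)$ immediately. This is the single place abundance is genuinely needed, and you should replace the Iitaka-fibration detour with this one-step argument.
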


\begin{proof}
We may assume that $D$ is effective. Since $D$ is an abundant divisor, we have $\ord_{V}(||D||)=\inf\{ \sigma(D') \mid D \sim_{\R} D' \geq 0 \}$ by Theorem \ref{abprop} (2) for any  divisorial valuation $\sigma$ with the center $V$ on $X$.

\smallskip

\noindent $(1) \Rightarrow (2)$ Assume that $x \in \bm(D)$, and fix an admissible flag $Y_\bullet$ centered at $x$.
By taking a sufficiently small ample divisor $A$, we may assume that $x\in\bm(D+A)$.
By \cite[Theorem B]{elmnp-asymptotic inv of base}, we have  $\ord_x(||D+A||) > 0$. Thus it follows that
$$
\delta:=\inf\{ \mult_x(D') \mid D \sim_{\R} D' \geq 0 \} = \ord_x(||D||) \geq \ord_x(||D+A||) > 0.
$$
For $D' \in |D|_{\R}$, we write $\nu_{Y_\bullet}(D')=(\nu_1(D'), \ldots, \nu_n(D'))$. Then we obtain
$$
\nu_1(D') + \cdots + \nu_n(D') \geq \mult_x(D') \geq \delta.
$$
This implies that for any point $\mathbf{x}=(x_1, \ldots, x_n) \in \okval_{Y_\bullet}(D)$, we have $x_1 + \cdots + x_n \geq \delta$. In particular, $\okval_{Y_\bullet}(D)$ does not contain the origin of $\R^n$.

\smallskip

\noindent $(2) \Rightarrow (3)$ Trivial.

\smallskip

\noindent$(3) \Rightarrow (1)$ Assume that $x \not\in \bm(D)$, and fix an arbitrary admissible flag $Y_\bullet$ centered at $x$. By Remark \ref{smflag}, we may assume that each $Y_i$ in $Y_\bullet$ is smooth. We use the notations in Subsection \ref{altsubsec2}. We may take a birational morphism $f_m \colon X_m \to X$ for each sufficiently large integer $m >0$ in such a way that there is an admissible flag $Y_\bullet^m$ on $X_m$ such that $f_m|_{Y_i^m} \colon Y_i^m \to Y_i$ is a birational morphism for $0 \leq i \leq n$.
We can write $f_m^*D=M_m + N_m + (P_m-M_m)$. Note that $Y_n^m \not\subseteq \Supp(N_m)$ and $M_m$ is semiample. Thus there is an effective divisor $D'_m \sim_{\R} D$ such that
$$
\nu_{Y_\bullet^m}(D'_m) = \nu_{Y_\bullet^m}(P_m-M_m).
$$
Now, by Lemma \ref{fujita}, there is an ample divisor $H$ on $X$ such that $P_m - M_m \leq \frac{1}{m}f_m^*H$. We then have
$$
\nu_{Y_\bullet^m}(D'_m) + \mathbf{x} = \nu_{Y_\bullet^m}(P_m-M_m) + \mathbf{x} \in \okval_{Y_\bullet^m}\left(\frac{1}{m}f_m^*H \right)~\text{for some $\mathbf{x} \in \R^n_{\geq 0}$}.
$$
In view of Lemma \ref{valbimor}, we see that
$$
\nu_{Y_\bullet}(D'_m) + \mathbf{x}' \in \okval_{Y_\bullet}\left(\frac{1}{m}H \right)~\text{for some $\mathbf{x}' \in \R^n_{\geq 0}$}.
$$
However, since $\displaystyle \lim_{m \to \infty}\okval_{Y_\bullet}\left(\frac{1}{m}H \right) = \{ 0 \}$, it follows that $\displaystyle \lim_{m \to \infty}\nu_{Y_\bullet}(D'_m)=0$. This means that the origin of $\R^n$ is contained in $\okval_{Y_\bullet}(D)$. We have shown that $(3) \Rightarrow (1)$.
\end{proof}

\begin{corollary}\label{nefcrit}
Let $D$ be an abundant divisor on $X$. Then the following are equivalent:
\begin{enumerate} [leftmargin=0cm,itemindent=.6cm]
 \item[$(1)$] $D$ is nef.
 \item[$(2)$] For every point $x \in X$, there exists an admissible flag $Y_\bullet$ on $X$ centered at $x$ such that $\okval_{Y_\bullet}(D)$ contain the origin of $\R^n$.
  \item[$(3)$] $\okval_{Y_\bullet}(D)$ contain the origin of $\R^n$ for every admissible flag $Y_\bullet$ on $X$.
\end{enumerate}
\end{corollary}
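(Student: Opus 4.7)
The plan is to deduce this immediately from Theorem \ref{b-thm} together with the standard characterization that a divisor $D$ on $X$ is nef if and only if $\bm(D)=\emptyset$ (already recalled in Subsection \ref{asysubsec}). The idea is simply to quantify the pointwise equivalence of Theorem \ref{b-thm} over all $x \in X$, treating "contains the origin" as the logical negation of "does not contain the origin".

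For $(1) \Rightarrow (3)$: assume $D$ is nef, so $\bm(D)=\emptyset$. Fix an arbitrary admissible flag $Y_\bullet$ on $X$ and let $x=Y_n$ be its center. Since $x \notin \bm(D)$, the implication $(2) \Rightarrow (1)$ of Theorem \ref{b-thm} (in contrapositive form) forces $\okval_{Y_\bullet}(D)$ to contain the origin of $\R^n$. As $Y_\bullet$ was arbitrary, (3) holds. The implication $(3) \Rightarrow (2)$ is immediate since any point admits at least one admissible flag centered at it (for instance, a flag of smooth general complete intersections through $x$).

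For $(2) \Rightarrow (1)$: suppose that for every $x \in X$ there is an admissible flag $Y_\bullet$ centered at $x$ such that $\okval_{Y_\bullet}(D)$ contains the origin. By the contrapositive of the implication $(1) \Rightarrow (2)$ in Theorem \ref{b-thm}, we conclude $x \notin \bm(D)$. Since $x$ was arbitrary, $\bm(D)=\emptyset$, which means $D$ is nef.

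There is essentially no obstacle here; the only technical point worth flagging is that the three conditions in Theorem \ref{b-thm} involve flags \emph{centered at a fixed $x$}, whereas condition (3) of the corollary ranges over \emph{all} admissible flags on $X$. This discrepancy is harmless because every admissible flag is centered at its terminal point $Y_n$, so the universal quantifier over flags in (3) is the same as the double quantifier "for every $x \in X$ and every admissible flag centered at $x$". Consequently the corollary is a direct rewriting of Theorem \ref{b-thm} combined with $\bm(D)=\emptyset \Leftrightarrow D$ nef.
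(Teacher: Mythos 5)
Your overall approach is exactly the paper's: the corollary is deduced directly from Theorem \ref{b-thm} together with the fact that $D$ is nef if and only if $\bm(D)=\emptyset$, and indeed the paper simply asserts this is ``immediate.'' One small logical slip, however, appears in your argument for $(2)\Rightarrow(1)$: you invoke the contrapositive of the implication $(1)\Rightarrow(2)$ of Theorem \ref{b-thm}. That contrapositive says ``if \emph{every} admissible flag centered at $x$ gives an Okounkov body containing the origin, then $x\notin\bm(D)$,'' whereas hypothesis $(2)$ of the corollary only supplies \emph{one} such flag at each $x$. What you actually need is the contrapositive of $(1)\Rightarrow(3)$ of Theorem \ref{b-thm}: ``if there exists an admissible flag centered at $x$ whose Okounkov body contains the origin, then $x\notin\bm(D)$.'' Since $(1)$, $(2)$, and $(3)$ of the theorem are all equivalent, this is a harmless misattribution and the argument is easily repaired, but as written the cited implication does not justify the step.
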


\begin{proof}
Recall that a divisor $D$ on $X$ is nef if and only if $\bm(D)=\emptyset$. Thus the corollary is immediate from Theorem \ref{b-thm}.
\end{proof}

\begin{remark}\label{b-rmk}
Note that Theorem \ref{b-thm} and Corollary \ref{nefcrit} may not hold when $D$ is not abundant (see \cite[Remark 4.10]{CPW1}). The main reason is that for a divisorial valuation $\sigma$ with the center $V$ on $X$, we may have $\ord_V(||D||) \neq \inf\{ \sigma(D') \mid D \sim_{\R} D' \geq 0 \}$ in contrast to the abundant divisor case (Theorem \ref{abprop} (2)).
\end{remark}

\section{Comparing two Okounkov bodies}\label{compsec}

In this section, we prove Theorem \ref{main3} as Theorem \ref{comvallim}.

\begin{theorem}\label{comvallim}
Let $D$ be a pseudoeffective abundant divisor on an $n$-dimensional smooth projective variety $X$ with $\kappa(D)>0$. Fix an admissible flag $Y_\bullet$ on $X$ such that $V:=Y_{n-\kappa(D)}$ is a Nakayama subvariety of $D$ and $Y_n$ is a general point in $X$.
Consider the Iitaka fibration $\phi \colon X' \to Z$ of $D$ and the strict transform $V'$ of $V$ on $X'$. Then we have
$$
\vol_{\R^{\kappa(D)}}(\oklim_{Y_\bullet}(D))=\deg(\phi|_{V'} \colon V' \to Z) \cdot \vol_{\R^{\kappa(D)}}(\okval_{Y_\bullet}(D)).
$$
In particular, $\okval_{Y_\bullet}(D)=\oklim_{Y_\bullet}(D)$ if and only if the map $\phi|_{V'} \colon V' \to Z$ is generically injective.
\end{theorem}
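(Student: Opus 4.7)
The plan is to reduce the claimed identity to an assertion about restricted volumes, and then to verify that assertion through the Fujita-type approximations from Section~\ref{altsec}.

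First, since $D$ is abundant, Proposition~\ref{nak=psv} shows that the Nakayama subvariety $V$ is simultaneously a positive volume subvariety of $D$. Theorem~\ref{chpwmain} therefore applies to both Okounkov bodies with respect to $Y_\bullet$, giving
$$
\vol_{\R^{\kappa(D)}}(\okval_{Y_\bullet}(D)) = \frac{1}{\kappa(D)!}\vol_{X|V}(D), \qquad
\vol_{\R^{\kappa(D)}}(\oklim_{Y_\bullet}(D)) = \frac{1}{\kappa(D)!}\vol_{X|V}^+(D).
$$
Thus the theorem reduces to the identity $\vol_{X|V}^+(D) = \deg(\phi|_{V'}) \cdot \vol_{X|V}(D)$.

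Next, I apply the Fujita-type approximations. Retaining the notation of Subsections~\ref{altsubsec1} and \ref{altsubsec2}, Lemma~\ref{altconval} gives $\vol_{\R^{\kappa(D)}}(\okval_{Y_\bullet}(D)) = \lim_{m \to \infty} H_m^{\kappa(D)}/\kappa(D)!$, while Lemma~\ref{altconlim} gives $\vol_{\R^{\kappa(D)}}(\oklim_{Y_\bullet}(D)) = \lim_{m \to \infty} (M_m|_{V_m})^{\kappa(D)}/\kappa(D)!$. Since $M_m = \phi_m^* H_m$, and since $\phi_m|_{V_m} \colon V_m \to Z_m$ is a surjective generically finite morphism of $\kappa(D)$-dimensional projective varieties (dominance follows from $V$ being a Nakayama subvariety, so that sections in $|\lfloor mD\rfloor|$ remain linearly independent after restriction to $V$), the projection formula yields
$$
(M_m|_{V_m})^{\kappa(D)} = \bigl((\phi_m|_{V_m})^* H_m\bigr)^{\kappa(D)} = \deg(\phi_m|_{V_m}) \cdot H_m^{\kappa(D)}.
$$

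It remains to show that $\deg(\phi_m|_{V_m}) = \deg(\phi|_{V'})$ for all sufficiently large $m$, and this is where I expect the main difficulty to lie. For large $m$, the morphism $\phi_m \colon X_m \to Z_m$ is birationally equivalent to the Iitaka fibration $\phi \colon X' \to Z$. Passing to a common smooth birational model $\widetilde{X}$ dominating both $X_m$ and $X'$, together with a common model $\widetilde{Z}$ birational to both $Z_m$ and $Z$ and compatible with the fibration maps, the strict transform $\widetilde{V}$ of $V$ on $\widetilde{X}$ is birational to both $V_m$ and $V'$. Since the degree of a dominant generically finite morphism is a birational invariant, the two degrees coincide, and combining with the previous step yields the desired volume identity.

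Finally, for the ``in particular'' clause, observe that $\vol_{\R^{\kappa(D)}}(\okval_{Y_\bullet}(D)) > 0$ because $V$ is a Nakayama subvariety with $\dim V = \kappa(D) > 0$. If $\phi|_{V'}$ is generically injective then $\deg(\phi|_{V'}) = 1$ and the two volumes are equal; since $\okval_{Y_\bullet}(D) \subseteq \oklim_{Y_\bullet}(D)$ are convex bodies of the same dimension $\kappa(D)$, equality of volumes forces them to coincide. Conversely, if the two Okounkov bodies are equal then so are their volumes, which forces $\deg(\phi|_{V'}) = 1$.
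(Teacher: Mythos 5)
Your proof is correct and follows essentially the same route as the paper's: via Proposition~\ref{nak=psv}, Lemmas~\ref{altconval} and~\ref{altconlim}, the relation $M_m=\phi_m^*H_m$, and the projection formula. Two small remarks. First, the intermediate reduction to the restricted-volume identity $\vol^+_{X|V}(D)=\deg(\phi|_{V'})\cdot\vol_{X|V}(D)$ is never actually used; after writing it down you immediately pass to the Okounkov body volumes via the Fujita approximations, which is exactly what the paper does directly, so this step can be dropped. Second, you fill in two points the paper takes as given, namely the birational invariance argument for $\deg(\phi_m|_{V_m})=\deg(\phi|_{V'})$ (which the paper simply asserts for $m\gg 0$) and the convexity argument that, for convex bodies $\okval_{Y_\bullet}(D)\subseteq\oklim_{Y_\bullet}(D)$ of the same full dimension $\kappa(D)>0$, equality of Euclidean volumes forces equality of the bodies; both are sound and harmlessly more explicit than the paper's text.
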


\begin{proof}
We use the notations in Section \ref{altsec}. By Proposition \ref{nak=psv}, $V$ is also a positive volume subvariety of $D$.
For a sufficiently large integer $m>0$, we have
$$
\deg(\phi_m|_{V_m} \colon V_m \to Z_m)=\deg(\phi|_{V'} \colon V' \to Z)=:d.
$$
Since $\phi_m|_{V_m}^*H_m=M_m$, it follows that $\vol_{V_m}(M_m|_{V_m})=d\cdot\vol_{Z_m}(H_m)$. By Lemmas \ref{altconval}, \ref{altconlim}, Theorem \ref{chpwmain}, and \cite[Theorem A]{lm-nobody}, we obtain
\begin{footnotesize}
$$
\vol_{\R^{\kappa(D)}}(\oklim_{Y_\bullet}(D)) =\lim_{m \to \infty} \frac{1}{\kappa(D)!}\vol_{V_m}(M_m|_{V_m}) \text{ and }
\vol_{\R^{\kappa(D)}}(\okval_{Y_\bullet}(D)) = \lim_{m \to \infty} \frac{1}{\kappa(D)!}\vol_{Z_m}(H_m).
$$
\end{footnotesize}\\[-13pt]
Thus the first assertion immediately follows.

Recall that  $\okval_{Y_\bullet}(D) \subseteq \oklim_{Y_\bullet}(D)$. Thus
$$
\text{$\okval_{Y_\bullet}(D)=\oklim_{Y_\bullet}(D)$ if and only if  $\vol_{\R^{\kappa(D)}}(\okval_{Y_\bullet}(D))=\vol_{\R^{\kappa(D)}}(\oklim_{Y_\bullet}(D))$.}
$$
Now the second assertion follows from the first assertion.
\end{proof}

\begin{example}
Upon obtaining Theorem \ref{comvallim}, one may wonder whether under the same settings, $\oklim_{Y_\bullet}(D)$ and $\okval_{Y_\bullet}(D)$ coincide up to rescaling by a constant, i.e.,
$$
\oklim_{Y_\bullet}(D)=(\deg(\phi|_{V'} \colon V' \to Z))^{\frac{1}{\kappa(D)}}\cdot\okval_{Y_\bullet}(D).
$$
This is not true in general.
For instance, consider a 3-fold $X:=\P^2 \times \P^1$ with the projections $f \colon X \to \P^2$ and $g \colon X \to \P^1$. Let $H:=f^*L$ and $F:=g^*P$ where $L$ is a line in $\P^2$ and $P$ is a point in $\P^1$. Then $H$ is an abundant divisor with $\kappa(H)=2$.
Note that $f$ is the Iitaka fibration of $H$.
Take a general point $x$ in $X$ and general members $H' \in |H|$ and $S \in |H+2F|$ containing $x$. Note that $S$ is a Nakayama subvariety of $H$ and $\deg(f|_S \colon S \to \P^2)=2$.
We now fix an admissible flag
$$
Y_\bullet : X \supseteq S \supseteq S \cap H' \supseteq \{ x \}
$$
on $X$. It is easy to check that $\okval_{Y_\bullet}(H)$ is an isosceles right triangle in $\{0\} \times \R_{\geq 0}^2$
and $\oklim_{Y_\bullet}(H)$ is a non-isosceles right triangle in $\{0\} \times \R_{\geq 0}^2$.
In particular, we see that $\oklim_{Y_\bullet}(H) \neq \sqrt{2} \cdot  \okval_{Y_\bullet}(H)$.
\end{example}

\begin{example}\label{nonexist}
We see an example of a variety with a pseudoeffective abundant divisor which does not have any Nakayama subvariety $V$ giving rise to a generically injective map $\phi|_{V'} \colon V' \to Z$ (i.e., $\deg \phi|_{V'})=1$) as in Theorem \ref{comvallim}.
Let $S$ be a minimal surface with $\kappa(S)=1$.
Then $K_S$ is semiample, and $\kappa(K_S)=\nu_{\BDPP}(K_S)=1$.
Denote by $\pi \colon S \to C$ the relatively minimal elliptic fibration induced by $|mK_S|$ for $m \gg 0$. Note that $\pi$ is the Iitaka fibration of $K_S$.
Suppose now that $\pi$ has no section. For instance, if $S$ is a Delgachev surface, then $\pi$ has multiple fibers so that $\pi$ has no section. Then, for any Nakayama subvariety $V$ of $K_S$, the map $\pi|_{V} \colon V \to C$ is not generically injective. In particular, by Theorem \ref{comvallim}, $\okval_{Y_\bullet}(K_S)$ and $\oklim_{Y_\bullet}(K_S)$ are different for any admissible flag $Y_\bullet$ on $S$ containing a Nakayama subvariety of $K_S$ such that $Y_2$ is a general point in $S$.
\end{example}


\end{document}